\DeclareMathOperator{\End}{End} 
\DeclareMathOperator{\Ext}{Ext}
\DeclareMathOperator{\add}{add}
\DeclareMathOperator{\Hom}{Hom} 
\DeclareMathOperator{\rad}{rad} 
\DeclareMathOperator{\inj}{inj} 
\DeclareMathOperator{\soc}{soc}
\DeclareMathOperator{\id}{id} 
\DeclareMathOperator{\Lie}{Lie} 
\DeclareMathOperator{\rep}{rep}
\DeclareMathOperator{\GL}{GL}
\DeclareMathOperator{\codim}{codim} 
\DeclareMathOperator{\Hilb}{Hilb} 
\definecolor{lblue}{rgb}{0.3,0.0,4.4} 
\newcommand*{\punkte}{\dots\unkern}
\newcolumntype{C}[1]{>{\centering\arraybackslash}p{#1}}
\newcommand{\A}{\mathcal{A}} 
\newcommand{\Fa}{\mathcal{F}} 
\newcommand{\Ha}{\mathcal{H}} 
\newcommand{\N}{\mathcal{N}} 
\newcommand{\Orb}{\mathcal{O}} 
\newcommand{\Q}{\mathcal{Q}}
\newcommand{\bN}{\mathbb N}
\newcommand{\dimv}{\underline{\dim}}
\newcommand{\dff}{\underline{d}}
\newcommand{\dfp}{\underline{d}_{P}} 
\newcommand{\df}{\underline{d}} 
\newcommand{\pp}{\mathfrak{p}}
\newcommand{\gl}{\mathfrak{gl}}
\newcommand{\NN}{\mathcal{N}}
\newcommand{\bv}{\underline{b}}
\newcommand{\bfp}{\underline{b}_P} 
\newcommand{\Ker}{\ker}
\newcommand{\bolda}{\boldsymbol{\lambda}} 
\newcommand{\boldmu}{\boldsymbol{\mu}} 
\newcommand{\Hil}[1]{\Hilb_0^{[#1]}(\mathbb{A}^2)}
\definecolor{lightblue}{rgb}{0.6,0.6,0.7}
\definecolor{llllightblue}{rgb}{0.75,0.75,0.85}
\newtheorem{theorem}{Theorem}[section]
\newtheorem*{Mtheorem}{Main Theorem}
\newtheorem{lemma}[theorem]{Lemma}
\newtheorem{proposition}[theorem]{Proposition}
\newtheorem{corollary}[theorem]{Corollary}
\newtheorem{remark}[theorem]{Remark}
\newtheorem{example}[theorem]{Example}
\newtheorem{claim}[theorem]{Claim}
\newtheorem*{thm:fin_cases}{Theorem} 
\newtheorem*{thm:class_max}{Theorem}
\newtheorem*{prop:class_inf}{Proposition} 
\theoremstyle{definition}
\newtheorem{definition}[theorem]{Definition}
\newtheorem{tools}[theorem]{Tools}
\author{Magdalena Boos \thanks{Ruhr-Universit\"at Bochum, Faculty of Mathematics,  D - 44780 Bochum, Germany. Magdalena.Boos-math@ruhr-uni-bochum.de}~~ and~ Michael Bulois \thanks{Univ Lyon, Universit\'{e} Jean Monnet, CNRS UMR 5208, Institut Camille Jordan, Maison de l'Uni-versit\'{e}, 10 rue Tr\'{e}filerie, CS 82301, 42023 Saint-Etienne Cedex 2, France. michael.bulois@univ-st-etienne.fr}}
\affil{}
\long\def\nnfoottext#1{\insert\footins{\footnotesize
    \interlinepenalty\interfootnotelinepenalty
    \splittopskip\footnotesep
    \splitmaxdepth \dp\strutbox \floatingpenalty \@MM
    \hsize\columnwidth \@parboxrestore
   \edef\@thefnmark{}
   \edef\@currentlabel{}\@makefntext
    {\rule{\z@}{\footnotesep}\ignorespaces
      #1\strut}}}
\begin{document}
\makeatletter
\let\@fnsymbol\@arabic
\makeatother
\title{\bf Parabolic Conjugation and Commuting Varieties}

\nnfoottext{Keywords: parabolic subalgebra, nilpotent cone, modality, quiver with relations, covering theory, Delta-filtered modules

AMS Classification 2010: 16G20, 14R20, 14C05, 17B08}

\date{}
\maketitle

\begin{abstract}
We consider the conjugation-action of an arbitrary upper-block parabolic subgroup of the general linear group on the variety of nilpotent matrices in its Lie algebra. Lie-theoretically, it is natural to wonder about the number of orbits of this action. We translate the setup to a representation-theoretic one and obtain a finiteness criterion which classifies all actions with only a finite number of orbits over an arbitrary infinite field. These results are applied to commuting varieties and nested punctual Hilbert schemes. 

\end{abstract}
\tableofcontents
\section{Introduction}\label{intro}
The Lie-theoretical question whether an action of an algebraic group on an affine variety admits only finitely many orbits, is a very natural and basic one. For instance, the conjugation-action of the general linear group $\GL_n$ on all square-sized nilpotent matrices is finite in this way and representatives of the orbits are given by so-called Jordan normal forms \cite{Jo1}.

Many further actions have been examined in detail, some involving a parabolic subgroup $P$ of $\GL_n$. For example, the action of $P$ on the nilradical $\mathfrak{n}_{\pp}$ of its Lie algebra $\pp$ \cite{HiRoe}; or on varieties of nilpotent matrices of a certain nilpotency degree (and, in particular, on the nilpotent cone $\N$ of $\GL_n$) \cite{B2}.

Let $K$ be an arbitrary infinite field. In this work, we fix an upper-block parabolic subgroup $P$ of $\GL_n(K)$ of block sizes $\bfp:=(b_1,\punkte,b_p)$ which acts on its Lie algebra $\pp$ and on the irreducible affine variety $\N_\pp:=\pp\cap\N$ via conjugation.

The main aim of this article is to prove Theorem \ref{thm:fin_cases} and Proposition \ref{prop:class_inf} which classify all parabolic subgroups $P$ which act with only a finite number of orbits on $\N_{\pp}$. This gives

\begin{Mtheorem}
The parabolic subgroup $P$ acts finitely on $\N_{\pp}$ if and only if its block size vector appears (up to symmetry) in the diagram \ref{app:fin_case_diag}. The complementary cases to diagram \ref{app:fin_case_diag} are displayed in diagram \ref{app:inf_case_diag}. 

In particular, $P$ acts infinitely if $P$ has at least $6$ blocks, if $P$ has at least $3$ blocks of size at least $2$ or if $P$ has at least $2$ blocks of size at least $6$. 
\end{Mtheorem}

We also consider the same questions for the actions of a Levi subgroup $L_P$ of $P$ on the nilpotent cone $\N_{\pp}$ and on the nilradical $\mathfrak{n}_{\pp}$. Answers are given in Section \ref{ssect:Levi_result}.

Unless otherwise specified, we assume that $K$ is algebraically closed. We explain how to drop this hypothesis in Section \ref{sect:field}.

The first step in order to prove our  main theorem is to translate the Lie-theoretic setup to a setup in the representation theory of finite-dimensional algebras in Section \ref{sect:act_quiv}. Thus, we define a quiver with relations and a certain subcategory of its representation category such that the isomorphism classes in this subcategory correspond bijectively to the $P$-orbits in $\N_{\pp}$. One difficulty of this correspondence is that we have to look for the number of isomorphism classes for fixed dimension vectors.

The proof of the main theorem is approached from two directions, then. On the one hand, we use covering techniques \cite{BoGa,Ga3} in Section \ref{sect:covering}. This leads us to the study of a subcategory of representations of an acyclic quiver. Several ad-hoc infinite families of representations of this covering quiver are pointed out which allow us to find the infinite cases of our original problem in Proposition \ref{prop:class_inf}. We advance the theory of representations of our covering quiver, especially via the notion of $\Delta$-filtered representations \cite{DlR}. This yields the partial results of Proposition \ref{prop:reptypeGeneral} and  \ref{prop:classifFDelta}.  

On the other hand, we show that every remaining case is finite in Section \ref{sect:finite_cases}. The main idea is to reduce the problem to only four cases by reduction techniques (Section \ref{ssect:reductions}). These four cases are proved by change-of-basis-methods which make use of the representation-theoretic context and can be visualized nicely by combinatorial data.

Our main motivation for classification results such as our main theorem originates in the study of commuting varieties and Hilbert schemes. There is a well-known connection \cite{Nak} between these two varieties, the latter one being a GIT quotient of the former one. The commuting variety corresponding to the Hilbert scheme of nested (non-reduced) subschemes of the plane supported at $(0,0)$ is the nilpotent commuting variety of a parabolic subalgebra of $\mathfrak{gl}_n$ \cite{BE}. The dimension of the latter variety is controled by the modality of the action of $P$ on $\N_{\pp}$. In particular, when there are finitely many orbits, the dimension of the nilpotent commuting variety (hence, that of the corresponding nested punctual Hilbert scheme) is equal to the dimension of its \emph{principal component}. In Section \ref{sect:hilb_comm}, we discuss this in details and point out how bigger components may arise in the infinite cases.

Our method also provides valuable information in other contexts such as the (whole) commuting variety of a parabolic subalgebra. In this setting, we explain how big components can arise, even when the considered parabolic is maximal in $GL_n$. This is linked with the recent work \cite{GoGo}. Whether such results can be pushed forward to the nested punctual Hilbert scheme of the whole plane is an open question.

\medskip

{\bf Acknowledgments:} The authors would like to thank K. Bongartz for his valuable ideas for approaching and visualizing the proof of the finite case. M. Reineke is being thanked for helpful discussions concerning the methods of this work. We thank J. K\"ulshammer and U. Thiel for debating the possible use of bocs calculation with us.

\section{Theoretical background}\label{sect:theory}
We include some facts about the representation theory of finite-dimensional algebras \cite{ASS}. 
 Let $K$ be an algebraically closed field. For a fixed integer $n\in\textbf{N}$, we denote by $\GL_n\coloneqq\GL_n(K)$ the general linear group  
 and by $\gl_n$ its Lie algebra. 

Let  $\Q$  be a \textit{finite quiver}, that is, a directed graph $\Q=(\Q_0,\Q_1,s,t)$ of finitely many \textit{vertices} $i\in\Q_0$  and finitely many \textit{arrows} $(\alpha\colon s(\alpha)\rightarrow t(\alpha))\in\Q_1$ with \textit{source} map $s: \Q_1\rightarrow \Q_0$ and \textit{target} map $t: \Q_1\rightarrow \Q_0$. A \textit{path} in $\Q$ is defined to be a sequence of arrows $\omega=\alpha_l\punkte\alpha_1$, such that $t(\alpha_{k})=s(\alpha_{k+1})$ for all $k\in\{1,\punkte,l-1\}$; formally we include a path $\varepsilon_i$ of length zero for each $i\in \Q_0$ starting and ending in $i$.
We define the \textit{path algebra} $K\Q$ of $\Q$ to be the $K$-vector space with a basis given by the set of all paths in $\Q$. The multiplication of two paths $\omega= \alpha_l ... \alpha_1$ and $\omega' = \beta_q ... \beta_1$ is defined by 
\begin{center}
 $\omega\cdot\omega'=\left\{\begin{array}{ll}\omega\omega'&~\textrm{if}~t(\beta_q)=s(\alpha_1),\\
0&~\textrm{otherwise,}\end{array}\right.$\end{center}
where $\omega\omega'$ is the  concatenation of paths.

Let $\rad(K\Q)$ be the \textit{path ideal} of $K\Q$ which is the (two-sided) ideal generated by all paths of positive length. An ideal $I\subseteq K\Q$  is called \textit{admissible} if there exists an integer $s$ with $\rad(K\Q)^s\subset I\subset\rad(K\Q)^2$.

Let us denote by $\rep(\Q)$ the abelian $K$-linear category of finite-dimensional \textit{$K$-representations} of $\Q$, that is, tuples
\[((M_i)_{i\in \Q_0},(M_\alpha\colon M_i\rightarrow M_j)_{(\alpha\colon i\rightarrow j)\in \Q_1}),\] 
of $K$-vector spaces $M_i$  and $K$-linear maps $M_{\alpha}$. A \textit{morphism of representations} $M=((M_i)_{i\in \Q_0},(M_\alpha)_{\alpha\in \Q_1})$ and
 \mbox{$M'=((M'_i)_{i\in \Q_0},(M'_\alpha)_{\alpha\in \Q_1})$} consists of tuples of $K$-linear maps $(f_i\colon M_i\rightarrow M'_i)_{i\in \Q_0}$, such that $f_jM_\alpha=M'_\alpha f_i$ for every arrow $\alpha\colon i\rightarrow j$ in $\Q_1$.
 
For a representation $M$ and a path $\omega$ in $\Q$ as above, we denote $M_\omega=M_{\alpha_s}\cdot\punkte\cdot M_{\alpha_1}$. A representation $M$ is called \textit{bound by $I$} if $\sum_\omega\lambda_\omega M_\omega=0$ whenever $\sum_\omega\lambda_\omega\omega\in I$. We denote by $\rep(\Q,I)$ the category of representations of $\Q$ bound by $I$, which is equivalent to the category of finite-dimensional $K\Q/I$-representations.

Given a representation $M$ of $\Q$, its \textit{dimension vector} $\dimv M\in\mathbf{N}\Q_0$ is defined by $(\dimv M)_{i}=\dim_K M_i$ for $i\in \Q_0$. For a fixed dimension vector $\df\in\mathbf{N}\Q_0$, we denote by $\rep(\Q,I)(\df)$ the full subcategory of $\rep(\Q,I)$ of representations of dimension vector $\df$.


For certain classes of finite-dimensional algebras, a convenient tool for the classification of the indecomposable representations is the \textit{Auslander-Reiten quiver} $\Gamma(\Q,I)$ of $\rep(\Q,I)$. Its vertices $[M]$ are given by the isomorphism classes of indecomposable representations of $\rep(\Q,I)$; the arrows between two such vertices $[M]$ and $[M']$ are parametrized by a basis of the space of \textit{irreducible maps} $f\colon M\rightarrow M'$. One standard technique to calculate the Auslander-Reiten quiver for certain algebras is the \textit{knitting process} (see, for example, \cite[IV.4]{ASS}). In some cases, large classes of representations or even the whole Auslander-Reiten quiver $\Gamma(\Q,I)$ can be calculated by using \textit{covering techniques}: results about the connection between representations of the universal covering quiver (with relations) of $K\Q/I$ and the representations of $K\Q/I$ are available by P. Gabriel \cite{Ga3} and others.

 
 A finite-dimensional $K$-algebra $\A:=K\Q/I$ is called \textit{of finite representation type}, if the number of isomorphism classes of indecomposable representations is finite; otherwise it is of \textit{infinite representation type}.
%
The minimal quiver algebras (with relations) of infinite representation type have been discussed by K. Bongartz; and by D. Happel and D. Vossieck, which lead to the famous Bongartz-Happel-Vossieck list (abbreviated by BHV-list), see for example \cite{HaVo}. If a quiver with relations contains one of the listed quivers as a subquiver, then the corresponding algebra is of infinite representation type; and the given so-called \textit{nullroots} determine concrete one-parameter families of these dimension vectors.


 
For a fixed dimension vector  $\df\in\mathbf{N}\Q_0$, we define the affine space 
\[R_{\df}(\Q):= \bigoplus_{\alpha\colon i\rightarrow j}\Hom_K(K^{d_i},K^{d_j}).\] 
Its points $m$ naturally correspond to representations $M\in\rep(\Q)(\df)$ with $M_i=K^{d_i}$ for $i\in \Q_0$.  Via this correspondence, the set of representations bound by $I$ corresponds to a closed subvariety $R_{\df}(\Q,I)\subset R_{\df}(\Q)$. The group $\GL_{\df}=\prod_{i\in \Q_0}\GL_{d_i}$ acts on $R_{\df}(\Q)$ and on $R_{\df}(\Q,I)$ via base change, furthermore the $\GL_{\df}$-orbits $\Orb_M$ of this action are in bijection with the isomorphism classes of representations $M$ in $\rep(\Q,I)(\df)$.

\medskip

The following fact on associated fibre bundles sometimes makes it possible to translate an algebraic group action into another algebraic group action that is easier to understand (see, \cite{Se} amongst others).
\begin{theorem}\label{assocfibrebundles}
Let $G$ be a linear algebraic group with a closed subgroup $H$ and let $X$ be a $G-$variety. Assume that $\pi\colon X \rightarrow G/H$ is a $G$-equivariant morphism and set $F\coloneqq \pi^{-1} (eH)$. \\
Then $H$ acts on $F$ and $X$ is isomorphic to the associated fibre bundle $G\times^HF$. Furthermore, the embedding $\phi\colon F \hookrightarrow X$ induces a bijection $\Phi$ between the $H$-orbits in $F$ and the $G$-orbits in $X$ preserving orbit closure relations,
 dimensions of stabilizers (of single points), codimensions and types of singularities.
\end{theorem}
Given a $G$-variety $X$, we say that $G$ \textit{acts infinitely} on $X$, if the number of orbits of the action is infinite; and \textit{finitely}, otherwise.
We also speak about \textit{infinite} or \textit{finite actions}.

\section{Actions in the quiver context}\label{sect:act_quiv}

Fix an upper-block parabolic subgroup $P$ of $\GL_n$ of block sizes $\bfp:=(b_1,\punkte,b_p)$. We denote by $L_P$ the Levi subgroup of $P$ and by $\pp := \mathrm{Lie}(P)$ its Lie algebra. Given $x\leq n$, we define $\N_\pp^{(x)}$ as the variety of so-called $x$-nilpotent matrices in $\pp$, that is, of matrices $N\in\pp$, such that $N^x=0$. As a special case, we obtain $\N_\pp$ for $x=n$, which is the irreducible variety of nilpotent matrices in $\pp$. Define $\mathfrak{n}_{\pp}$ to be the nilradical of $\pp$.  The groups $L_P$ and $P$ act on $\N_\pp^{(x)}$ and on $\mathfrak{n}_{\pp}$ via conjugation. 

 Our main aim is to answer the following question:
\begin{center}
 \textquotedblleft For which $P$ is the number of $P$-orbits in $\N_{\pp}$ finite? \textquotedblright
\end{center} 
It is natural to look at a broader context and we will also consider  the Levi subgroup $L_P$ as the acting group; and the nilradical $\mathfrak{n}_\pp$ as the variety on which our groups act. Every such action is translated to a representation-theoretic setup by defining a suitable finite-dimensional algebra in \ref{ssect:transl}. We prove certain reduction methods in \ref{ssect:reductions} and proceed  in \ref{ssect:Levi_result} by classifying the finite actions in the Levi case.

\subsection{Translations to a quiver settings}\label{ssect:transl}
\subsubsection[The P-action on Np]{The $P$-action on $\N_{\pp}$}\label{sssect:transl_PNp}
Consider the quiver
\begin{center}\small\begin{tikzpicture}
\matrix (m) [matrix of math nodes, row sep=0.01em,
column sep=1.5em, text height=0.5ex, text depth=0.1ex]
{\Q_p\colon & \bullet & \bullet &  \bullet & \cdots  & \bullet & \bullet  & \bullet \\ & 1 & 2 &  3 & &   p-2 &  p-1  & p \\ };
\path[->]
(m-1-2) edge node[above=0.025cm] {$\alpha_1$} (m-1-3)
(m-1-3) edge  node[above=0.025cm] {$\alpha_2$}(m-1-4)
(m-1-4) edge  node[above=0.025cm] {$\alpha_3$}(m-1-5)
(m-1-5) edge  node[above=0.025cm] {$\alpha_{p-3}$}(m-1-6)
(m-1-6) edge  node[above=0.025cm] {$\alpha_{p-2}$}(m-1-7)
(m-1-7) edge node[above=0.025cm] {$\alpha_{p-1}$} (m-1-8)
(m-1-2) edge [loop above=0.05cm] node{$\beta_1$} (m-1-2)
(m-1-3) edge [loop above=0.05cm] node{$\beta_2$} (m-1-3)
(m-1-4) edge [loop above=0.05cm] node{$\beta_3$} (m-1-4)
(m-1-6) edge [loop above=0.05cm] node{$\beta_{p-2}$} (m-1-6)
(m-1-7) edge [loop above=0.05cm] node{$\beta_{p-1}$} (m-1-7)
(m-1-8) edge [loop above=0.05cm] node{$\beta_p$} (m-1-8);\end{tikzpicture}\end{center} 
together with the admissible ideal \[I_x:= (\beta_{j}^x,~1\leq j\leq p;~~ \beta_{i+1}\alpha_{i}-\alpha_i\beta_{i},~ 1\leq i\leq p-1), \qquad (x\in \mathbb{N}_{\geqslant 2})\] that is, the ideal generated by all commutativity relations and a nilpotency condition at each loop. The corresponding path algebra $\A(p,x):= K \Q_p/I_x$  with relations is finite-dimensional. Let us fix the dimension vector 
\[\dfp:=(d_1,\punkte,d_p):=(b_1,b_1+b_2, \punkte, b_1+...+b_p)\]
 and formally set $b_0=0$. As explained in Section \ref{sect:theory}, the algebraic group $\GL_{\dfp}$ acts on $R_{\dfp}(\Q_p,I_x)$; the orbits of this action are in bijection with the isomorphism classes of representations in $\rep(\Q_p,I_x)(\dfp)$.
 
Let us define $\rep^{\inj}(\Q_p,I_x)(\dfp)$ to be the full subcategory of $\rep(\Q_p,I_x)(\dfp)$ consisting of representations $((M_i)_{1\leq i\leq p},(M_{\rho})_{\rho\in (\Q_p)_1})$, such that $M_{\alpha_i}$ is injective for every $i\in\{1,\punkte, p-1\}$. Corresponding to this subcategory, there is an open subset $R_{\dfp}^{\inj}(\Q_p,I_x)\subset R_{\dfp}(\Q_p,I_x)$, which is stable under the $\GL_{\dfp}$-action.
\begin{lemma} \label{bijection}
There is an isomorphism $R_{\dfp}^{\inj}(\Q_p,I_x)\cong \GL_{\dfp}\times^{P}\N_{\pp}^{(x)}$. Thus, there exists a bijection $\Phi$ between the set of $P$-orbits in $\N_{\pp}^{(x)}$ and the set of $\GL_{\dfp}$-orbits in $R_{\dfp}^{\inj}(\Q_p,I_x)$, which sends an orbit $P.N\subseteq \N_{\pp}^{(x)}$ to the isomorphism class of the representation
\begin{center}\small\begin{tikzpicture}[descr/.style={fill=white,inner sep=2.5pt}]
\matrix (m) [matrix of math nodes, row sep=0.05em,
column sep=2em, text height=1.5ex, text depth=0.2ex]
{ K^{d_1} & K^{d_2} & K^{d_3} & \cdots  & K^{d_{p-2}} & K^{d_{p-1}}  & K^{n}\\ };
\path[->]
(m-1-1) edge node[above=0.05cm] {$\epsilon_1$} (m-1-2)
(m-1-2) edge  node[above=0.05cm] {$\epsilon_2$}(m-1-3)
(m-1-3) edge  (m-1-4)
(m-1-4) edge  (m-1-5)
(m-1-5) edge  node[above=0.05cm] {$\epsilon_{p-2}$}(m-1-6)
(m-1-6) edge node[above=0.05cm] {$\epsilon_{p-1}$} (m-1-7)
(m-1-1) edge [loop above=0.05cm] node{$N_1$} (m-1-1)
(m-1-2) edge [loop above=0.05cm] node{$N_2$} (m-1-2)
(m-1-3) edge [loop above=0.05cm] node{$N_3$} (m-1-3)
(m-1-5) edge [loop above=0.05cm] node{$N_{p-2}$} (m-1-5)
(m-1-6) edge [loop above=0.05cm] node{$N_{p-1}$} (m-1-6)
(m-1-7) edge [loop right] node{$N$,} (m-1-7);\end{tikzpicture}\end{center}
where $N_i$ is the $d_i\times d_i$-submatrix of $N$ of the first $d_i$ rows and columns and $\epsilon_i\colon K^{d_i}\hookrightarrow K^{d_{i+1}}$ are the natural embeddings. This bijection preserves orbit closure relations,
 dimensions of stabilizers (of single points), codimensions and types of singularities.
\end{lemma}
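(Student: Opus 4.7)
The plan is to apply the associated fibre bundle result (Theorem \ref{assocfibrebundles}) to the group $G:=\GL_{\dfp}$ acting on $X:=R_{\dfp}^{\inj}(\Q_p,I_x)$. Define a $G$-equivariant morphism $\pi\colon X\to Y$ by forgetting the loops and remembering only the tuple of injections $(M_{\alpha_i})_{1\leq i\leq p-1}$; here $Y$ is the variety of sequences of injective linear maps $(\epsilon_i\colon K^{d_i}\hookrightarrow K^{d_{i+1}})$. The map $\pi$ is surjective, since any such sequence can be completed to a representation by taking all loops to be zero. Let $y_0\in Y$ be the tuple of natural inclusions $K^{d_i}\hookrightarrow K^{d_{i+1}}$.

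First I would verify that $Y=G.y_0$: given $y\in Y$, the nested images $V_i:=\im(\epsilon_{p-1}\cdots\epsilon_i)\subset K^n$ form a flag of the correct dimensions, and any $g_p\in\GL_n$ sending this flag to the standard one, together with the $g_i$ obtained as its induced isomorphisms on these subspaces, yields $g=(g_1,\punkte,g_p)\in G$ with $g.y=y_0$. Next I compute the stabilizer $H=\mathrm{Stab}_G(y_0)$: the defining condition $g_{i+1}\epsilon_i=\epsilon_i g_i$ forces $g_i$ to be the restriction of $g_p$ to $K^{d_i}\subset K^n$, so $g_p$ must preserve the standard flag --- i.e.\ $g_p\in P$ --- while the remaining $g_i$ are uniquely determined by $g_p$. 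The projection $(g_1,\punkte,g_p)\mapsto g_p$ is therefore an isomorphism $H\xrightarrow{\sim} P$.

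It remains to identify the fibre $F:=\pi^{-1}(y_0)$ together with the induced $H$-action. A point of $F$ is a tuple of $x$-nilpotent endomorphisms $(N_i\in\End(K^{d_i}))_i$ satisfying $N_{i+1}\epsilon_i=\epsilon_i N_i$ for all $i$, which forces $N_i$ to be the leading $d_i\times d_i$ submatrix of $N:=N_p$ and $N$ itself to preserve the standard flag; thus $F$ is canonically identified with $\N_\pp^{(x)}$, and the $H\cong P$-action translates to the usual $P$-conjugation. Theorem \ref{assocfibrebundles} then yields the isomorphism $X\cong G\times^P\N_\pp^{(x)}$ and the claimed bijection $\Phi$, with its preservation of orbit closures, stabilizer dimensions and codimensions; unpacking the construction shows that $P.N$ is sent to the isomorphism class of the representation displayed in the statement. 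No serious obstacle arises: the argument is essentially bookkeeping, the only delicate point being to check that the identification $H\cong P$ is compatible with the $P$-action on $\N_\pp^{(x)}$ by conjugation.
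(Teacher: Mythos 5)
Your proof is correct and takes essentially the same approach as the paper: both apply the associated-fibre-bundle theorem (Theorem \ref{assocfibrebundles}) to the $\GL_{\dfp}$-equivariant projection that forgets the loop data, with $y_0$ the standard chain of inclusions, then identify the stabilizer with $P$ and the fibre with $\N_{\pp}^{(x)}$. The paper merely states these verifications more briefly, whereas you spell them out; there is no difference in substance.
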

\begin{proof}
 Consider the subquiver $\widetilde{\Q_p}$ of $\Q_p$ with $(\widetilde{\Q_p})_0=(\Q_p)_0$ and $(\widetilde{\Q_p})_1=\{\alpha_1,...,\alpha_{p-1}\}$. 
 Define the $\widetilde{\Q_p}$-representation
\[y_0:= K^{d_1} \xrightarrow{\epsilon_{1}} K^{d_2} \xrightarrow{\epsilon_{2}} \cdots \xrightarrow{\epsilon_{p-2}} K^{d_{p-1}} \xrightarrow{\epsilon_{p-1}} K^{n},\]
 with $\epsilon_j$ being the canonical embedding of $K^{d_{j}}$ into $K^{d_{j+1}}$ and and denote its stabilizer in $\GL_{\dfp}$ by $H$ (this is a closed subgroup).  Then the variety $R_{\dfp}^{\inj}(\widetilde{\Q_p})$ consists of tuples of injective maps and, in particular, $R_{\dfp}^{\inj}(\widetilde{\Q_p})\cong\GL_{\dfp}/H$ equals the orbit of $y_0$.\\[1ex]
 We have a natural $\GL_{\dfp}$-equivariant projection $\pi\colon R_{\dfp}^{\inj}(\Q_p,I)\rightarrow R_{\dfp}^{\inj}(\widetilde{\Q_p})$.  
Then $H$ is isomorphic to $P$ and the fibre of $\pi$ over $y_0$ is isomorphic to $\N_{\pp}^{(x)}$. Thus, $R_{\dfp}^{\inj}(\Q_p,I_x)$ is isomorphic to the associated fibre bundle $\GL_{\dfp}\times^{P}\N_{\pp}^{(x)}$ by Theorem \ref{assocfibrebundles}, yielding the claimed bijection $\Phi$.
\end{proof}

\begin{remark}
The conjugation-action of $P$ on its nilradical has been classified by L. Hille and G. R\"ohrle \cite{HiRoe}. In particular the number of $P$-orbits on $\mathfrak{n}_{\pp}$ is shown to be finite if and only if
$p\leq 5$.\\[1ex]
The result is proved by translating the setup to a quiver-theoretic one, as well. The authors consider the quiver
\begin{center}\begin{tikzpicture}
\matrix (m) [matrix of math nodes, row sep=0.05em,
column sep=2em, text height=1.5ex, text depth=0.2ex]
{\Q'_p\colon & \bullet & \bullet &  \bullet & \cdots  & \bullet & \bullet  & \bullet  \\ };
\path[->]
(m-1-2) edge [bend left=20] node[above=0.05cm] {\begin{footnotesize}$\alpha_1$\end{footnotesize}} (m-1-3)
(m-1-3) edge [bend left=20] node[above=0.05cm] {\begin{footnotesize}$\alpha_2$\end{footnotesize}}(m-1-4)
(m-1-6) edge [bend left=20] node[above=0.05cm] {\begin{footnotesize}$\alpha_{p-2}$\end{footnotesize}}(m-1-7)
(m-1-7) edge [bend left=20] node[above=0.05cm] {\begin{footnotesize}$\alpha_{p-1}$\end{footnotesize}} (m-1-8)
(m-1-3) edge [bend left=20] node[below=0.05cm] {\begin{footnotesize}$\beta_1$\end{footnotesize}} (m-1-2)
(m-1-4) edge [bend left=20] node[below=0.05cm] {\begin{footnotesize}$\beta_2$\end{footnotesize}}(m-1-3)
(m-1-7) edge [bend left=20] node[below=0.05cm] {\begin{footnotesize}$\beta_{p-2}$\end{footnotesize}}(m-1-6)
(m-1-8) edge [bend left=20] node[below=0.05cm] {\begin{footnotesize}$\beta_{p-1}$\end{footnotesize}} (m-1-7);\end{tikzpicture}\end{center} 
 together with the relations $\beta_1\alpha_1=0$ and $\beta_i\alpha_i=\alpha_{i-1}\beta_{i-1}$ for $i\in\{2,\punkte,p-1\}$ which generate an ideal $I'_p$. They prove that the orbits of the action are in bijection with certain isomorphism classes of $K\Q'_p/I'_p$-representations and classify the latter.
\end{remark}
\subsubsection{The Levi-action on the nilradical}\label{sssect:Levi_nilr}
Consider the quiver $\Q'_{L,p}$ of $p$  vertices, such that there is an arrow $i\rightarrow j$, whenever $i<j$. For example, $\Q'_{L,5}$ is given by

\begin{center}\begin{tikzpicture}
\matrix (m) [matrix of math nodes, row sep=0.05em,
column sep=2em, text height=1.5ex, text depth=0.2ex]
{\Q'_{L,5}\colon & \bullet & \bullet &  \bullet & \bullet  & \bullet  \\ & \mathrm{1} & \mathrm{2} &  \mathrm{3} &   \mathrm{4}  & \mathrm{5} \\};
\path[->]
(m-1-2) edge   (m-1-3)
(m-1-2) edge [bend right=10] (m-1-4)
(m-1-2) edge [bend left=20] (m-1-5)
(m-1-2) edge [bend right=20]  (m-1-6)
(m-1-3) edge (m-1-4)
(m-1-3) edge [bend right=20] (m-1-5)
(m-1-3) edge [bend left=20] (m-1-6)
(m-1-4) edge   (m-1-5)
(m-1-4) edge [bend right=10] (m-1-6)
(m-1-5) edge   (m-1-6)
;\end{tikzpicture}\end{center} 
We define $\A'_{L,p}:= K \Q'_{L,p}$  to be the corresponding finite-dimensional algebra.

As explained in Section \ref{sect:theory}, the algebraic group $L_P\cong \GL_{\bfp}$ acts on $R_{\bfp}(\Q'_{L,p})$; the orbits of this action are in bijection with the isomorphism classes of representations in $\rep(\Q'_{L,p})(\bfp)$. These are in bijection with the $L_P$-orbits in $\mathfrak{n}_\pp$.

\subsubsection[The Levi-action on Np]{The Levi-action on $\N_{\pp}$}\label{sssect:Levi_Np}
Consider the quiver $\Q'_{L,p}$ defined above and add a loop $\beta_i$ at each vertex $1\leq i\leq p$; we denote the resulting quiver by $\Q_{L,p}$. Define the ideal $I$ to be generated by the relations $\beta_i^n$ for all $i$. Then the algebra $\A_{L,p}:= K\Q_{L,p}/I$ is finite-dimensional. 

As explained in Section \ref{sect:theory} and similarly to the previous case, the algebraic group $L_P\cong \GL_{\bfp}$ acts on $R_{\bfp}(\Q_{L,p},I)$ and the orbits of this action are in bijection with the isomorphism classes of representations in $\rep(\Q_{L,p},I)(\bfp)$. These are in bijection with the $L_P$-orbits in $\N_\pp$.

Note that these last constructions are easily generalized to $x$-nilpotent matrices.
\subsection{Reductions}\label{ssect:reductions}
Here, we prove three lemmas in order to compare actions of different parabolics or Levis. That is, we show three kinds of classical reductions. Analogues of these statements are available for the $P$-action on $\mathfrak{n}_\pp$ in \cite{Roe}.

 Given two tuples $(b_1,...,b_p)$ and $(b_1',...,b_q')$, we define  $(b_1,...,b_p)\leq_c(b_1',...,b_q')$ if and only if 
there is an increasing sequence $i_1<...<i_p$, such that $b_j\leq b_{i_j}'$ for all $j$.

\begin{lemma}\label{lem:red_induction}
Let $P$ and $P'$ be respective parabolic subgroups of $\GL_n$ and $\GL_{n'}$ with respective block sizes $\bv_P$ and $\bv_{P'}$ such that $\bv_P\leq_c\bv_{P'}$. \\Assume that $P$ acts infinitely on $\N_\pp$ (or $L_P$ acts infinitely on $\N_\pp$ or $\mathfrak{n}_\pp$, respectively). 
Then $P'$ acts infinitely on $\N_{\pp'}$ (or $L_{P'}$ acts infinitely on $\N_{\pp'}$ or $\mathfrak{n}_{\pp'}$, respectively). 
\end{lemma}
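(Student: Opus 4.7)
The plan is to use the representation-theoretic dictionaries of Section \ref{ssect:transl} and prove, for each of the three actions separately, that $\bv_P \leq_c \bv_{P'}$ induces a functor $F$ between the respective bound-representation categories which is injective on isomorphism classes. Infinitely many classes upstairs then force infinitely many classes downstairs, and the bijection of each dictionary with orbits gives the claim. Since the relation $\leq_c$ is generated by the two elementary moves---(i) enlarging a single block $b_j \rightsquigarrow b_j+1$, and (ii) inserting a new block of size one at some position---by transitivity it suffices to prove the lemma assuming $\bv_{P'}$ is obtained from $\bv_P$ by a single such move.

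I would focus first on the $P$-action on $\N_\pp$. For move (i), define $F$ by leaving $M\in\rep^{\inj}(\Q_p,I_n)(\dfp)$ unchanged at vertices $i<j$ and replacing $M_i$ by $M_i\oplus K$ for $i\geq j$; extend the embeddings by the identity on the new $K$-summand (and by $m\mapsto (m,0)$ at the transition $i=j-1$), and extend the loops $\beta_i$ by the zero map on the new summand. For move (ii), define an analogous functor targeting $\rep^{\inj}(\Q_{p+1},I_{n+1})$: first pass from $M$ to a $\Q_{p+1}$-representation $\widetilde{M}$ by inserting at position $k$ a duplicated vertex (with identity embedding and zero loop), then apply the recipe of move (i) at that vertex. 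In both cases the commutativity and nilpotency relations are preserved, $F(M)$ lies in $\rep^{\inj}$, it has the correct dimension vector, and by inspection it splits as $F(M)=\widetilde{M}\oplus S$ inside the ambient category $\rep(\Q,I_{n+1})$, where $S$ is the fixed \emph{string} representation supported on the vertices $\geq j$ (resp.\ $\geq k$), with identity embeddings and zero loops.

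Injectivity of $F$ on iso classes is then immediate: by Krull-Schmidt cancellation in the finite-dimensional module category, $F(M_1)\cong F(M_2)$ yields $\widetilde{M_1}\cong \widetilde{M_2}$, and the passage $M\mapsto\widetilde{M}$ is manifestly injective on iso classes. Combined with Lemma \ref{bijection}, this settles the $P$-case. The two Levi assertions proceed along the same lines, using the quivers $\Q'_{L,p}$ (for $\mathfrak{n}_\pp$, Section \ref{sssect:Levi_nilr}) and $\Q_{L,p}$ (for $\N_\pp$, Section \ref{sssect:Levi_Np}) with dimension vector $\bv_P$; since no injectivity condition is imposed on the arrows, the padding summand $S$ becomes a single simple module supported at the affected vertex and the verification is even shorter.

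The main technical obstacle is really only bookkeeping: one must verify per-case that $F$ is well-defined---commutativity relations, nilpotency bound, dimension vector, and the $\rep^{\inj}$-condition all preserved---and that the splitting $F(M) = \widetilde{M}\oplus S$ is a genuine direct-sum decomposition in the ambient module category, so that Krull-Schmidt cancellation applies without subtleties. Once this is checked, the rest of the argument is purely formal.
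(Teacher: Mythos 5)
Your proposal is correct and uses the same core mechanism as the paper's proof: pad each representation of the infinite family into the larger quiver, add a fixed direct summand (the "string" $U_i$ in the paper, your $S$) to hit the required dimension vector, and invoke Krull--Schmidt cancellation to conclude that the padded family stays pairwise non-isomorphic. The paper does this in one shot for an arbitrary $\leq_c$-comparison, explicitly writing down $N_t = M'_t \oplus \bigoplus_{i\neq i_j} U_i^{b'_i}\oplus\bigoplus_j U_{i_j}^{b'_{i_j}-b_j}$ (and the analogous sum of simples for the Levi cases), whereas you first observe that $\leq_c$ is generated by the two elementary moves ``increase one block by $1$'' and ``insert a block of size $1$'' and then treat each move by a functor splitting as $\widetilde{M}\oplus S$. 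That factorization is a clean organizational device and slightly reduces the bookkeeping per step, but the underlying construction and the Krull--Schmidt cancellation argument are the same; neither approach buys anything the other lacks.
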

\begin{proof}
Denote $\bv_P$ by $(b_1,...,b_p)$ and $\bv_{P'}$ by $(b_1',...,b_q')$.
As seen before, the orbits of each action translate to certain isomorphism classes of representations. Let $(M_{t})_{t\in I}$ be an infinite family of non-isomorphic such representations.

Assume first that the acting group is $L_P$. Let $M'_t$ be the corresponding $\Q_{L,q}$- (or \mbox{$\Q'_{L,q}$-)} representation with dimension vector $\underline{c}\in \mathbb{N}^q$, where $c_{i_j}=b_j$ and $c_k=0$, otherwise. Denote by $S_i$ the simple module supported at the vertex $i$. Then the family $(N_t)_{t\in I}$, where $N_t=M'_t\oplus \bigoplus_{i=1}^p S_i^{b'_{i}-c_i}$ contains pairwise non-isomorphic representations.

Consider now the action of $P$ on $\N_{\pp}$. Formally set $i_{p+1}:=q+1$. Let $(M'_{t})_{t\in I}$ be the naturally induced family of pairwise non-isomorphic  representations in $\rep^{\inj}(\Q_q,I_n)$ defined by
\[(M'_{t})_i:=\left\{\begin{array}{ll}(M_{t})_j &\textrm{if $i_j\leqslant i< i_{j+1}$ }\\
 0 &\textrm{if $i<i_1$,}
\end{array}\right.\]
together with the induced maps $\beta'_{i,t}:=\beta_{j,t}$  ($i_j\leqslant i< i_{j+1}$) and $\alpha'_{i_{j+1}-1, t}:=\alpha_{j,t}$, further $\alpha'_{i,t}:=0$ if $i<i_1$ and $\alpha'_{i,t}$ the obvious isomorphism, otherwise.

For $1\leqslant i\leqslant q$, define a representation $U_i$ in  $\rep^{\inj}(\Q_q,I_n)$ via $(U_i)_k=K^{\delta_{k\geqslant i}}$ with injective $\alpha$ and zero $\beta$.
Then the representations $N_{t}:=M'_{t}\oplus\bigoplus_{i\neq i_j} U_i^{b'_i}\oplus\bigoplus_{j} U_{i_j}^{b'_{i_{j}}-b_j}$ ($t\in I$) form an infinite family of non-isomorphic representations. Hence $P'$ acts infinitely on $\N_{\pp'}$.\qedhere

\end{proof}
We define the transposition ${}^t(\cdot)$ to be the anti-involution of the Lie algebra $\gl_n$ which is induced by the permutation $(1,n)(2,n-2)...$ . It sends the parabolic subgroup $P$ (resp. subalgebra $\pp$) to a parabolic subgroup ${}^t\!P$ (resp. subalgebra ${}^t\pp$), such that $\dff_{({}^t\!P)}=(n-d_{p-1}, \dots , n-d_1, n)$, and $\bv_{({}^t\!P)}=(b_p, \dots , b_1)$. Hence we have
\begin{lemma}\label{lem:red_symmetry}
Let $P$ and $P'$ be parabolics of respective block sizes $\bv_P=(b_1,\dots, b_p)$ and $\bv_{P'}=(b_p,\dots,b_1)$. Then $P$ acts infinitely on $\N_\pp$ (or $L_P$ acts infinitely on $\N_\pp$ or $\mathfrak{n}_\pp$, respectively) if and only if $P'$ acts infinitely on $\N_{\pp'}$ (or $L_{P'}$ acts infinitely on $\N_{\pp'}$ or $\mathfrak{n}_{\pp'}$, respectively). 
\end{lemma}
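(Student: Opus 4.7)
The plan is to use the anti-involution ${}^t(\cdot)$ directly to set up a bijection between the relevant orbit sets, with essentially no machinery beyond the fact that it reverses multiplication.

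First, I would verify that ${}^t(\cdot)$ maps all the relevant objects to their primed counterparts. Since the anti-diagonal transposition simply reverses the block pattern, an upper-block matrix with block sizes $(b_1,\dots,b_p)$ is sent to an upper-block matrix with block sizes $(b_p,\dots,b_1)$. Hence $\pp$ is sent to $\pp'={}^t\pp$, the Levi $L_P$ is sent to $L_{P'}$, and the nilradical $\mathfrak{n}_\pp$ is sent to $\mathfrak{n}_{\pp'}$ because strict block-upper-triangularity is preserved. Nilpotency of a matrix is invariant under any kind of transposition (the characteristic polynomial is unchanged), so ${}^t(\cdot)$ restricts to bijections $\N_\pp\to\N_{\pp'}$ and $\mathfrak{n}_\pp\to\mathfrak{n}_{\pp'}$.

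The key computation is then just one line: since ${}^t(\cdot)$ is an anti-involution,
\[
{}^t(gxg^{-1})={}^t(g^{-1})\cdot{}^tx\cdot{}^tg=h\cdot{}^tx\cdot h^{-1},\qquad h:={}^t(g^{-1}).
\]
For $g\in P$ one has $h\in P'$, so $x\mapsto{}^tx$ sends the $P$-orbit of $x\in\N_\pp$ into the $P'$-orbit of ${}^tx\in\N_{\pp'}$. Because ${}^t(\cdot)$ is an involution, applying it a second time provides the inverse map, so we obtain a bijection between $P$-orbits in $\N_\pp$ and $P'$-orbits in $\N_{\pp'}$. In particular, one set of orbits is infinite if and only if the other is.

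Exactly the same computation, with $g\in P$ replaced by $g\in L_P$ (noting that ${}^t$ sends $L_P$ to $L_{P'}$), gives the analogous bijection for the Levi action on $\N_\pp$, and by restricting to $\mathfrak{n}_\pp\subset\N_\pp$ one obtains it for the Levi action on $\mathfrak{n}_\pp$ as well. There is essentially no obstacle in the proof; the only subtle point worth stating explicitly is that an anti-involution reverses the order of multiplication, which is precisely why one conjugates by ${}^t(g^{-1})$ rather than by ${}^tg$ when translating a conjugation on one side to a conjugation on the other.
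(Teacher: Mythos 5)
Your proof is correct and follows exactly the approach the paper intends: the paper introduces the anti-involution ${}^t(\cdot)$ right before the lemma and simply states ``Hence we have'' the lemma, treating the orbit bijection as immediate. You have merely spelled out the one-line computation $({}^t(gxg^{-1}) = h\,{}^tx\,h^{-1}$ with $h = {}^t(g^{-1}) = ({}^tg)^{-1}\in P')$ that the paper leaves implicit.
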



\begin{lemma}\label{lem:red_subgroup}
Let $P$ and $P'$ be parabolic subgroups of $\GL_n$. Assume that $P$ acts infinitely on $\N_\pp$ and that $P'\subset P$. Then $P'$ acts infinitely on $\N_{\pp'}$, where $\pp'=\Lie P'$. 
\end{lemma}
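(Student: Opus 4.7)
The strategy is to reduce the statement to the single geometric claim that $\N_\pp = P\cdot \N_{\pp'}$, i.e., every nilpotent element of $\pp$ admits a $P$-conjugate lying in $\pp'$. Once this is in hand, the proof is a simple orbit-counting argument: for each $P$-orbit $\Orb\subset \N_\pp$ select a point $x_\Orb\in \Orb \cap \N_{\pp'}$; since $P'\subset P$, the orbit $P'\cdot x_\Orb$ is contained in $\Orb$, so distinct $P$-orbits produce disjoint $P'$-orbits in $\N_{\pp'}$. Hence the number of $P'$-orbits on $\N_{\pp'}$ is at least the number of $P$-orbits on $\N_\pp$, which is infinite by hypothesis.

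To establish $\N_\pp = P\cdot \N_{\pp'}$, I would first conjugate inside $\GL_n$ so that both $P$ and $P'$ become standard upper-block parabolics; this is harmless since any two Borels of $\GL_n$ lying inside $P$ are $P$-conjugate, so $P'$ can be standardized without moving $P$. The inclusion $P'\subset P$ then forces $\bv_{P'}$ to refine $\bv_{P}$, and the Levi decomposition $\pp = \mathfrak{l}\oplus \mathfrak{n}_\pp$ writes any $N\in\pp$ as $N = N_L + N_U$ with $N_L\in\mathfrak{l}=\prod_i\gl_{b_i}$ and $N_U\in\mathfrak{n}_\pp$. A direct check shows $\mathfrak{n}_\pp\subset \pp'$: any matrix strictly block-upper-triangular for the coarser $\bv_P$-partition is automatically block-upper-triangular for the finer $\bv_{P'}$-partition.

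It then suffices to move $N_L$ into $\pp'$ by $L$-conjugation, since $\mathfrak{n}_\pp$ is $L$-stable. Nilpotency of $N$ forces nilpotency of each component $N_{L,i}\in\gl_{b_i}$; Jordan normal form furnishes $l_i\in\GL_{b_i}$ conjugating $N_{L,i}$ to a strictly upper triangular matrix, which lies in every parabolic subalgebra of $\gl_{b_i}$, in particular in the one prescribed by the refinement of $b_i$ induced by $\bv_{P'}$. Assembling $l=(l_1,\dots,l_p)\in L\subset P$ yields $lNl^{-1}\in\pp'\cap\N=\N_{\pp'}$, as required.

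Conceptually there is no serious obstacle: the lemma reduces to Jordan normal form applied independently on each Levi block, combined with the elementary inclusion $\mathfrak{n}_\pp\subset\pp'$. The only point requiring care is the preliminary standardization of $P$ and $P'$ to a common upper-block form, after which the inclusion translates into refinement of block sizes and the Levi decomposition becomes fully explicit.
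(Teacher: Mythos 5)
Your proof is correct and takes essentially the same approach as the paper, which argues in one short paragraph that the map $P'.x\mapsto P.x$ from $P'$-orbits in $\N_{\pp'}$ to $P$-orbits in $\N_\pp$ is surjective because ``any $P$-orbit meets the Borel subalgebra.'' You reach the same inequality on orbit counts by the mirror-image argument (choose a representative in $\N_{\pp'}$ for each $P$-orbit, getting an injection instead of a surjection), which is a purely presentational difference. The genuine contribution of your write-up is that you actually \emph{prove} the geometric fact the paper takes for granted: after standardizing $P$ and $P'$ by a $P$-conjugation, you decompose $N=N_L+N_U$ along the Levi, observe that nilpotency of $N$ forces nilpotency of each block $N_{L,i}$ (the characteristic polynomial of a block upper triangular matrix is the product of those of the diagonal blocks), and conjugate each $N_{L,i}$ by $l_i\in\GL_{b_i}$ to strictly upper triangular form, so $lNl^{-1}\in\bb\cap\N\subset\N_{\pp'}$. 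All steps check out: $\mathfrak{n}_\pp\subset\pp'$ is elementary once $\bv_{P'}$ refines $\bv_P$, and the standardization step (any two Borels of $\GL_n$ contained in $P$ are $P$-conjugate) is a standard fact. So the difference is one of exposition rather than substance; your version is a self-contained expansion of the one-line proof in the paper.
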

\begin{proof}
Given a $P'$-orbit $P'.x$ in $\mathfrak p'$, we can associate a $P$-orbit in $\mathfrak p$ via $P'.x\mapsto P.x$. Since any $P$-orbit meets the Borel subalgebra, this map is surjective. The result follows. 
\end{proof}

\subsection{Results for Levi-actions}\label{ssect:Levi_result}
With standard techniques from quiver-representation theory, we classify the cases in which $L_P$ acts finitely on the nilpotent radical $\mathfrak{n}_\pp$ and on the nilpotent cone $\N_\pp$. 
\begin{lemma}\label{classLnilr}
 $L_P$ acts with finitely many orbits on $\mathfrak{n}_{\pp}$ if and only if $p\in\{1,2\}$, that is, if $P$ has at most two blocks.
\end{lemma}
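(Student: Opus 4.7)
The plan is to exploit the quiver translation from Section \ref{sssect:Levi_nilr}, under which $L_P$-orbits in $\mathfrak{n}_\pp$ correspond bijectively to isomorphism classes of representations of $\Q'_{L,p}$ of dimension vector $\bfp$. I would then treat the two directions separately.

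For the cases $p\in\{1,2\}$, I would argue as follows. When $p=1$ the quiver $\Q'_{L,1}$ has no arrows and $\mathfrak{n}_\pp=0$, so there is a single orbit. When $p=2$, $\Q'_{L,p}$ is the Dynkin quiver $A_2\colon 1\to 2$, which has only three indecomposable representations (equivalently, linear maps $K^{b_1}\to K^{b_2}$ are classified up to base change by their rank). Hence there are finitely many isomorphism classes for any fixed dimension vector.

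For $p\geq 3$, the strategy is to first settle the minimal case $\bfp=(1,1,1)$ and then invoke Lemma \ref{lem:red_induction}. Since all block sizes are $\geq 1$, we have $(1,1,1)\leq_c\bfp$ whenever $p\geq 3$, so the reduction does apply. For $\bfp=(1,1,1)$, the quiver $\Q'_{L,3}$ is the oriented triangle and the representation space is $R_{(1,1,1)}(\Q'_{L,3})\cong K^3$, with coordinates $(a,b,c)$ attached to the arrows $1\to 2$, $1\to 3$, $2\to 3$. A direct computation shows that the action of $(t_1,t_2,t_3)\in(K^*)^3=\GL_{(1,1,1)}$ sends $(a,b,c)$ to $(t_2t_1^{-1}a,\,t_3t_1^{-1}b,\,t_3t_2^{-1}c)$, so that the rational function $ac/b$ is a nonconstant invariant. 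Equivalently, on the open subset where $a,c\neq 0$ the torus allows us to normalize $a=c=1$ and leaves $b$ as a free modulus, producing an infinite family of pairwise non-isomorphic representations. Applying Lemma \ref{lem:red_induction} then transfers this infinite family to any $\bfp$ with $p\geq 3$.

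No step presents a serious obstacle: the only computation of substance is the verification that $ac/b$ distinguishes orbits in the $(1,1,1)$-case, and this is immediate from the torus weights. The rest consists of bookkeeping with the reduction lemma and the quiver--orbit dictionary set up earlier in the paper.
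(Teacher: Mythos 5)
Your proposal is correct and follows essentially the same route as the paper: the $p=1$ and $p=2$ cases are handled identically, the $(1,1,1)$ family you normalize to $(1,t,1)$ on the three arrows of the oriented triangle is precisely the family $M_t$ the paper exhibits, and both arguments conclude via Lemma \ref{lem:red_induction}. The only difference is cosmetic — you make the pairwise non-isomorphism explicit via the torus weights and the invariant $ac/b$, whereas the paper simply asserts it.
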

\begin{proof}
For $p=1$, we obtain the action of $\GL_n$ on $\{0\}$ which is clearly finite.

Let $p=2$, the problem amounts to classify matrices up to equivalence. This is known to be finite.

Let $p=3$, then \begin{center}\begin{tikzpicture}[descr/.style={fill=white,inner sep=2.5pt}]
\matrix (m) [matrix of math nodes, row sep=0.05em,
column sep=2em, text height=1.5ex, text depth=0.2ex]
{M_{t}\colon & K & K &  K \\\\};
\path[->]
(m-1-2) edge node[descr] {\footnotesize{id}}  (m-1-3)
(m-1-2) edge [bend left=20]  node[above] {\footnotesize{$t\cdot \mathrm{id}$}} (m-1-4)
(m-1-3) edge  node[descr] {\footnotesize{id}} (m-1-4)
;\end{tikzpicture}\end{center} 
are pairwise non-isomorphic representations of dimension vector $(1,1,1)$. Thus, an infinite family of non-$L_P$-conjugate matrices is given by
\[\left(
\begin{array}{lll}
0 & 1 & t \\ 
0 & 0 & 1 \\ 
0 & 0 & 0
        \end{array}
 \right) \]
This induces an infinite family of non-conjugate representations for every remaining case by Lemma \ref{lem:red_induction}.
\end{proof}

\begin{lemma}\label{classLNp}
 $L_P$ acts with finitely many orbits on $\N_{\pp}$ if and only if $P=\GL_n$ or $P$ is of block sizes $(1,n-1)$ or $(n-1,1)$.
\end{lemma}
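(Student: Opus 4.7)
The plan is to prove both implications, using the quiver translation from Section \ref{sssect:Levi_Np} together with Lemmas \ref{classLnilr}, \ref{lem:red_induction} and \ref{lem:red_symmetry} to reduce to a handful of base cases. For the ``if'' direction (finiteness), I distinguish three cases: $p = 1$ is the classical $\GL_n$-action on the nilpotent cone $\N$, for which finiteness is Jordan normal forms \cite{Jo1}; $\bfp = (n-1, 1)$ reduces to $\bfp = (1, n-1)$ via Lemma \ref{lem:red_symmetry}; and the substantive case is $\bfp = (1, n-1)$. For the ``only if'' direction, if $p \geq 3$ then Lemma \ref{classLnilr} produces an infinite family of pairwise $L_P$-non-conjugate elements already in $\mathfrak{n}_{\pp} \subseteq \N_{\pp}$, giving the result on $\N_{\pp}$; if $p = 2$ with both blocks of size at least $2$, it suffices by Lemma \ref{lem:red_induction} to exhibit an infinite family for $\bfp = (2,2)$.

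For $\bfp = (1, n-1)$, consider the translation to representations of $\Q_{L,2}$ with relations $\beta_i^n = 0$ and dimension vector $(1, n-1)$. The loop $\beta_1$ on the $1$-dimensional vertex is automatically zero, so a representation reduces to a triple $(V_2, \beta_2, v)$ with $V_2 = K^{n-1}$, $\beta_2$ nilpotent, and $v = \alpha(1) \in V_2$. I will enumerate the indecomposable summands that can appear. If a summand has $V_1 = K$ with $v \neq 0$, then $V_2$ must be indecomposable as a $K[\beta_2]$-module: otherwise, a decomposition $V_2 = V_2' \oplus V_2''$ of $K[t]$-modules together with a suitable unipotent automorphism $\bigl(\begin{smallmatrix} 1 & 0 \\ s & 1 \end{smallmatrix}\bigr)$ can be used to move $v$ into a single summand (the division needed to choose $s$ is possible once one compares the $t$-adic orders of the two components of $v$), which splits off a $T_\ell$-summand. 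Hence $V_2 \cong K[t]/(t^k)$, and the element $v$ is classified up to $\Aut_{K[t]}(V_2) \times K^*$ by the integer $j := \min\{i \geq 0 : v \in t^i V_2\}$, giving indecomposables $P_{k,j}$ for $0 \leq j < k \leq n$. Together with $S_1 := (K, 0)$ and $T_\ell := (0, K[t]/(t^\ell))$, any representation of dimension vector $(1, n-1)$ decomposes as one $S_1$ or one $P_{k,j}$ plus $T_\ell$'s summing to the remaining $V_2$-dimension, which is a finite enumeration.

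For $\bfp = (2,2)$, I will take the family $M_t$ ($t \in K$) defined by $V_1 = V_2 = K^2$, $\beta_1 = \beta_2 = \bigl(\begin{smallmatrix} 0 & 1 \\ 0 & 0 \end{smallmatrix}\bigr)$, and $\alpha_t = \bigl(\begin{smallmatrix} 1 & 0 \\ 0 & t \end{smallmatrix}\bigr)$. An isomorphism $M_t \to M_s$ is a pair $(g_1, g_2) \in \GL_2 \times \GL_2$ with each $g_i$ centralising the Jordan block $\bigl(\begin{smallmatrix} 0 & 1 \\ 0 & 0 \end{smallmatrix}\bigr)$, hence of the form $\bigl(\begin{smallmatrix} a_i & b_i \\ 0 & a_i \end{smallmatrix}\bigr)$ with $a_i \in K^*$, and satisfying $g_2\alpha_tg_1^{-1} = \alpha_s$; a direct $2\times 2$ computation forces $a_1 = a_2$ and $s = t$, so the $M_t$ are pairwise non-isomorphic. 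The main obstacle is the finiteness proof for $\bfp = (1, n-1)$: the subtle point is the classification of indecomposable representations with $V_1 = K$ and $v \neq 0$, where one must show that $V_2$ is indecomposable as a $K[\beta_2]$-module, which relies on the explicit structure of $K[t]/(t^n)$-module automorphisms and a careful comparison of $t$-adic valuations.
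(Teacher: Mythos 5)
Your overall case structure (Jordan normal forms for $p=1$, symmetry for $(n-1,1)$, the $(2,2)$ family with Lemma~\ref{lem:red_induction} for $p=2$, Lemma~\ref{classLnilr} for $p\geq3$) and the $(2,2)$ family itself match the paper. The gap is in the $(1,n-1)$ case, where the paper invokes the enhanced nilpotent cone classification of \cite[Lemma~2.4]{AH} while you attempt a direct classification of indecomposables, and that classification is incorrect: it is \emph{not} true that an indecomposable representation with $V_1=K$ and $v=\alpha(1)\neq0$ must have $V_2$ indecomposable over $K[\beta_2]$. Take $V_2 = K[t]/(t^3)\oplus K[t]/(t)$, $\beta_2$ multiplication by $t$, $v=(t,1)$. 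Any decomposition $M=M'\oplus M''$ with $\dim V_1'=1$, $V_1''=0$ would force $V_2=V_2'\oplus V_2''$ as $K[t]$-modules with $v\in V_2'$; since $\dim_K K[t]v=2$ we cannot have $V_2'\cong K[t]/(t)$, while if $V_2'=K[t]w\cong K[t]/(t^3)$ with $w=(w',w'')$ and $w'$ a unit, then $v=p(t)w$ gives $p_0w''=1$, hence $p_0\neq0$, hence $p(t)w'$ a unit, contradicting $p(t)w'=t$. So this $M$ is indecomposable even though $V_2$ has Jordan type $(3,1)$, and it is missing from your list $\{S_1,T_\ell,P_{k,j}\}$.

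The unipotent base-change argument fails because the entry $s$ of $\bigl(\begin{smallmatrix}1&0\\ s&1\end{smallmatrix}\bigr)$ must itself be a $K[t]$-module homomorphism, and $\Hom_{K[t]}(K[t]/(t^a),K[t]/(t^b))=t^{\max(0,b-a)}K[t]/(t^{\min(a,b)})$ is in general a proper subspace of $K[t]/(t^{\min(a,b)})$; the ``division'' needed to cancel one component of $v$ need not land there. In the example above, killing the second component requires $st=-1$ in $K[t]/(t)$ (impossible, as $st=0$), and killing the first requires an element of $\Hom(K[t]/(t),K[t]/(t^3))=t^2K[t]/(t^3)$ equal to $-t$ (impossible). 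Finiteness for $(1,n-1)$ is nonetheless true, but it is exactly the content of \cite[Lemma~2.4]{AH} (orbits of the enhanced nilpotent cone parametrized by bipartitions) that the paper cites and that your direct argument was meant to replace.
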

\begin{proof}
Whenever $p\geq 3$, infinitely many orbits are obtained from Lemma \ref{classLnilr}, since $\mathfrak{n}_\pp \subseteq \N_\pp$.

Let $p=2$, then an infinite family of pairwise non-isomorphic representations of dimension vector $(2,2)$ is induced by
\begin{center}\begin{tikzpicture}[descr/.style={fill=white,inner sep=2.5pt}]
\matrix (m) [matrix of math nodes, row sep=2.01em,
column sep=3.5em, text height=1.5ex, text depth=0.1ex]
{ K^2 &K^2\\};

\path[->]
(m-1-1) edge node[above=0.05cm] {\scalebox{0.6}{$\begin{pmatrix}1&0\\0&t\end{pmatrix}$}} (m-1-2)
(m-1-1) edge [loop above=0.05cm] node{\scalebox{0.6}{$\begin{pmatrix}0&1\\0&0\end{pmatrix}$}} (m-1-1)
(m-1-2) edge [loop above=0.05cm] node{\scalebox{0.6}{$\begin{pmatrix}0&1\\0&0\end{pmatrix}$}} (m-1-2);
\end{tikzpicture}\end{center} 
and gives an infinite family of pairwise non-$L_P$-conjugate matrices in $\N_{\pp}$ right away. Thus, an infinite family is induced whenever two blocks are at least of size $2$ by Lemma \ref{lem:red_induction}.

Let $P$ be of block sizes $(1,n-1)$. The corresponding representations of $\Q_{L,2}$ are of the form
\begin{center}\begin{tikzpicture}[descr/.style={fill=white,inner sep=2.5pt}]
\matrix (m) [matrix of math nodes, row sep=2.01em,
column sep=3.5em, text height=1.5ex, text depth=0.1ex]
{ K &K^{n-1}\\};

\path[->]
(m-1-1) edge node[above=0.05cm] {\scalebox{0.6}{$f$}} (m-1-2)
(m-1-1) edge [loop above=0.05cm] node{\scalebox{0.6}{$\begin{pmatrix}0\end{pmatrix}$}} (m-1-1)
(m-1-2) edge [loop above=0.05cm] node{\scalebox{0.6}{$g$}} (m-1-2);
\end{tikzpicture}\end{center} 
where $g$ is nilpotent. We can decompose $K^{n-1}=\bigoplus_i V_i$ where $g$ acts on each $V_i$ as a regular nilpotent element. Let $x=f(1)$ and decompose $x=\sum_i x_i$ with $x_i\in V_i$. Assuming that $x_i\neq 0$, let $m_i$ be the maximal index such that $x_i\in g^{m_i}(V_i)$. Then, we can construct a basis of $V_i$ with $m_i$-th element $x_i$ such that the matrix of $g_{|V_i}$ is in Jordan canonical form, in the classical way. In other words, if $\lambda_1\geqslant \dots\geqslant \lambda_k$ is the partition corresponding to the Jordan decomposition of $g$, there are, up to isomorphism of representations of $\Q_{L,2}$, at most $\prod_i (\lambda_i+1)$ choices for $f$. Finiteness follows.


If $p=1$, then $P=\GL_n$ and the finite set of nilpotent Jordan normal forms classifies the orbits.
\end{proof}

\section{Covering techniques and $\Delta$-filtered representations}\label{sect:covering}
From now on, we will restrict all considerations to the actions of $P$ on $\N_{\pp}^{(x)}$ and in particular on $\N_{\pp}$. Let us call a parabolic subgroup $P$ \textit{representation-finite}, if its action on $\N_\pp$ admits only a finite number of orbits and \textit{representation-infinite}, otherwise. 

In this section, we firstly define a covering quiver $\widehat{\Q}_p$ of $\Q_p$ together with an admissible ideal. The study of this quiver provides a finiteness criterion for the whole category $\rep(\Q_p,I_x)$ in Proposition \ref{prop:reptypeGeneral}. The rest of the section is devoted to the study of analogues of $\rep^{\inj}(\Q_p,I_n)$ in the covering context.  In subsection \ref{ssect:inf_cases} this yields several infinite families of non-isomorphic indecomposable representations. In subsections \ref{delta}, \ref{deltatorsionpair} we make use of the theory of quasi-hereditary algebras and $\Delta$-filtered modules. This allows to relate some of our subcategories of modules of the form ``$\rep^{\inj}$'' to ``whole'' categories of representations of smaller quivers.

\subsection{The covering}
By techniques of Covering Theory \cite[\S 3]{Ga3}, it is useful to look at the covering algebra first. We sketch this idea and discuss first results, now. Note that our quiver algebras with relations should be thought as locally bounded $K$-categories in \cite{Ga3}.

In order to apply results of Covering Theory, we consider the infinite  universal covering quiver of $\A(p,x)$ (at the vertex $p$), which we call $\widehat{\Q}_p$:

\begin{center}\small\begin{tikzpicture}[descr/.style={fill=white}]
\matrix (m) [matrix of math nodes, row sep=3em,
column sep=3em, text height=0.4ex, text depth=0.1ex]
{& \vdots & \vdots &  \vdots & \vdots  & \vdots & \vdots  & \vdots \\
& \bullet & \bullet &  \bullet & \cdots  & \bullet & \bullet  & \bullet \\
\widehat{\Q}_p\colon & \bullet & \bullet &  \bullet & \cdots  & \bullet & \bullet  & \bullet \\
& \bullet & \bullet &  \bullet & \cdots  & \bullet & \bullet  & \bullet \\
& \vdots & \vdots &  \vdots & \vdots  & \vdots & \vdots  & \vdots \\
  };
\path[->]
(m-1-2) edge  (m-2-2)
(m-1-3) edge  (m-2-3)
(m-1-4) edge  (m-2-4)
(m-1-6) edge  (m-2-6)
(m-1-7) edge  (m-2-7)
(m-1-8) edge  (m-2-8)
(m-2-2) edge node[descr] {$\beta_1$} (m-3-2)
(m-2-3) edge  node[descr] {$\beta_2$}(m-3-3)
(m-2-4) edge  node[descr] {$\beta_3$}(m-3-4)
(m-2-6) edge  node[descr] {$\beta_{p-2}$}(m-3-6)
(m-2-7) edge node[descr] {$\beta_{p-1}$} (m-3-7)
(m-2-8) edge  node[descr] {$\beta_{p}$}(m-3-8)
(m-3-2) edge node[descr] {$\beta_1$} (m-4-2)
(m-3-3) edge  node[descr] {$\beta_2$}(m-4-3)
(m-3-4) edge  node[descr] {$\beta_3$}(m-4-4)
(m-3-6) edge  node[descr] {$\beta_{p-2}$}(m-4-6)
(m-3-7) edge node[descr] {$\beta_{p-1}$} (m-4-7)
(m-3-8) edge  node[descr] {$\beta_{p}$}(m-4-8)
(m-4-2) edge (m-5-2)
(m-4-3) edge  (m-5-3)
(m-4-4) edge (m-5-4)
(m-4-6) edge  (m-5-6)
(m-4-7) edge  (m-5-7)
(m-4-8) edge (m-5-8)
(m-2-2) edge node[above] {$\alpha_1$} (m-2-3)
(m-2-3) edge  node[above] {$\alpha_2$}(m-2-4)
(m-2-4) edge  node[above] {$\alpha_3$}(m-2-5)
(m-2-5) edge  node[above] {$\alpha_{p-3}$}(m-2-6)
(m-2-6) edge  node[above] {$\alpha_{p-2}$}(m-2-7)
(m-2-7) edge node[above] {$\alpha_{p-1}$} (m-2-8)
(m-3-2) edge node[above] {$\alpha_1$} (m-3-3)
(m-3-3) edge  node[above] {$\alpha_2$}(m-3-4)
(m-3-4) edge  node[above] {$\alpha_3$}(m-3-5)
(m-3-5) edge  node[above] {$\alpha_{p-3}$}(m-3-6)
(m-3-6) edge  node[above] {$\alpha_{p-2}$}(m-3-7)
(m-3-7) edge node[above] {$\alpha_{p-1}$} (m-3-8)
(m-4-2) edge node[above] {$\alpha_1$} (m-4-3)
(m-4-3) edge  node[above] {$\alpha_2$}(m-4-4)
(m-4-4) edge  node[above] {$\alpha_3$}(m-4-5)
(m-4-5) edge  node[above] {$\alpha_{p-3}$}(m-4-6)
(m-4-6) edge  node[above] {$\alpha_{p-2}$}(m-4-7)
(m-4-7) edge node[above] {$\alpha_{p-1}$} (m-4-8);\end{tikzpicture}\end{center} 
Let $\widehat{I}_x$ be the induced ideal, generated by all nilpotency relations (for the loops at the vertices) and all commutativity relations; we see that the fundamental group is given by $\mathbf{Z}$ which acts by vertical shifts. The universal covering algebra will be denoted by $\widehat{\A}(p,x):=K\widehat{\Q}_p/\widehat{I}_x$. The special case where $x=n$ is denoted by $\widehat{\A}(p)$. In order to classify the action of $P$ on $\N_\pp$, it is useful to study the category $\rep^{\inj}(\Q_p,I_n)$ (Lemma \ref{bijection}). The corresponding representation category for the covering algebra is given by those $\widehat{\A}(p)$-representations of which all horizontal maps are injective, we call it $\rep^{\inj}(\widehat{\Q}_p,\widehat{I}_n)$. 

The quiver $\widehat{\Q}_p$ is locally bounded, so that many results of Covering Theory \cite{Ga3} can be applied. The universal covering functor $F:\widehat{\A}(p,x)\rightarrow \A(p,x)$ induces a \textquotedblleft push-down\textquotedblright-functor between the representation categories $F_{\lambda}: \rep(\widehat{\Q}_p,\widehat{I}_x) \rightarrow \rep(\Q_p,I_x)$ \cite[\S3.2]{BoGa} which has the following nice properties: 
\begin{proposition}\label{prop:F_lambda}
\begin{enumerate}
\item $F_{\lambda}$ sends indecomposable non-isomorphic  representations, which are not $\mathbf{Z}$-translates, to indecomposable non-isomorphic  representations.
\item If $M$ has dimension vector $\dff=(d_{i,j})_{i\in\mathbf{Z}, 1\leq j\leq p}$, then $F_{\lambda}(M)$ has dimension vector $\dff'=(\sum_{i\in\mathbb{Z}} d_{i,j})_{1\leq j\leq p}$.
\item $F_{\lambda}\left(\rep^{\inj}(\widehat{\Q}_p,\widehat{I}_n)\right)\subseteq \rep^{\inj}(\Q_p,I_n)$.
\item Let $\dff$ and $\dff'$ be as in 2. Then $F_{\lambda}$ induces an injective 
linear map $R_{\dff}(\widehat{\Q}_{p}, \widehat{I}_n)\rightarrow R_{\dff'}(\Q_p,I_n)$. 
\end{enumerate}
\end{proposition}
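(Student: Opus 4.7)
The four statements are of rather different nature: (1) is the deep content and relies on Galois covering theory, while (2)--(4) are direct computations once the definition of $F_\lambda$ is unfolded. I will treat them in reverse order of difficulty.

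\medskip

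For part (2), I would simply unfold the definition of the push-down functor: for a vertex $j\in (\Q_p)_0$, one has $F_\lambda(M)_j=\bigoplus_{(i,j)\in F^{-1}(j)} M_{(i,j)}$, whose $K$-dimension is $\sum_{i\in\mathbb{Z}} d_{i,j}$ as claimed. For part (3), observe that for each horizontal arrow $\alpha_k$ of $\Q_p$ the induced map $F_\lambda(M)_{\alpha_k}$ is, by the very definition of $F_\lambda$, the direct sum $\bigoplus_{i\in\mathbb{Z}} M_{\alpha_{i,k}}$ of the corresponding maps in $M$; a direct sum of injective linear maps is injective. For part (4), once bases of $F_\lambda(M)_j$ compatible with the decomposition $\bigoplus_i M_{(i,j)}$ are fixed, the matrix of $F_\lambda(M)_{\alpha_k}$ is block-diagonal with blocks $M_{\alpha_{i,k}}$, and the matrix of $F_\lambda(M)_{\beta_k}$ is obtained by placing the blocks $M_{\beta_{i,k}}$ along the lower block-sub-diagonal (matching the shift induced by $\beta_k$ in the covering). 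In both cases the entries of $F_\lambda(M)$ are linear in the entries of $M$, and the image of $M$ determines each individual block $M_{\alpha_{i,k}}$ or $M_{\beta_{i,k}}$; hence the induced linear map on representation spaces is injective.

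\medskip

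The main point is part (1), for which I would invoke the general machinery of Galois coverings developed by Bongartz and Gabriel in \cite{BoGa,Ga3}. The fundamental group of the covering $F\colon\widehat{\A}(p,x)\to \A(p,x)$ is $\mathbb Z$, acting freely on the objects of $\widehat{\Q}_p$ by vertical shifts, and this action preserves the relations $\widehat{I}_x$. In this setting $F_\lambda$ is a \emph{covering functor} in Gabriel's sense and satisfies the key formula
\[
\Hom_{\A(p,x)}\!\bigl(F_\lambda M,\,F_\lambda N\bigr)\;\cong\;\bigoplus_{g\in\mathbb Z}\Hom_{\widehat{\A}(p,x)}(M,\,{}^gN)
\]
for every $M,N\in\rep(\widehat{\Q}_p,\widehat{I}_x)$ of finite support. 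Specializing to $M=N$ indecomposable and using that only finitely many summands in the right-hand side are nonzero, one deduces that $\End(F_\lambda M)$ is local (since ${}^gM\not\cong M$ for $g\neq 0$, as the support of $M$ is finite and the $\mathbb Z$-action is free on objects), so $F_\lambda M$ is indecomposable. The same formula applied to a pair $M,N$ of non-isomorphic indecomposables that are not $\mathbb Z$-translates of each other gives $\Hom(F_\lambda M,F_\lambda N)=0$ on the idempotent separating them, whence $F_\lambda M\not\cong F_\lambda N$.

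\medskip

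The main obstacle is part (1); the rest is bookkeeping. The delicate point there is to check that the hypotheses needed to quote the results of \cite{BoGa,Ga3}--namely local boundedness of $\widehat{\A}(p,x)$, freeness of the $\mathbb Z$-action on vertices, and compatibility with the relations--are fulfilled in our situation. Local boundedness is clear from the picture of $\widehat{\Q}_p$; freeness holds because the vertical translation moves every vertex to a distinct one; and compatibility with $\widehat{I}_x$ holds because both the nilpotency relations and the commutativity relations are $\mathbb Z$-invariant by construction. With these verifications in hand, the cited results apply verbatim and yield (1).
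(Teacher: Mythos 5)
Your proposal follows essentially the same route as the paper, which simply cites Gabriel's Lemma~3.5 from \cite{Ga3} for part (1) and refers to the construction of $F_\lambda$ in \cite{BoGa} for parts (2)--(4); you spell out the computations for (2)--(4) and reconstruct the covering-theory argument for (1). One small imprecision in your treatment of (1): the Hom formula does \emph{not} give $\Hom(F_\lambda M, F_\lambda N)=0$ when $M,N$ are non-isomorphic non-translates -- the space $\bigoplus_{g\in\mathbb Z}\Hom(M,{}^gN)$ can well be nonzero. The correct conclusion is that no element of this sum can be an isomorphism: any isomorphism $\phi\colon F_\lambda M\to F_\lambda N$ together with its inverse would, via the composition law of the Hom formula and locality of $\End(F_\lambda M)$, force some component $M\to{}^gN$ to be an isomorphism, contradicting the hypothesis. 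This is the same locality argument you already used for indecomposability, so it is a slip of phrasing rather than a gap in the idea.
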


\begin{proof}
The first property follows from \cite[Lemma~3.5]{Ga3}. The other three are clear from the construction of $F_{\lambda}$ in  \cite[\S3.2]{BoGa}.
\end{proof}

By \cite[Theorem 3.6]{Ga3}, if the algebra $\widehat{\A}(p,x)$ is locally representation-finite (that is, for each vertex $y$, the number of $\widehat{\A}(p,x)$-representations $M$ with $M_y\neq \{0\}$ is finite), then the algebra $\A(p,x)$ is representation-finite. We can use the BHV-list (see, for example, \cite{HaVo}) in order to find infinitely many non-isomorphic indecomposable $\widehat{\A}(p,x)$-representations and this yields infinitely many non-isomorphic indecomposable $\A(p,x)$-representations via the functor $F_{\lambda}$.

It is also useful to define a truncated version of $\widehat{\Q}_p$.

\begin{center}\small\begin{tikzpicture}[descr/.style={fill=white}]
\matrix (m) [matrix of math nodes, row sep=1.5em,
column sep=1.5em, text height=1ex, text depth=0.1ex]
{& \bullet & \bullet &  \bullet & \cdots  & \bullet & \bullet  & \bullet \\
& \vdots & \vdots &  \vdots & \vdots  & \vdots & \vdots  & \vdots \\
\widehat{\Q}_{p,n}\colon & \bullet & \bullet &  \bullet & \cdots  & \bullet & \bullet  & \bullet \\
 & \bullet & \bullet &  \bullet & \cdots  & \bullet & \bullet  & \bullet \\
& \vdots & \vdots &  \vdots & \vdots  & \vdots & \vdots  & \vdots \\
& \bullet & \bullet &  \bullet & \cdots  & \bullet & \bullet  & \bullet \\
  };
\path[->]
(m-1-2) edge  (m-2-2)
(m-1-3) edge  (m-2-3)
(m-1-4) edge  (m-2-4)
(m-1-6) edge  (m-2-6)
(m-1-7) edge  (m-2-7)
(m-1-8) edge  (m-2-8)
(m-2-2) edge  (m-3-2)
(m-2-3) edge  (m-3-3)
(m-2-4) edge  (m-3-4)
(m-2-6) edge  (m-3-6)
(m-2-7) edge  (m-3-7)
(m-2-8) edge  (m-3-8)
(m-3-2) edge  (m-4-2)
(m-3-3) edge (m-4-3)
(m-3-4) edge  (m-4-4)
(m-3-6) edge  (m-4-6)
(m-3-7) edge  (m-4-7)
(m-3-8) edge  (m-4-8)
(m-4-2) edge (m-5-2)
(m-4-3) edge  (m-5-3)
(m-4-4) edge (m-5-4)
(m-4-6) edge  (m-5-6)
(m-4-7) edge  (m-5-7)
(m-4-8) edge (m-5-8)
(m-5-2) edge (m-6-2)
(m-5-3) edge  (m-6-3)
(m-5-4) edge (m-6-4)
(m-5-6) edge  (m-6-6)
(m-5-7) edge  (m-6-7)
(m-5-8) edge (m-6-8)
(m-1-2) edge  (m-1-3)
(m-1-3) edge  (m-1-4)
(m-1-4) edge  (m-1-5)
(m-1-5) edge  (m-1-6)
(m-1-6) edge  (m-1-7)
(m-1-7) edge  (m-1-8)
(m-3-2) edge (m-3-3)
(m-3-3) edge  (m-3-4)
(m-3-4) edge  (m-3-5)
(m-3-5) edge  (m-3-6)
(m-3-6) edge (m-3-7)
(m-3-7) edge (m-3-8)
(m-4-2) edge (m-4-3)
(m-4-3) edge  (m-4-4)
(m-4-4) edge  (m-4-5)
(m-4-5) edge  (m-4-6)
(m-4-6) edge  (m-4-7)
(m-4-7) edge  (m-4-8)
(m-6-2) edge (m-6-3)
(m-6-3) edge  (m-6-4)
(m-6-4) edge  (m-6-5)
(m-6-5) edge  (m-6-6)
(m-6-6) edge  (m-6-7)
(m-6-7) edge  (m-6-8);\end{tikzpicture}\end{center} 
of $n$ rows and $p$ columns. Define $\widehat{I}_{p,n}(x)$ to be the ideal generated by all commutativity relations and by the relation that the composition of $x$ vertical maps equals zero. 
Further, define $\widehat{\A}(p,x)_n:= K\widehat{\Q}_{p,n}/\widehat{I}_{p,n}(x)$. Representations are given as tuples $(M_{k,l})_{1\leq k\leq n, 1\leq l\leq p}$ together with maps $\alpha_{k,l}: M_{k,l}\rightarrow M_{k,l+1}$ and $\beta_{k,l}: M_{k,l}\rightarrow M_{k+1,l}$ fulfilling $ \beta_{k,l+1} \circ \alpha_{k,l}=   \alpha_{k+1,l} \circ \beta_{k,l}$. If $x=n$, then define $\widehat{\A}(p)_n:=\widehat{\A}(p,x)_n$ and $\widehat{I}_{p,n}:=\widehat{I}_{p,n}(x)$.

We decide representation-finiteness concretely for $\A(p,x)$ below. In case the universal covering algebra is locally representation-finite, the algebra $\A(p,x)$ is representation-finite, as well \cite{Ga3}. If the covering algebra is representation-infinite, then $\A(p,x)$ is representation-infinite as well via the functor $F_{\lambda}$. Note that the following result has been stated in \cite{GLS}, we include it with a detailed proof for completeness.

\begin{proposition}\label{prop:reptypeGeneral}
 The algebra $\A(p,x)$ is representation-finite if and only if $p=1$ or $x=1$ or $(p,x)\in\{(2,2), (2,3), (3,2)\}$.

\end{proposition}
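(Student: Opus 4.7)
The plan is to split the equivalence into two directions and reduce each to a small number of minimal cases via a monotonicity observation. Whenever $p\leq p'$ and $x\leq x'$, the inclusion $I_{x'}\subseteq I_x$ (the relation $\beta^x=0$ is stronger than $\beta^{x'}=0$) together with extension by zero at the extra vertices yields a full and exact embedding $\rep(\Q_p,I_x)\hookrightarrow \rep(\Q_{p'},I_{x'})$. Such an embedding sends non-isomorphic indecomposables to non-isomorphic indecomposables. Hence $\A(p',x')$ representation-finite forces $\A(p,x)$ representation-finite, and conversely $\A(p,x)$ representation-infinite forces $\A(p',x')$ representation-infinite. Consequently, it suffices to show that the two maximal ``finite'' pairs $(2,3)$ and $(3,2)$ are representation-finite (whence also $(2,2)$ by monotonicity), and that the three minimal ``infinite'' pairs $(2,4)$, $(3,3)$, $(4,2)$ are representation-infinite (whence all further pairs outside the list).

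For the trivial finite cases, $\A(1,x)\cong K[\beta]/(\beta^x)$ is a truncated polynomial ring with exactly $x$ indecomposable modules, and $\A(p,1)\cong KA_p$ is the path algebra of the Dynkin quiver $A_p$, representation-finite by Gabriel's theorem. For the three small pairs $(p,x)\in\{(2,2),(2,3),(3,2)\}$, I plan to invoke the covering machinery as a finiteness tool: by \cite[Theorem~3.6]{Ga3}, $\A(p,x)$ is representation-finite as soon as $\widehat{\A}(p,x)$ is locally representation-finite. In these three cases, the covering quivers with relations are narrow enough that the Auslander-Reiten quiver of $\widehat{\A}(p,x)$ through any given vertex can be computed explicitly via the knitting procedure of \cite[IV.4]{ASS}; the verification amounts to checking that knitting terminates after finitely many steps and that the outcome is periodic under the $\Z$-action (so that every vertex is met by only finitely many indecomposables up to translation).

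For the three minimal infinite cases, I exploit Proposition \ref{prop:F_lambda}. For each pair $(p,x)\in\{(2,4),(3,3),(4,2)\}$ the plan is to exhibit a full connected subquiver with relations of $\widehat{\Q}_p$, supported on finitely many vertices and inheriting the commutativity and nilpotency relations of $\widehat{I}_x$, which appears on the Bongartz--Happel--Vossieck list. The nullroot attached to such a BHV entry then provides a one-parameter family of pairwise non-isomorphic indecomposable representations of $\widehat{\A}(p,x)$ of a fixed dimension vector, supported inside a single fundamental domain of the $\Z$-action. Hence no two members of the family are related by a $\Z$-translate, and parts (1) and (2) of Proposition \ref{prop:F_lambda} guarantee that their images under the push-down functor $F_\lambda$ are pairwise non-isomorphic indecomposable representations of $\A(p,x)$, establishing representation-infiniteness.

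The main obstacle is the combinatorial search for the correct BHV subquivers in the three minimal infinite cases. The ideal $\widehat{I}_x$ imposes commutativity on every square of the grid, so the relations inherited by any subquiver are strongly constrained, and one must identify patches of the truncated grid $\widehat{\Q}_{p,x}$ whose induced relations coincide exactly with those of a BHV entry (which typically involves a zero-relation on a short path rather than a full commutation square). A secondary delicate point is termination of the knitting procedure for $(2,3)$ and $(3,2)$: although the underlying algebras are small, the loops unfold into long vertical chains on the cover, and the bookkeeping is tedious though routine.
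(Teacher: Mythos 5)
Your proposal is correct and follows essentially the same route as the paper: handle $p=1$ and $x=1$ directly, establish the finite cases $(2,2),(2,3),(3,2)$ by showing the universal cover $\widehat{\A}(p,x)$ is locally representation-finite via knitting and Gabriel's covering theorem, and obtain infiniteness in the remaining cases by locating BHV subquivers inside $\widehat{\Q}_{p,n}$ with the induced relations and pushing the resulting one-parameter families down through $F_\lambda$. Your explicit monotonicity reduction to the maximal finite pairs and minimal infinite pairs is a clean way to organize what the paper leaves implicit (its three BHV patterns are simply annotated with the ranges $p\geq 4,x\geq 2$, etc.), and your remark that the BHV family lives on a fixed finite patch so its members are never $\Z$-translates is the small justification the paper's brief statement suppresses.
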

\begin{proof}
Let $p=1$ and $x$ be arbitrary. In this case, the indecomposable representations are (up to isomorphism) induced by the Jordan normal forms of $x$-nilpotent matrices. Therefore, there are only finitely many isomorphism classes of indecomposables, corresponding to single Jordan blocks of size at most $x$.

Let  $x=1$ and $p\geq 2$. In this case, the classification translates to an $A_p$-classification case. Thus, there are only finitely many isomorphism classes of indecomposables.

Let $p=2 $ and $x= 2$. We show that the algebra $\widehat{\A}(2,2)$ is locally representation-finite and this implies that $\A(2,2)$ is representation-finite. Let $M$ be an indecomposable finitely-generated $\widehat{\A}(2,2)$-representation. Then $M$ can be seen as a representation of $\widehat{Q}_{p,n}$ for some sufficiently large $n$. 

By knitting, we can compute the Auslander-Reiten quiver of $\widehat{\A}(2,2)_n$. It turns out that it has a middle part which is repeated in a cyclic way up to a shift by the $\mathbf{Z}$-action. Its inital, middle and terminal part are depicted in Appendix \ref{app:p2x2}. There, we denote by $M^{(i)}$ the $\widehat{\A}(2,2)_n$-representation obtained by $i$-times shifting the $\widehat{\A}(2,2)_h$-representation ($h\in\{1,2,3\}$) $M$  from bottom to top. We see that, up to $\mathbf{Z}$-action, each of the isomorphism classes of indecomposables of $\widehat{\A}(2,2)_n$ have a representant in the middle part of the Auslander Reiten quiver. 
As a consequence, the same holds for the representations of $\widehat{\A}(2,2)$, so the algebra is locally representation-finite.
%
 
Let $p=3$ and $x=2$. In the same manner as in the case $p=2$, $x=2$, one shows that  $\widehat{\A}(3,2)$ is locally representation-finite. The middle part of the Auslander-Reiten-quiver of  $\widehat{\A}(3,2)_n$ is depicted in the Appendix \ref{app:p3x2}.

Let $p=2 $ and $x=3$. Again as in the two previous cases, one shows that  $\widehat{\A}(2,3)$ is locally representation-finite. The middle part of the Auslander-Reiten-quiver of  $\widehat{\A}(2,3)_n$ is
depicted in the Appendix \ref{app:p2x3}.

For each remaining case, we find a full subquiver of the quiver $\widehat{\Q}_{p,n}$ in the BHV-list (see below for the concrete quivers), which fulfills the relations induced by $\widehat{I}_{p,n}(x)$. Via the functor $F_{\lambda}$, we obtain infinite families of non-isomorphic representations for each such algebra $\A(p,x)$. 
\begin{center}
\begin{tabular}{|c|c|c|}
\hline
\small\begin{tikzpicture}[descr/.style={fill=white}]
\matrix (m) [matrix of math nodes, row sep=0.9em,
column sep=0.9em, text height=0.2ex, text depth=0.1ex]
{ &  &  &    \bullet_1   \\
& \bullet_2 &  \bullet_3  & \bullet_2    \\
\bullet_2 & \bullet_3 &  \bullet_2  &     \\
\bullet_1 &  &    &     \\  };
\path[->]
(m-1-4) edge (m-2-4)
(m-2-2) edge (m-2-3)
(m-2-3) edge (m-2-4)
(m-2-2) edge  (m-3-2)
(m-2-3) edge  (m-3-3)
(m-3-1) edge  (m-3-2)
(m-3-2) edge (m-3-3)
(m-3-1) edge (m-4-1)
(m-2-2) edge[-,dotted] (m-3-3)
;\end{tikzpicture} 
&
\small\begin{tikzpicture}[descr/.style={fill=white}]
\matrix (m) [matrix of math nodes, row sep=0.9em,
column sep=0.9em, text height=0.2ex, text depth=0.1ex]
{ & \bullet_1  \\
\bullet_1 & \bullet_2  \\
\bullet_2 & \bullet_2  \\
\bullet_2 & \bullet_1  \\
\bullet_1 &   \\  };
\path[->]
(m-2-1) edge (m-2-2)
(m-3-1) edge (m-3-2)
(m-4-1) edge  (m-4-2)
(m-2-1) edge  (m-3-1)
(m-3-1) edge (m-4-1)
(m-4-1) edge (m-5-1)
(m-1-2) edge  (m-2-2)
(m-2-2) edge (m-3-2)
(m-3-2) edge (m-4-2)
(m-2-1) edge[-,dotted] (m-3-2)
(m-3-1) edge[-,dotted] (m-4-2)
;\end{tikzpicture}
&
\small\begin{tikzpicture}[descr/.style={fill=white}]
\matrix (m) [matrix of math nodes, row sep=0.9em,
column sep=0.9em, text height=0.2ex, text depth=0.1ex]
{ & \bullet_1 &  \bullet_1      \\
\bullet_1 & \bullet_2 &  \bullet_1     \\
\bullet_1 &  \bullet_1&       \\  };
\path[-]
(m-1-2) edge (m-1-3)
(m-1-2) edge (m-2-2)
(m-1-3) edge (m-2-3)
(m-3-1) edge  (m-3-2)
(m-2-2) edge  (m-2-3)
(m-2-1) edge  (m-2-2)
(m-2-1) edge (m-3-1)
(m-2-2) edge (m-3-2)
(m-1-2) edge[-,dotted] (m-2-3)
(m-2-1) edge[-,dotted] (m-3-2)
;\end{tikzpicture} 
\\
\hline
$p\geq 4$ and $x\geq 2$ & $p\geq 2 $ and $x\geq 4$ & $p\geq 3$ and $x\geq 3$\\
\hline
\end{tabular}\end{center}
\qedhere
\end{proof}

Note that this is a very general representation-theoretic approach to understand the algebra $\A(p,x)$ better. In order to solve our classification problem, it is not sufficient, since the found representations do not necessarily come up in the classification: they might not have injective maps corresponding to all arrows $\alpha_1$, ..., $\alpha_p$.

From now on, we focus on the case $x=n$.

\subsection{Infinite actions via covering}\label{ssect:inf_cases} 

Via the covering functor, every representation in $\rep^{\inj}(\widehat{\Q}_{p,n},\widehat{I}_{p,n})$ of a convenient dimension vector induces a representation in $\rep^{\inj}(\Q_p,I_n)(\dfp)$.
In order to examine the number of isomorphism classes in  $\rep^{\inj}(\Q_p,I_n)(\dfp)$, we can begin by considering isomorphism classes in  $\rep^{\inj}(\widehat{\Q}_{p,n},\widehat{I}_{p,n})$ of ``expanded'' dimension vectors, which sum up to $\dfp$, thus.

\begin{proposition}\label{prop:class_inf}
The number of $P$-orbits in $\N_{\pp}$ is infinite if $\bv_P=(b_1,\dots,b_p)$ or $\bv_{({}^t\!P)}=(b_p,\dots,b_1)$ appears in Figure \ref{app:inf_case_diag}. 
\end{proposition}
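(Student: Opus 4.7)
The plan is to reduce to a few minimal block-size vectors from Figure \ref{app:inf_case_diag} and, for each, exhibit an explicit one-parameter family of pairwise non-isomorphic representations in $\rep^{\inj}(\Q_p,I_n)(\dfp)$; Lemma \ref{bijection} then translates these to an infinite family of $P$-orbits in $\N_\pp$. By Lemma \ref{lem:red_symmetry} and Lemma \ref{lem:red_induction} it suffices to handle the minimal entries of Figure \ref{app:inf_case_diag}, i.e.\ those that are not $\leq_c$-dominated by any other listed entry. From the three conditions of the Main Theorem these include (up to reversal) the $6$-block vector $(1,1,1,1,1,1)$, the critical $3$-block configurations with three blocks of size $\geq 2$ (for instance $(2,2,2)$), the $2$-block configurations of shape $(a,b)$ with $a,b\geq 6$, together with any remaining boundary configurations in the diagram.

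For each critical case, the strategy is to work inside the covering quiver $\widehat{\Q}_{p,n}$ for $n$ large enough. I would locate a full subquiver of BHV type together with a nullroot dimension vector whose support is compatible with the commutativity relations of $\widehat{I}_{p,n}$ (and, in the case $x=n$, with the vertical nilpotency relation). This produces a one-parameter family $(\widehat{M}_t)_{t\in K}$ of pairwise non-isomorphic indecomposable $\widehat{\A}(p)_n$-representations, much as in the last part of the proof of Proposition \ref{prop:reptypeGeneral}. By Proposition \ref{prop:F_lambda}(1)--(2), the push-downs $F_\lambda(\widehat{M}_t)$ form a one-parameter family of pairwise non-isomorphic indecomposable $\A(p,n)$-representations whose column sums give the correct dimension vector up to a correction.

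The main obstacle, flagged by the authors immediately after Proposition \ref{prop:reptypeGeneral}, is that such push-downs need not have injective $\alpha_i$-maps and therefore may fail to lie in $\rep^{\inj}(\Q_p,I_n)$. I would resolve this by enlarging each $\widehat{M}_t$ by a fixed direct summand $C$ built from thin \emph{staircase} modules on $\widehat{\Q}_{p,n}$ chosen so that (i) $C$ itself lies in $\rep^{\inj}(\widehat{\Q}_{p,n},\widehat{I}_{p,n})$, (ii) the column dimensions of $\widehat{M}_t\oplus C$ equal $\dfp$, and (iii) at each vertex of $\Q_p$ the total horizontal map becomes injective. By Krull--Schmidt together with Proposition \ref{prop:F_lambda}(1), the family $(\widehat{M}_t\oplus C)_{t\in K}$ remains pairwise non-isomorphic, and by Proposition \ref{prop:F_lambda}(3) its push-down lies in $\rep^{\inj}(\Q_p,I_n)(\dfp)$. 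Verifying clauses (i)--(iii) for each minimal case is where the bulk of the ad-hoc bookkeeping lies, and is the step I expect to be the most delicate.

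Finally, because every matrix entry in the construction can be chosen to lie in $\{0,1,t\}$, Remark \ref{rk:non-alg-closed} ensures that non-conjugacy over $\overline{K}$ descends to non-conjugacy over any infinite field $K$. Lemmas \ref{lem:red_induction} and \ref{lem:red_symmetry} then propagate the resulting infiniteness of the $P$-action on $\N_\pp$ from the minimal cases to every configuration listed in Figure \ref{app:inf_case_diag}, completing the proof.
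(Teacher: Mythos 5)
Your strategy of enlarging $\widehat{M}_t$ by a fixed direct summand $C$ to repair non-injectivity of the horizontal maps has a genuine flaw: direct summation cannot turn a non-injective map into an injective one. If $(\widehat{M}_t)_{\alpha}$ has nonzero kernel at some covering arrow, then $(\widehat{M}_t\oplus C)_{\alpha}=(\widehat{M}_t)_{\alpha}\oplus C_{\alpha}$ still has nonzero kernel, since $\ker(f\oplus g)=\ker f\oplus \ker g$; the push-down $F_{\lambda}$ only reorganizes these direct sums over the fibre, so $F_{\lambda}(\widehat{M}_t\oplus C)$ still fails to lie in $\rep^{\inj}(\Q_p,I_n)$. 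Your clause (iii) simply cannot be satisfied by a direct-summand correction whenever the starting family already has non-injective $\alpha$-maps, which is exactly the obstruction you flagged yourself.

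The paper circumvents this differently: rather than first producing an arbitrary BHV family and then trying to repair injectivity, the authors build the one-parameter families so that they are \emph{already} in $\rep^{\inj}(\widehat{\Q}_{p,n},\widehat{I}_{p,n})$. Concretely, they embed the tame one-parameter families of $\widetilde{D}_4$ and $\widetilde{E}_6$ into the covering quiver with carefully chosen orientations and extra thin scaffolding (Figures \ref{fig:D4} and \ref{fig:E6}), and they verify injectivity a priori in Claim \ref{cla:inj}: if an $\alpha$-map from a smaller space to a larger one had a kernel, and the source vertex received no incoming arrows, the kernel would split off as a subrepresentation and contradict indecomposability; a similar two-step argument handles the $2\to 3$ arrows. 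So the injectivity is not restored after the fact but forced from the outset. This is precisely the step you defer as ``the most delicate,'' and the remedy you propose for it does not work; you would need to replace the direct-summand idea by the paper's direct construction inside $\rep^{\inj}$. The remainder of your outline (push-down via $F_{\lambda}$, dimension-vector bookkeeping, propagation by Lemmas \ref{lem:red_subgroup}, \ref{lem:red_induction}, \ref{lem:red_symmetry}, and the descent to arbitrary infinite fields via Remark \ref{rk:non-alg-closed}) matches the paper's argument.
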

\begin{proof}
First, let us assume that $K$ is algebraically closed. We begin by proving infiniteness for the minimal cases (painted in Diagram \ref{app:inf_case_diag}). This is done by pointing out some infinite families in $\rep^{\inj}(\widehat{Q}_{p,n},\widehat{I}_{p,n})$. This provides some infinite families in $\rep^{\inj}(Q_{p}, I_{p})$ thanks to Proposition \ref{prop:F_lambda} and the result follows in these cases by Lemma \ref{bijection}.

Then the use of Lemmas \ref{lem:red_subgroup}, \ref{lem:red_induction} is depicted in Diagram \ref{app:inf_case_diag} to produce further infinite cases. The corresponding symmetric cases are infinite by Lemma \ref{lem:red_symmetry}. Induced by the quiver $\widetilde{D}_4$, we find an infinite family for block sizes $(2,2,2)$, see Figure \ref{fig:D4}.
\begin{figure}
\begin{center}
\begin{tabular}{cc}
\begin{tikzpicture}[->,>=stealth',shorten >=1pt,auto,node distance=3cm,
  thick,main node/.style={
  node distance={6ex}, minimum size=2cm,
  font=\sffamily\small\bfseries,minimum size=15mm}]
\node[main node] (1b){1};
\node[main node] (2)[below of=1b]{2};
\node[main node] (1a)[left of=2]{1}; 
\node[main node] (1c)[below of=2]{1};
\node[main node] (1d)[right of=2]{1};

\path[-, draw] (1a) -- node[above]{$a$} (2)-- node{$d$} (1d);
\path[-,draw] (1b)-- node[right]{$b$} (2) --node {$c$} (1c);
\end{tikzpicture}
&

\begin{tikzpicture}[->,>=stealth',shorten >=1pt,auto,node distance=3cm,
  thick,main node/.style={
  node distance={6ex}, minimum size=2cm,
  font=\sffamily\small\bfseries,minimum size=15mm}]
\node[main node] (1b){1};
\node[main node] (2)[below of=1b]{2};
\node[main node] (1a)[left of=2]{1}; 
\node[main node] (1c)[below of=2]{1};
\node[main node] (1c1)[left of=1c]{1};
\node[main node] (1c2)[right of=1c]{1};
\node[main node] (2p)[right of=2]{2};
\node[main node] (2pp)[right of=1b]{2};
\node[main node] (1d)[above of=2pp]{1};

\path[<-,draw] 
(1a) edge node[above]{$a$} (2)
 (2) edge node{$c$} (1c)
(1b) edge node[right]{$b$} (2)
(1b) edge node[above]{$b$} (2pp)
(1d) edge node[right]{$d$} (2pp)
(1a) edge node {$ca$} (1c1)
(2p) edge node[right]{$c$} (1c2) ;
\path[-, double,draw] 
(1c1)--(1c) --(1c2)
(2) -- (2p) --(2pp);
\end{tikzpicture}
\end{tabular}
\vspace{-1cm}
\end{center}
\caption{Infinite family induced by $\widetilde{D}_4$, $\bv=(2,2,2)$}\label{fig:D4}
\end{figure}
The remaining cases can be deduced from the tame quiver $\widetilde{E}_6$ and are depicted in Figure \ref{fig:E6}. For these to be admissible, there remains to make the following observation:

\begin{claim}\label{cla:inj} Whenever $a,b,c$ or $e$ are oriented from a smaller space to a bigger one in the $\widetilde{D}_4$- or $\widetilde{E}_6$-representations of Figures \ref{fig:D4} and \ref{fig:E6}, then these maps are injective maps. In particular, all the corresponding families belong to $\rep^{\inj}(\widehat{Q}_{p,n},\widehat{I}_{p,n})$.
\end{claim} 

\begin{proof}The case $1\stackrel{\alpha}{\rightarrow}2$ ($\alpha\in \{a,b,c,d,e\}$), such that no arrow ends in $1$. In this case, the kernel of $\alpha$ is a subrepresentation and any complement is a submodule. The result then follows since our one-parameter family is made of indecomposables.

The second case is $2\stackrel{b}{\rightarrow}3$ in the $\widetilde{E_6}$-case. The orientation is given by $1\stackrel{a}{\rightarrow}2\stackrel{b}{\rightarrow}3$ (note that our claim also holds true for $1\stackrel{a}{\leftarrow}2\stackrel{b}{\rightarrow}3$). By our previous considerations, $a$ is injective. Then $\Ker(b\circ a)\stackrel{a}{\rightarrow} \Ker (b)$ is a subrepresentation which has a complement being a submodule. Once again, the indecomposability allows us to conclude.\qedhere
\end{proof}

\qedhere
\end{proof}
\begin{figure}
\begin{tabular}{cc}
\scalebox{0.6}{\begin{tikzpicture}[->,>=stealth',shorten >=1pt,auto,node distance=3cm,
  thick,main node/.style={
  node distance={6ex}, minimum size=2cm,
  font=\sffamily\small\bfseries,minimum size=15mm}]
\node[main node] (1b){1};
\node[main node] (2b)[below of=1b]{2};
\node[main node] (3)[below of=2b]{3}; 
\node[main node] (2a)[left of=3]{2};
\node[main node] (1a)[left of=2a]{1};
\node[main node] (2c)[below of=3]{2};
\node[main node] (1c)[below of=2c]{1};

\path[-, draw] (1a) -- node {$a$} (2a)-- node{$b$} (3)-- node {$d$}(2b)-- node {$c$} (1b);
\path[-,draw] (1c)-- node {$e$} (2c) --node {$f$} (3);
\end{tikzpicture}}
&
\scalebox{0.6}{\begin{tabular}{|c|c|c|}
\hline
\begin{tikzpicture}[->,>=stealth',shorten >=1pt,auto,
  thick,main node/.style={
 node distance={6ex}, 
  font=\sffamily\small\bfseries,minimum size=15mm}]
\node[main node] (1b){1};
\node[main node] (2b)[below of=1b]{2};
\node[main node] (3)[below of=2b]{3}; 
\node[main node] (3p)[below of=3]{3};
\node[main node] (2c)[below of=3p]{2};
\node[main node] (1c)[below of=2c]{1};
\node[main node] (2c2)[left of=2c]{2};
\node[main node] (1c2)[left of=1c]{1};
\node[main node] (1a)[left of=3]{1};
\node[main node] (2a)[left of=3p]{2};

\path[<-,draw] 
(1a) edge node {$a$} (2a)
(2a) edge node {$b$} (3p)
(2b) edge node[right] {$d$}(3)
(1b) edge node[right] {$c$} (2b)
(2c2) edge node {$e$} (1c2) 
(2c) edge node[right] {$e$} (1c) 
(2a) edge node{$fb$} (2c2)
(1a) edge node{$ba$}(3)
(3p) edge node[right] {$f$} (2c);
\path[-, double,draw] 
(2c2) -- (2c)
(3) -- (3p)
(1c2) --(1c);

\end{tikzpicture}
&
\begin{tikzpicture}[->,>=stealth',shorten >=1pt,auto,
  thick,main node/.style={
 node distance={6ex}, 
  font=\sffamily\small\bfseries,minimum size=15mm}]
\node[main node] (1b){1};
\node[main node] (2b)[below of=1b]{2};
\node[main node] (3)[below of=2b]{3}; 
\node[main node] (2a)[left of=3]{2};
\node[main node] (1a)[left of=2a]{1};
\node[main node] (2c)[below of=3]{2};
\node[main node] (1c)[below of=2c]{1};
\node[main node] (2a2)[left of=2c]{2};
\node[main node] (2a1)[left of=2a2]{2};
\node[main node] (1a2)[left of=1c]{1};
\node[main node] (1a1)[left of=1a2]{1};

\path[<-,draw] 
(1a) edge node[above] {$a$} (2a)
(2a) edge node[above] {$b$} (3)
(2b) edge node[right] {$d$}(3)
(1b) edge node[right] {$c$} (2b)
(2c) edge node {$e$} (1c) 
(3) edge node {$f$} (2c)
(1a) edge node{$fba$} (2a1)
(2a) edge node{$fb$} (2a2)
(2a1) edge node{$e$} (1a1)
(2a2) edge node{$e$} (1a2);
\path[-, double,draw] (2c)-- (2a2) --(2a1);
\path[-, double,draw] (1c)-- (1a2) --(1a1);
\end{tikzpicture} 
&
\begin{tikzpicture}[->,>=stealth',shorten >=1pt,auto,
  thick,main node/.style={
 node distance={6ex}, 
  font=\sffamily\small\bfseries,minimum size=15mm}]
\node[main node] (1b){1};
\node[main node] (2b)[below of=1b]{2};
\node[main node] (3)[below of=2b]{3}; 
\node[main node] (3p)[below of=3]{3};
\node[main node] (2c)[below of=3p]{2};
\node[main node] (2c2)[left of=2c]{2};
\node[main node] (1c)[left of=2c2]{1};
\node[main node] (1a)[left of=3]{1};
\node[main node] (2a)[left of=3p]{2};

\path[<-,draw] 
(1a) edge node {$a$} (2a)
(2a) edge node {$b$} (3p)
(2b) edge node[right] {$d$}(3)
(1b) edge node[right] {$c$} (2b)
(1c) edge node {$e$} (2c2) 
(2a) edge node{$fb$} (2c2)
(1a) edge node{$ba$}(3)
(3p) edge node[right] {$f$} (2c);
\path[-, double,draw] 
(2c2) -- (2c)
(3) -- (3p);

\end{tikzpicture}
\\
\hline
$\bv=(6,6)$ & $\bv=(4,1,4)$ & $\bv=(1,4,6)$\\
\hline
\begin{tikzpicture}[->,>=stealth',shorten >=1pt,auto,
  thick,main node/.style={
 node distance={6ex}, 
  font=\sffamily\small\bfseries,minimum size=15mm}]
\node[main node] (1b){1};
\node[main node] (2b)[right of=1b]{2};
\node[main node] (3)[below of=1b]{3}; 
\node[main node] (30)[right of=3]{3};
\node[main node] (1a)[left of=3]{1};
\node[main node] (3p)[below of=3]{3};
\node[main node] (3p0)[right of=3p]{3};
\node[main node] (2a)[left of=3p]{2};
\node[main node] (2c)[below of=3p]{2};
\node[main node] (2cp)[left of=2c]{2};
\node[main node] (2c0)[right of=2c]{2};
\node[main node] (1c)[left of=2cp]{1};

\path[<-,draw] 
(1a) edge node[left] {$a$} (2a)
(2a) edge node[below] {$b$} (3p)
(2b) edge node[right] {$d$}(30)
(1b) edge node[right] {$dc$}(3)
(1b) edge node[above] {$c$} (2b)
(1c) edge node {$e$} (2cp) 
(3p) edge node[right]{$f$} (2c)
(3p0) edge node[right]{$f$} (2c0)
(1a) edge node[above]{$ba$} (3)
(2a) edge node{$fb$} (2cp);
\path[-, double,draw] 
(3) -- (30) -- (3p0) -- (3p) -- (3)
(2cp) -- (2c) -- (2c0);
\end{tikzpicture} &
\begin{tikzpicture}[->,>=stealth',shorten >=1pt,auto,
  thick,main node/.style={
 node distance={6ex}, 
  font=\sffamily\small\bfseries,minimum size=15mm}]
\node[main node] (1b){1};
\node[main node] (2b)[below of=1b]{2};
\node[main node] (3)[below of=2b]{3}; 
\node[main node] (2a)[left of=3]{2};
\node[main node] (1a)[left of=2a]{1};
\node[main node] (2c)[below of=3]{2};
\node[main node] (2cp)[left of=2c]{2};
\node[main node] (2cpp)[left of=2cp]{2};
\node[main node] (1c)[left of=2cpp]{1};

\path[<-,draw] 
(1a) edge node[above] {$a$} (2a)
(2a) edge node[above] {$b$} (3)
(2b) edge node[right] {$d$}(3)
(1b) edge node[right] {$c$} (2b)
(1c) edge node {$e$} (2cpp) 
(3) edge node[right]{$f$} (2c)
(1a) edge node{$fba$} (2cpp)
(2a) edge node{$fb$} (2cp);
\path[-, double,draw] (2c)-- (2cp) --(2cpp);
\end{tikzpicture} 

&
\begin{tikzpicture}[->,>=stealth',shorten >=1pt,auto,
  thick,main node/.style={
 node distance={6ex}, 
  font=\sffamily\small\bfseries,minimum size=15mm}]
\node[main node] (1b){1};
\node[main node] (2b)[right of=1b]{2};
\node[main node] (3)[below of=1b]{3}; 
\node[main node] (2a)[left of=3]{2};
\node[main node] (1a)[left of=2a]{1};
\node[main node] (2c)[below of=3]{2};
\node[main node] (2cp)[left of=2c]{2};
\node[main node] (2cpp)[left of=2cp]{2};
\node[main node] (1c)[left of=2cpp]{1};
\node[main node] (2c0)[right of=2c]{2};
\node[main node] (30)[right of=3]{3};

\path[<-,draw] 
(1a) edge node[above] {$a$} (2a)
(2a) edge node[above] {$b$} (3)
(2b) edge node[right] {$d$}(30)
(1b) edge node[right] {$dc$}(3)
(1b) edge node[above] {$c$} (2b)
(1c) edge node {$e$} (2cpp) 
(3) edge node[right]{$f$} (2c)
(30) edge node[right]{$f$} (2c0)
(1a) edge node{$fba$} (2cpp)
(2a) edge node{$fb$} (2cp);
\path[-, double,draw] 
(2c0) -- (2c)-- (2cp) --(2cpp) 
(3) -- (30);
\end{tikzpicture} 
\\
\hline
$\bv=(1,4,4,1)$ &$\bv=(1,2,1,4)$ & $\bv=(1,2,1,2,1)$ \\
\hline
\end{tabular}}\end{tabular}
\caption{Infinite families induced by $\widetilde{E}_6$}\label{fig:E6}
\end{figure}

\begin{remark}
It is worth noting that our infinite families arising in Tables \ref{fig:D4} and \ref{fig:E6} all correspond to simpler families in $\rep(\widehat{Q}_{p,n-1},\widehat{I}_{p,{n-1}})$ through Theorem \ref{thm:Delta_equiv} and Proposition \ref{prop:caracHT}. Up to symmetry and bending of some arrows, they all are of the form 

\begin{center}
\begin{tabular}{|c|c|c|}
\hline
\small\begin{tikzpicture}[descr/.style={fill=white}]
\matrix (m) [matrix of math nodes, row sep=0.9em,
column sep=0.9em, text height=0.2ex, text depth=0.1ex]
{ & \bullet_1 &  \bullet_1      \\
\bullet_1 & \bullet_2 &  \bullet_1     \\
\bullet_1 &  \bullet_1&       \\  };
\path[-]
(m-1-2) edge (m-1-3)
(m-1-2) edge (m-2-2)
(m-1-3) edge (m-2-3)
(m-3-1) edge  (m-3-2)
(m-2-2) edge  (m-2-3)
(m-2-1) edge  (m-2-2)
(m-2-1) edge (m-3-1)
(m-2-2) edge (m-3-2)
(m-1-2) edge[-,dotted] (m-2-3)
(m-2-1) edge[-,dotted] (m-3-2)
;\end{tikzpicture} 
&
\small\begin{tikzpicture}[descr/.style={fill=white}]
\matrix (m) [matrix of math nodes, row sep=0.9em,
column sep=0.9em, text height=0.2ex, text depth=0.1ex]
{ & \bullet_1  \\
\bullet_1 & \bullet_2  \\
\bullet_2 & \bullet_2  \\
\bullet_2 & \bullet_1  \\
\bullet_1 &   \\  };
\path[-]
(m-2-1) edge (m-2-2)
(m-3-1) edge (m-3-2)
(m-4-1) edge  (m-4-2)
(m-2-1) edge  (m-3-1)
(m-3-1) edge (m-4-1)
(m-4-1) edge (m-5-1)
(m-1-2) edge  (m-2-2)
(m-2-2) edge (m-3-2)
(m-3-2) edge (m-4-2)
(m-2-1) edge[-,dotted] (m-3-2)
(m-3-1) edge[-,dotted] (m-4-2)
;\end{tikzpicture}
&\small\begin{tikzpicture}[descr/.style={fill=white}]
\matrix (m) [matrix of math nodes, row sep=0.9em,
column sep=0.9em, text height=0.2ex, text depth=0.1ex]
{ &&\bullet 1\\
\bullet_1& \bullet_2 &  \bullet_2      \\
\bullet_2 & \bullet_2 &  \bullet_1     \\
\bullet_1 &  \bullet_1&       \\  };
\path[-]
(m-1-3) edge (m-2-3)
(m-2-1) edge (m-2-2)
(m-2-1) edge (m-3-1)
(m-2-2) edge (m-2-3)
(m-2-2) edge (m-3-2)
(m-2-3) edge (m-3-3)
(m-4-1) edge  (m-4-2)
(m-3-2) edge  (m-3-3)
(m-3-1) edge  (m-3-2)
(m-3-1) edge (m-4-1)
(m-3-2) edge (m-4-2)
(m-2-1) edge[-,dotted] (m-3-2)
(m-2-2) edge[-,dotted] (m-3-3)
(m-3-1) edge[-,dotted] (m-4-2)
;\end{tikzpicture} 
\\
\hline
%
%
\end{tabular}
\end{center}
\end{remark}

\subsection[Delta-filtrations]{$\Delta$-filtrations}\label{delta}
We now investigate $\rep^{\inj}(\widehat{\Q}_{p,n},\widehat{I}_{p,n})$ in more detail.
This category turns out to be a certain category of $\Delta$-filtered modules. We describe it following the constructions in \cite{DlR} and \cite{BH}.

We define $V:=\{1,...,n\}\times \{1,...,p\}$, which equals the set of vertices of $\widehat{\Q}_{p,n}$; the first entry increases from top to bottom and the second entry increases from left to right. A total ordering on $V$ is given by 
\[(i,j)\leq (k,l) \Leftrightarrow i<k~\mathrm{or}~ (i=k ~\mathrm{and} j\geq l).\] 
For $(i,j)\in V$, let $S(i,j)$ be the standard simple representation at the vertex $(i,j)$. 

The projective indecomposables $P(i,j)$ of $\widehat{\A}(p)_n$ are parametrized by $V$ and are given by
 \[P(i,j)_{k,l}=\left\lbrace
\begin{array}{ll}
K & {\rm if}~ k\geq i ~{\rm and}~ l\geq j, \\ 
0 & {\rm otherwise}.
                                                               \end{array}
 \right. \]
together with identity and zero maps, accordingly. Given a simple representation $S(i,j)$, the epimorphism  $f: P(i,j)\rightarrow S(i,j)$ is a projective cover of $S(i,j)$  by \cite[III.2.4]{ASS}.

Let $D(i,j)$ be the maximal quotient of $P(i,j)$ which admits a filtration of simple representations $S(k,l)$, such that $(k,l)\leq (i,j)$. Then 
\[D(i,j)_{k,l}=\left\lbrace
\begin{array}{ll}
K & {\rm if}~ i=k ~{\rm and}~ l\geq j, \\ 
0 & {\rm otherwise},
                                                               \end{array}
 \right. \]
and the sequence is $0\subseteq D(i,p)\subseteq \cdots \subseteq D(i,j+1) \subseteq D(i,j)$ with quotients $S(i,x)$, $j\leq x\leq p$. The module $D(i,j)$ is thus a maximal factor module of $P(i,j)$ with composition factors of
the form $S(k,l)$ with $(k,l)\leq(i,j)$. 

The representations $\Delta:=\{D(i,j)\mid (i,j)\in V\}$ are called \textit{standard} representations. We define $\Fa(\Delta)$ to be the category of all $\Delta$-filtered modules, that is, modules $M$ which admit a filtration $\{0\}=M_k\subseteq M_{k-1}\subseteq \cdots \subseteq M_{1}\subseteq M_0=M$ for some $k$ and such that for every $i$, there is a module $D\in \Delta$, such that $M_i/M_{i+1}\cong D$.

The \textit{costandard} representations $\nabla:=\{\nabla(i,j)\mid i,j\}$ and the category $\mathcal{F}(\nabla)$ are defined dually and are given by \[\nabla(i,j)_{k,l}=\left\lbrace
\begin{array}{ll}
K & {\rm if}~ k\leq i ~{\rm and}~ l=j, \\ 
0 & {\rm otherwise},
                                                               \end{array}
 \right. \]
 together with the obvious maps.
 
\begin{proposition}\label{prop:F_Delta}
\[
\begin{array}{ll}
\Fa(\Delta) &= \{M\in \rep(\widehat{\Q}_{p,n},\widehat{I}_{p,n})\mid \Hom(S(i,j),M)=0~{\rm for~}(i,j)\in V,~ j<p \} \\
 &  = \rep^{\inj}(\widehat{\Q}_{p,n},\widehat{I}_{p,n}) \\
 \Fa(\nabla) &=  \{ M\in \rep(\widehat{\Q}_{p,n},\widehat{I}_{p,n}) \mid M_{\beta_{k,l}}~{\rm is~surjective~for~all}~k,l\}
  \end{array}
 \]
\end{proposition}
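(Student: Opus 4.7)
The proposition packs three equalities, which I would prove via the chain
\[
\Fa(\Delta) \;\subseteq\; \{M : \Hom(S(i,j), M)=0 \text{ for all } (i,j)\in V \text{ with } j<p\} \;\subseteq\; \rep^{\inj}(\widehat{\Q}_{p,n}, \widehat{I}_{p,n}) \;\subseteq\; \Fa(\Delta),
\]
together with the formal dual for the $\Fa(\nabla)$ statement.

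The first inclusion is a direct computation: $D(k,l)$ is the row-$k$ chain $K\xrightarrow{\id}\cdots\xrightarrow{\id}K$ starting at column $l$, with every $\beta$ zero, so $\soc D(k,l)=S(k,p)$. Left-exactness of $\Hom(S(i,j),-)$ then propagates the vanishing along any $\Delta$-filtration. For the second inclusion, assume $\Hom(S(i,j),M)=0$ for every $j<p$; I show $\alpha_{k,j}$ is injective by downward induction on $k$, from $k=n$ to $k=1$. Given $v\in \ker\alpha_{k,j}$, commutativity yields $\alpha_{k+1,j}(\beta_{k,j}(v))=\beta_{k,j+1}(\alpha_{k,j}(v))=0$, so $\beta_{k,j}(v)\in \ker\alpha_{k+1,j}$, which vanishes by the inductive hypothesis (or is vacuous when $k=n$); either way $v$ is annihilated by every outgoing arrow at $(k,j)$ and so lies in $\Hom(S(k,j),M)=0$.

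The third and main inclusion proceeds by induction on $\dim M$. A nonzero $M\in \rep^{\inj}$ has nonzero socle, which by the previous step must sit in the last column: pick $0\neq v\in M_{k,p}$ with $\beta_{k,p}(v)=0$ (vacuous for $k=n$). Let $j$ be the smallest index with $v\in \im(\alpha_{k,p-1}\cdots\alpha_{k,j})$ and fix a preimage $u\in M_{k,j}$. Commutativity and $\beta_{k,p}(v)=0$ force $(\alpha_{k,p-1}\cdots\alpha_{k,j})(\beta_{k,j}(u))=0$, and horizontal injectivity upgrades this to $\beta_{k,j}(u)=0$. Consequently the subrepresentation of $M$ generated by $u$ is one-dimensional, spanned by $\alpha^{l-j}(u)$, at $(k,l)$ for $j\leq l\leq p$ and zero elsewhere, i.e.\ it is exactly $D(k,j)$. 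Moreover $M/D(k,j)$ stays in $\rep^{\inj}$: the only horizontal map whose injectivity could be damaged is $\alpha_{k,j-1}$, but the minimality of $j$ gives $u\notin \im\alpha_{k,j-1}$, so the induced map remains injective. The inductive hypothesis then supplies a $\Delta$-filtration of the quotient, which concatenates with $D(k,j)$ to yield one of $M$.

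For the $\Fa(\nabla)$ equality, the same three-step argument runs dually. One computes $\top \nabla(i,j)=S(1,j)$, so right-exactness of $\top$ forces $\top M$ to be supported in row $1$ for any $\nabla$-filtered $M$; a forward induction on $l$ (mirroring the backward induction on $k$ above, using commutativity and the absence of arrows into column $1$ from the left) then translates the top-support condition into surjectivity of every $\beta_{k,l}$. The reverse inclusion again uses induction on $\dim M$: starting from a nonzero element of the top at a position $(1,l)$, one iteratively pushes down via $\beta$ to produce a maximal-length nonzero column chain and quotients by the resulting $\nabla(k,l)$, the dual of the sub-$D(k,j)$ construction above, checking that $\beta$-surjectivity is preserved. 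The technical core throughout is the interplay between commutativity and injectivity of the $\alpha$'s (resp.\ surjectivity of the $\beta$'s), which forces the other family of structure maps on the distinguished element to vanish (resp.\ to surject); I expect this to be the main obstacle, since without that trick the candidate sub $D(k,j)$ (or quotient $\nabla(k,l)$) need not actually be a subrepresentation of the prescribed isomorphism type.
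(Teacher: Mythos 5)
Your proof runs the same cycle of three inclusions as the paper's, and the two easier steps match the paper in substance: $\Fa(\Delta)\subseteq\{\Hom(S(i,j),-)=0,\ j<p\}$ via $\soc D(k,l)=S(k,p)$ and left-exactness of $\Hom$, and $\{\Hom(S(i,j),-)=0\}\subseteq\rep^{\inj}$ via the downward row induction using commutativity. The one genuinely different step is $\rep^{\inj}\subseteq\Fa(\Delta)$. The paper descends a chain $M_0\supset M_1\supset\cdots$ by shrinking $(M_i)_{x,y}$ by a hyperplane at the top-left nonzero cell $(x,y)$ and taking $\alpha$-images to the right, asserting $M_i/M_{i+1}\cong D(x,y)$; as written this requires the dimensions along row $x$ from column $y$ to $p$ to be constant, which is not automatic (for $M=D(2,1)\oplus D(2,2)$ in the $2\times 2$ grid the formula gives $M_1=0$ although $M\not\cong D(2,1)$). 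You instead build the filtration from the socle: take $v\in\soc M$ at $(k,p)$, pull back along the injective row-$k$ $\alpha$-chain to a minimal column $j$, and observe that commutativity together with injectivity in row $k+1$ forces $\beta_{k,j}(u)=0$, so the cyclic submodule generated by $u$ is exactly $D(k,j)$; minimality of $j$ then keeps $M/D(k,j)$ in $\rep^{\inj}$. This is correct and neatly avoids the dimension issue in the paper's quotient-peeling. Two small slips: ``by the previous step'' is imprecise---that $\soc M$ sits in column $p$ for $M\in\rep^{\inj}$ is an immediate observation (a socle vector at $(i,j)$ with $j<p$ is killed by $\alpha_{i,j}$), not a consequence of the inclusion $\{\Hom=0\}\subseteq\rep^{\inj}$; and the displayed composite $(\alpha_{k,p-1}\cdots\alpha_{k,j})(\beta_{k,j}(u))$ should use the $\alpha$'s of row $k+1$, since $\beta_{k,j}(u)\in M_{k+1,j}$. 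In the $\nabla$-sketch, note that a column chain through a top element $v\in M_{1,l}$ is a subrepresentation only if $\alpha_{1,l}(v)=0$, so one should rather exhibit $\nabla(k,l)$ as a quotient of $M$ and induct on its kernel, which stays $\beta$-surjective by the snake lemma.
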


\begin{proof}
We show that $\rep^{\inj}(\widehat{\Q}_{p,n},\widehat{I}_{p,n})\subset \Fa(\Delta)\subset \{M\mid \Hom(S(i,j),M)=0~{\rm for~}j<p \}\subset \rep^{\inj}(\widehat{\Q}_{p,n},\widehat{I}_{p,n})$; the proof for $\Fa(\nabla)$ is dual.

The first inclusion can be shown as follows: Let $M\in \rep^{\inj}(\widehat{\Q}_{p,n},\widehat{I}_{p,n})$. 
We show that $M$ has a $\Delta$-filtration inductively:

Define $M_0=M$. If $M_i$ has been defined, then without loss of generality, we may assume that $(M_{i})_{k,l}=K^{a_{k,l}}$ for all $k,l$. Define  $x$ to be the minimal integer, such that $(M_{i})_{x,n}\neq 0$ (then $(M_{i})_{x-1,n}= 0$) and $y$ to be the minimal integer, such that $(M_{i})_{x,y}\neq 0$ (then $(M_{i})_{x,y-1}= 0$ or $y=1$). Then define $M_{i+1}$ to be the module
 \[(M_{i+1})_{k,l}=\left\lbrace
\begin{array}{ll}
K^{a_{k,l}-1} & {\rm if}~ k=x ~{\rm and}~ l= y, \\
\alpha_{k,l-1}\circ \dots \circ \alpha_{k,y}(K^{a_{x,y}-1}) & {\rm if}~ k=x ~{\rm and}~ l> y, \\
K^{a_{k,l}} & {\rm otherwise},
                                                               \end{array}\right. \]
together with the induced natural maps. Then $M_i/M_{i+1}\cong D(x,y)$ and by induction $M$ is $\Delta$-filtered.

The second inclusion is a consequence of the fact that $\soc D(i,j)=D(i,j)_{i,p}$.

Let $M$ be a module, such that one horizontal map is not injective, say $\alpha_{i,j}$ with $i$ maximal. Since $\alpha_{i+1,j}$ is injective, it follows from the commutativity relation that $\ker(\alpha_{i,j})\subset \ker(\beta_{i,j})$. So $\ker(\alpha_{i,j})$ is a submodule of $M$ isomorphic to a sum of copies of $S(i,j)$.
Since $j<p$, the last inclusion follows.
\end{proof}

 The algebra $\Fa(\Delta)$ is strongly quasi-hereditary \cite{Ri2}, since $\End_{K\widehat{\Q}_{p,n}}(D(i,j))\cong K$ for all $(i,j)\in V$ and since the projective dimension of $D(i,j)\in\Delta$ is at most $1$. Indeed, if $i=n$, then  $D(i,j)$ is projective. Otherwise, a projective resolution is given by
\[0 \rightarrow P(i+1,j) \rightarrow P(i,j)\rightarrow D(i,j)\rightarrow 0.\]
In a similar way, the costandard modules have injective dimension at most $1$.

%
%
%
%

\subsection[Delta-filtrations via a torsion pair]{$\mathcal{F}(\Delta)$ via a torsion pair}\label{deltatorsionpair}

In the following, we translate the category $\Fa(\Delta)$ to the torsionless part of a certain torsion pair in a similar manner as in V. Dlab's and C. M. Ringel's work \cite{DlR}. We refer to \cite[Chapter VI]{ASS} for basic definitions of Tilting Theory.

Define for each $(i,j)\in V$ a module
\[T(i,j)_{k,l}=\left\lbrace
\begin{array}{ll}
K & {\rm if}~ k\leq i ~{\rm and}~ l\geq j, \\ 
0 & {\rm otherwise.}
                                                               \end{array}
 \right. \] 
 Define $T:=\bigoplus_{i,j} T(i,j)$. 
 
 \begin{lemma}
\begin{enumerate}
\item  $\mathcal{F}(\Delta)\cap\mathcal{F}(\nabla)=\mathrm{add}\;T$ equals the set of all Ext-injective modules.
\item The module $T=\bigoplus_{(i,j)\in V} T(i,j)$ is a tilting module.
\item The pair $(\Fa(\nabla), \Ha(T))$, where 
\[\Ha(T):=\{Y\in \rep(\widehat{\Q}_{p,n},\widehat{I}_{p,n})\mid \Hom(T,Y)=0\},\]
is a torsion pair.
\end{enumerate}
 \end{lemma}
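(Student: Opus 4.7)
The plan is to follow the Dlab--Ringel theory of characteristic tilting modules for quasi-hereditary algebras \cite{DlR,Ri2}, specialized to the combinatorics of $\widehat{\A}(p)_n$ introduced in Section~\ref{delta}.

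For (1), I would first check $\mathrm{add}\, T\subseteq\Fa(\Delta)\cap\Fa(\nabla)$ by direct inspection: every nonzero horizontal map of $T(i,j)$ is an identity $K\to K$, hence injective, and every nonzero vertical map is likewise an identity, hence surjective, so Proposition~\ref{prop:F_Delta} and its dual apply. Concretely, the chain $0\subset T(1,j)\subset\cdots\subset T(i,j)$ is a $\Delta$-filtration with subquotients $D(k,j)$, while $0\subset T(i,p)\subset T(i,p-1)\subset\cdots\subset T(i,j)$ is a $\nabla$-filtration with subquotients $\nabla(i,l)$. For the reverse inclusion, given an indecomposable $M\in\Fa(\Delta)\cap\Fa(\nabla)$, the dimension function $(k,l)\mapsto\dim M_{k,l}$ is weakly increasing in $l$ (from injectivity of the $\alpha$'s) and weakly decreasing in $k$ (from surjectivity of the $\beta$'s); picking a northwest corner $(i_0,j_0)$ of the support and an appropriate basis vector there produces a split monomorphism $T(i_0,j_0)\hookrightarrow M$, and iterating gives $M\in\mathrm{add}\, T$. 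The coincidence with the Ext-injectives of $\Fa(\Delta)$ is then the Ringel--Dlab characterization, resting on the vanishing $\Ext^1(\Fa(\Delta),\Fa(\nabla))=0$ already recorded at the end of Section~\ref{delta}.

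For (2), I would verify each tilting axiom separately. Projective dimension: since $T(i,j)$ is generated in position $(1,j)$ with kernel supported on rows $k\geq i+1$, there is a minimal projective resolution
\[
0\to P(i+1,j)\to P(1,j)\to T(i,j)\to 0
\]
(with the convention $P(n+1,j)=0$), so $\mathrm{pd}\, T(i,j)\leq 1$. The vanishing $\Ext^i(T,T)=0$ for $i\geq 1$ follows from $T\in\Fa(\Delta)\cap\Fa(\nabla)$ together with the standard vanishing $\Ext^i(\Delta,\nabla)=0$ in quasi-hereditary algebras. Finally, every indecomposable projective admits the $\mathrm{add}\, T$-coresolution
\[
0\to P(i,j)\to T(n,j)\to T(i-1,j)\to 0,
\]
visible from the supports (with $T(0,j)=0$, so that $P(1,j)=T(n,j)\in\mathrm{add}\, T$). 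For (3), $T$ being tilting with $\mathrm{pd}\, T\leq 1$ yields the standard tilting torsion pair $(\mathrm{Gen}\, T,\Ha(T))$ with $\mathrm{Gen}\, T=\{M\mid\Ext^1(T,M)=0\}$. It remains to identify $\mathrm{Gen}\, T$ with $\Fa(\nabla)$: every $\nabla(i,j)=T(i,j)/T(i,j+1)$ lies in $\mathrm{Gen}\, T$, and closure of torsion classes under extensions yields $\Fa(\nabla)\subseteq\mathrm{Gen}\, T$; the reverse inclusion is the classical characterization of $\Fa(\nabla)$ via the characteristic tilting module.

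The main obstacle is the peeling-off step in (1): one must verify that the northwest corner of the support of any indecomposable $M\in\Fa(\Delta)\cap\Fa(\nabla)$ really does produce a split summand isomorphic to some $T(i_0,j_0)$, as opposed to some more exotic indecomposable shape permitted by the commutativity relations in $\widehat{I}_{p,n}$. Everything else is essentially a specialization of well-documented theory.
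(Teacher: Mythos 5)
Your proposal rests on the same Dlab--Ringel machinery as the paper's proof, and your explicit verifications for part (2) are correct: the projective resolution $0\to P(i+1,j)\to P(1,j)\to T(i,j)\to 0$ and the $\add T$-coresolution $0\to P(i,j)\to T(n,j)\to T(i-1,j)\to 0$ (with $T(n,j)=P(1,j)$) both check out directly from the explicit supports of $P(i,j)$ and $T(i,j)$. The paper itself just cites \cite[Lemma 4.1 ff]{DlR} from the fact that every standard module has projective dimension at most $1$, so your version of (2) is a more self-contained alternative. Part (3) matches the paper.

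The one genuine gap is the reverse inclusion $\Fa(\Delta)\cap\Fa(\nabla)\subseteq\add T$ in part (1), which you yourself correctly flag as the main obstacle: it is not at all obvious that picking a basis vector at a ``northwest corner'' of the staircase-shaped support of an indecomposable $M\in\Fa(\Delta)\cap\Fa(\nabla)$ produces a \emph{split} monomorphism from some $T(i_0,j_0)$, since the commutativity relations interact nontrivially with the joint injectivity/surjectivity constraints and there is no a priori reason the resulting submodule has a complement. The paper sidesteps this entirely by a counting argument, which is the step your proposal is missing. Dlab--Ringel show that for a quasi-hereditary algebra on $|V|=p\cdot n$ simple modules, $\Fa(\Delta)\cap\Fa(\nabla)$ is exactly the class of Ext-injective objects of $\Fa(\Delta)$ and there are precisely $p\cdot n$ indecomposable ones (the summands of the characteristic tilting module). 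Since the $T(i,j)$ already supply $p\cdot n$ pairwise non-isomorphic indecomposables in $\Fa(\Delta)\cap\Fa(\nabla)$ (by Proposition \ref{prop:F_Delta} and its dual, exactly as in your first paragraph), they must exhaust the Ext-injectives, giving $\add T=\Fa(\Delta)\cap\Fa(\nabla)$ without any peeling-off. Replace your structural argument by this counting and the proof is complete. One small inaccuracy: the vanishing $\Ext^1(\Fa(\Delta),\Fa(\nabla))=0$ is not literally ``recorded at the end of Section~\ref{delta}''; what is recorded there is strong quasi-heredity and the bounds on projective (resp.\ injective) dimension of the standards (resp.\ costandards), from which the Ext vanishing is a standard consequence.
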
 
 
\begin{proof}
We know $T(i,j)\in \Fa(\Delta)\cap\mathcal{F}(\nabla)$ by Proposition \ref{prop:F_Delta}. By \cite{DlR}, this is $\Ext$-injective, while there are exactly $p\cdot n$ indecomposable $\Ext$-injective representations. Thus, 
 $\add T = \Fa(\Delta)\cap\mathcal{F}(\nabla)$. Since the projective dimension of every standard representation is at most $1$, the module $T$ is a tilting module by \cite[Lemma 4.1 ff]{DlR} and the pair $(\Fa(\nabla), \Ha(T))$ is a torsion pair by \cite[Lemma~4.2]{DlR}.
\end{proof}

 Let $\varphi$ be the endofunctor of $\rep(\widehat{\Q}_{p,n},\widehat{I}_{p,n})$ defined by $\varphi(M) = M/\eta_T(M)$, where $\eta_T(M)$ is the trace of $M$ along $T$, that is, the largest submodule of $M$ which lies in $\add T$. Let $\Fa(\Delta)/\langle T\rangle$ be the category with the same objects as $\Fa(\Delta)$, and morphisms given by residue classes of maps in $\Fa(\Delta)$:  two maps $f,g : X\rightarrow Y$ are contained in the same residue class if and only if $f - g$ factors through a direct sum of copies of $T$. 

\begin{theorem}\cite[Theorem 3]{DlR}\label{thm:Delta_equiv}
The functor $\varphi$ induces an equivalence between $\Fa(\Delta)/\langle T\rangle$ and $\Ha(T)$.
\end{theorem}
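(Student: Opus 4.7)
The statement is the classical Dlab--Ringel equivalence attached to the characteristic tilting module of a quasi-hereditary algebra, applied to the algebra $\widehat{\A}(p)_n$. My plan is to follow the tilting-theoretic recipe: first verify that $\varphi$ really lands in $\Ha(T)$ and kills $\add T$, then prove faithfulness, fullness and essential surjectivity, exploiting throughout that $T$ is Ext-projective in $\Fa(\Delta)$ and that $\Fa(\Delta)$ coincides with $\{M\mid \Ext^1(M,\Fa(\nabla))=0\}$ for a quasi-hereditary algebra.

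The first two observations are essentially formal. For any module $M$, the trace $\eta_T(M)$ is by construction the largest submodule of $M$ generated by $T$, so $\Hom(T,M/\eta_T(M))=0$, i.e.\ $\varphi(M)\in \Ha(T)$. In particular $\varphi(T)=0$, and any morphism factoring through an object of $\add T$ lands in $\eta_T(Y)$ of its target, hence becomes zero after applying $\varphi$. This shows $\varphi$ descends to a well-defined functor $\bar\varphi\colon \Fa(\Delta)/\langle T\rangle\to\Ha(T)$.

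For faithfulness, suppose $f\colon X\to Y$ in $\Fa(\Delta)$ satisfies $\varphi(f)=0$. Then $f(X)\subseteq \eta_T(Y)$. The key point is that for $X,Y\in\Fa(\Delta)$ the trace $\eta_T(Y)$ lies in $\add T$: since $Y$ is $\Delta$-filtered and $T$ is Ext-projective in $\Fa(\Delta)$ with $\add T=\Fa(\Delta)\cap\Fa(\nabla)$, the submodule generated by $T$ inside $Y$ is a direct summand in $\add T$ (and even equals the maximal $\Fa(\nabla)$-submodule, by the previous lemma). Hence $f$ factors as $X\to \eta_T(Y)\hookrightarrow Y$ with the middle term in $\add T$, and $f$ is zero in $\Fa(\Delta)/\langle T\rangle$. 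Fullness is obtained by a mirror argument: given $\bar g\colon \varphi(X)\to \varphi(Y)$ in $\Ha(T)$, the vanishing $\Ext^1(X,\eta_T(Y))=0$ (because $X\in\Fa(\Delta)$ and $\add T\subseteq\Fa(\nabla)$) lets us lift the composition $X\twoheadrightarrow\varphi(X)\xrightarrow{\bar g}\varphi(Y)$ along $Y\twoheadrightarrow\varphi(Y)$ to an honest morphism $X\to Y$ in $\Fa(\Delta)$.

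The main obstacle, and the heart of the argument, is essential surjectivity. Given $Y\in\Ha(T)$, I would construct $M\in\Fa(\Delta)$ with $\varphi(M)\cong Y$ by a universal extension with $T$: choose a basis $\xi_1,\dots,\xi_r$ of $\Ext^1(T,Y)$, viewed as an extension
\[ 0\to Y\to M\to T^r\to 0,\]
and check that $M\in\Fa(\Delta)$ (because $T^r\in\Fa(\Delta)$, $Y\in\Ha(T)\subseteq\Fa(\Delta)$ for this quasi-hereditary algebra, and $\Fa(\Delta)$ is closed under extensions) and that $\Ext^1(T,M)=0$ by the long exact sequence and the choice of the $\xi_i$. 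The trace of $T$ in $M$ is exactly the image of $T^r$ pulled back through the projection: one verifies that $\eta_T(M)$ is the kernel of a surjection $M\twoheadrightarrow Y$, whence $\varphi(M)\cong Y$. The delicate step here is knowing $Y\in\Fa(\Delta)$, which follows from the characterisation $\Fa(\Delta)=\{M\mid\Ext^1(M,\nabla)=0\}$ valid because injective dimensions of costandards are at most $1$; this is the input one really needs from the quasi-hereditary structure established in the previous subsection.
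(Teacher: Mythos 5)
First, a structural note: the paper states this result as a citation, \cite[Theorem 3]{DlR}, and supplies no proof of its own, so there is no ``paper's proof'' to compare against --- you are reconstructing the Dlab--Ringel argument from scratch. Your overall framework (descend to the quotient category, use the orthogonality $\Ext^1(\Fa(\Delta),\Fa(\nabla))=0$ furnished by the strongly quasi-hereditary structure, then verify fully faithfulness and essential surjectivity) is indeed the right one, and the well-definedness, faithfulness and fullness steps are in the right spirit.

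However, the essential surjectivity step rests on a concretely false claim: $\Ha(T)\subseteq\Fa(\Delta)$. This inclusion fails here. Take the simple module $S(2,1)$ (more generally any $S(i,j)$ with $i\ge 2$, $j<p$). Any nonzero morphism $T(i',j')\to S(2,1)$ would be a surjection onto a simple and hence would force $S(2,1)=\mathrm{Top}(T(i',j'))=S(1,j')$, which is impossible since the top always lies in the first row; so $\Hom(T,S(2,1))=0$ and $S(2,1)\in\Ha(T)$. But $S(2,1)\notin\Fa(\Delta)=\rep^{\inj}(\widehat{\Q}_{p,n},\widehat{I}_{p,n})$, because the horizontal map $\alpha_{2,1}\colon K\to 0$ is not injective. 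Consequently, the justification that $M\in\Fa(\Delta)$ ``because $Y\in\Ha(T)\subseteq\Fa(\Delta)$ and $\Fa(\Delta)$ is closed under extensions'' collapses; indeed $\Ha(T)$ is genuinely larger than $\Fa(\Delta)\cap\Ha(T)$, which is precisely what makes Proposition~\ref{prop:caracHT} identify it with the \emph{whole} module category $\rep(\widehat{\Q}_{p,n-1},\widehat{I}_{p,n-1})$ rather than a $\rep^{\inj}$-type subcategory. The theorem is still true: for the example above the module $M$ with $M_{1,l}=K$ for $l\ge 2$, $M_{2,l}=K$ for all $l$, zero elsewhere, and identity structure maps lies in $\Fa(\Delta)$, has $\eta_T(M)\cong T(2,2)$, and satisfies $\varphi(M)\cong S(2,1)$. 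To repair the argument you must build the preimage $M$ of $Y$ without assuming $Y$ is $\Delta$-filtered, for instance via a right $\Fa(\Delta)$-approximation of $Y$ or a universal extension of the form $0\to T'\to M\to Y\to 0$ with $T'\in\add T$ (rather than $0\to Y\to M\to T^r\to 0$), and then verify directly that $M\in\Fa(\Delta)$ and that $\eta_T(M)=T'$.
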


Since the indecomposable representations in $\Fa(\Delta)/\langle T\rangle$ are exactly the indecomposable representations in $\Fa(\Delta)$ except for the indecomposable representations contained in $\add T$ \cite{DlR}, we obtain the following corollary.

\begin{corollary}
The categories $\Ha(T)$ and $\Fa(\Delta)$ have the same representation type.
\end{corollary}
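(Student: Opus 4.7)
The plan is to combine Theorem~\ref{thm:Delta_equiv} with the observation that $\add T$ contains only finitely many indecomposables. More precisely, $V=\{1,\dots,n\}\times\{1,\dots,p\}$ is a finite set, so there are exactly $p\cdot n$ pairwise non-isomorphic indecomposables $T(i,j)$ in $\add T$. Hence the set of (isomorphism classes of) indecomposables in $\Fa(\Delta)$ differs from the set of indecomposables in the quotient category $\Fa(\Delta)/\langle T\rangle$ only by a finite number of objects.

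First I would recall the fact (already stated in the excerpt, with reference to \cite{DlR}) that the indecomposable objects of $\Fa(\Delta)/\langle T\rangle$ are exactly the indecomposables of $\Fa(\Delta)$ that do not lie in $\add T$: indeed, an object of $\add T$ becomes isomorphic to zero in the quotient since its identity factors through itself, while any indecomposable of $\Fa(\Delta)$ not in $\add T$ remains non-zero and indecomposable (its endomorphism ring is local and the ideal $\langle T\rangle$ intersects it in the radical part). Hence $\Fa(\Delta)$ has finite representation type if and only if $\Fa(\Delta)/\langle T\rangle$ does.

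Next, Theorem~\ref{thm:Delta_equiv} provides an equivalence of categories $\varphi\colon\Fa(\Delta)/\langle T\rangle\xrightarrow{\sim}\Ha(T)$. Since any equivalence of $K$-linear categories induces a bijection between sets of isomorphism classes of indecomposables, we conclude that $\Fa(\Delta)/\langle T\rangle$ and $\Ha(T)$ have the same representation type. Combining with the previous step yields that $\Fa(\Delta)$ and $\Ha(T)$ have the same representation type.

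The argument is essentially bookkeeping: the only mild subtlety is checking that passing to the quotient $\Fa(\Delta)/\langle T\rangle$ changes the indecomposable content by only finitely many objects, which is immediate here since $V$ is finite. No serious obstacle is expected; all the real content has already been done in the preceding lemma and in Theorem~\ref{thm:Delta_equiv}.
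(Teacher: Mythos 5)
Your argument is correct and matches the paper's own reasoning: the paper likewise invokes Theorem~\ref{thm:Delta_equiv} together with the observation (citing \cite{DlR}) that the indecomposables of $\Fa(\Delta)/\langle T\rangle$ are exactly those of $\Fa(\Delta)$ outside the finite set $\add T$. There is no meaningful difference in approach.
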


Thus, the knowledge of the category $\Ha(T)$ gives further insights into $\Fa(\Delta)$. We discuss the detailed structure of the former now.

\begin{proposition}\label{prop:caracHT}
$\Ha(T)\cong \rep(\widehat{\Q}_{p,n-1},\widehat{I}_{p,n-1})$
\end{proposition}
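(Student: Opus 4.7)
My plan is to characterize $\Ha(T)$ explicitly, and then identify it with $\rep(\widehat{\Q}_{p,n-1},\widehat{I}_{p,n-1})$ via a simple row-shift functor.

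First I would show that $Y \in \Ha(T)$ if and only if $Y_{1,l} = 0$ for every $l \in \{1,\ldots,p\}$, i.e.\ the top row of $Y$ vanishes. One direction is immediate: every $T(i,j)$ is generated as a module by its basis vector at $(1,j)$, so any morphism $f \colon T(i,j) \to Y$ is determined by $f_{1,j}(1) \in Y_{1,j}$, which must be zero when $Y_{1,j}=0$; hence $\Hom(T,Y)=0$. For the converse, given a nonzero $y \in Y_{1,l}$, I would define $f\colon T(n,l)\to Y$ on the support $\{(k,l'):k\leq n,\, l'\geq l\}$ of $T(n,l)$ by $f_{k,l'}(1):=Y_{\beta^{k-1}\alpha^{l'-l}}(y)$. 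This is well defined because the structure maps of $Y$ commute, and no ``boundary'' relation is violated since $\widehat{\Q}_{p,n}$ has no $(n{+}1)$-th row and no $(p{+}1)$-th column. The resulting morphism $f$ is nonzero, contradicting $Y\in\Ha(T)$.

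Next I would introduce the shift functor
\[
\Psi\colon \rep(\widehat{\Q}_{p,n-1},\widehat{I}_{p,n-1}) \longrightarrow \rep(\widehat{\Q}_{p,n},\widehat{I}_{p,n}),
\qquad
\Psi(N)_{k,l} = \begin{cases} 0 & \text{if } k=1, \\ N_{k-1,l} & \text{if } k\geq 2, \end{cases}
\]
with structure maps shifted one row downward (and set to zero on the top row and on the arrows into it). The commutativity squares of $\widehat{I}_{p,n}$ are inherited row-by-row from those of $\widehat{I}_{p,n-1}$, and the ``$\beta^{\bullet}=0$'' nilpotency condition degenerates in both truncated settings (no composition of more vertical maps than the number of rows minus one ever exists), so $\Psi$ is well defined, and by the first step its essential image lies in $\Ha(T)$.

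To conclude, I would verify that $\Psi$ is an equivalence. Faithfulness and fullness are immediate, since a morphism $\Psi(N) \to \Psi(N')$ is forced to be zero on the top row while its remaining components assemble into precisely a morphism $N\to N'$. Essential surjectivity onto $\Ha(T)$ follows from Step~1: any $Y\in\Ha(T)$ satisfies $Y_{1,l}=0$ for all $l$, so setting $N_{k,l} := Y_{k+1,l}$ with the inherited structure maps defines an object of $\rep(\widehat{\Q}_{p,n-1},\widehat{I}_{p,n-1})$ with $\Psi(N)\cong Y$. The main point requiring care is the bookkeeping between the two truncated ideals $\widehat{I}_{p,n}$ and $\widehat{I}_{p,n-1}$, but since both reduce to pure commutativity on the respective finite grids this compatibility is straightforward.
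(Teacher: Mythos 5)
Your proof is correct and gives the same characterization of $\Ha(T)$ as the paper (zero first row), but one of the two implications is proved by a different and arguably cleaner argument. The direction ``first row vanishes $\Rightarrow$ $\Hom(T,Y)=0$'' is the same in both: $T(i,j)$ is generated at vertex $(1,j)$, equivalently $\mathrm{Top}(T(i,j))=S(1,j)$, so a morphism out of it is determined by where that generator goes. For the converse, the paper invokes Theorem~\ref{thm:Delta_equiv} to write any $M\in\Ha(T)$ as $\varphi(N)$ for some $N\in\Fa(\Delta)$ and then argues that every nonzero first-row vector of $N$ generates a submodule of the form $T(t,j)$, so the trace quotient kills the first row. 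You instead observe directly that $T(n,\ell)=P(1,\ell)$ is projective, so $\Hom(T(n,\ell),Y)\cong Y_{1,\ell}$ by the defining property of projectives; the contrapositive gives the implication at once, with no reliance on the Dlab--Ringel equivalence. Both routes are valid; yours is more self-contained. Your explicit construction of the row-shift functor $\Psi$ and the remark that the nilpotency relation $\beta^n=0$ (resp.\ $\beta^{n-1}=0$) is vacuous on the truncated grids $\widehat{\Q}_{p,n}$ (resp.\ $\widehat{\Q}_{p,n-1}$) make explicit the identification that the paper treats as immediate once the characterization of $\Ha(T)$ is in hand.
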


\begin{proof}
We show that the representations in $\Ha(T)$ are exactly those representations $M
$, such that $M_{1,j}=0$ for all $j$.

Assume that $M$ is a representation of $\rep(\widehat{\Q}_{p,n},\widehat{I}_{p,n})$, such that $M_{1,j}\neq 0$ for some $j$. Let $v$ be a non-zero element of $M_{1,j}$. It generates a submodule of $M$ which is a quotient of $T(n,j)$. Hence $\Hom(T,M)\neq 0$.

Let $M$ be a representation of $\rep(\widehat{\Q}_{p,n},\widehat{I}_{p,n})$, such that $M_{1,j}=0$ for all $j$. Since $\rm{Top}(T(i,j))=S(1,j)$ for all $i$, we then have $\Hom(T,M)=0$.
\end{proof}

\begin{proposition}\label{prop:classifFDelta}
There are only finitely many isomorphism classes of indecomposable representations in $\Fa(\Delta)=\rep^{\inj}(\widehat{\Q}_{p,n},\widehat{I}_{p,n})$ if and only if $p=1$ or $n\leq 2$ or $(p,n)\in\{(2,3), (3,3), (4,3), (2,4), (2,5) \}$.
\end{proposition}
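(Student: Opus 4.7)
The plan is to reduce, via the corollary of Theorem~\ref{thm:Delta_equiv} together with Proposition~\ref{prop:caracHT}, the classification of representation-finiteness of $\Fa(\Delta)$ to that of $\rep(\widehat{\Q}_{p,n-1},\widehat{I}_{p,n-1})$. On a grid of height $n-1$ the vertical composition $\beta^n$ is automatically zero, so $\widehat{\A}(p)_{n-1}$ is simply the incidence algebra of the rectangular poset $[n-1]\times[p]$; the task thus becomes classifying finite representation type for this ``commutative grid'' algebra. Transposing the grid yields an algebra isomorphism, which matches the symmetry of the claimed list in the variables $p$ and $n-1$.

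The finite direction splits into two parts. If $p=1$ or $n\leq 2$, the quiver is a linear orientation of Dynkin type $A$ and Gabriel's theorem applies. The remaining small rectangles, namely $(p,n-1)\in\{(2,2),(3,2),(4,2),(2,3),(2,4)\}$, I would handle by direct knitting of the Auslander--Reiten quiver. Each $\widehat{\A}(p)_{n-1}$ is obtained from the corresponding algebra $\widehat{\A}(p,n)_{n-1}$ treated in Proposition~\ref{prop:reptypeGeneral} by killing the loops, so the AR quivers already computed in Appendices~\ref{app:p2x2},~\ref{app:p3x2},~\ref{app:p2x3} immediately yield the required finite list of indecomposables for $(p,n-1)\in\{(2,2),(3,2),(2,3)\}$; the two boundary cases $(4,2)$ and $(2,4)$ are then dispatched by running knitting one additional step.

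For the infinite direction, it suffices, by appending simple summands at additional vertices, to exhibit a one-parameter family of pairwise non-isomorphic indecomposables in each \emph{minimal} rectangle excluded from the list. Up to transposition these minimal cases are the $3\times 3$ commutative grid (i.e.\ $(p,n)=(3,4)$) and the $2\times 5$ commutative grid (i.e.\ $(p,n)=(2,6)$). In each case the underlying quiver contains an extended Dynkin subquiver (of type $\widetilde{E}_6$ and $\widetilde{D}_6$ respectively) compatible with the commutativity relations, and one-parameter families arise from the nullroot representations of these diagrams, in the very spirit of Figures~\ref{fig:D4} and~\ref{fig:E6}.

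The main obstacle I expect is the careful verification, in the two minimal infinite rectangles, that the constructed one-parameter families really are pairwise non-isomorphic indecomposable representations compatible with \emph{all} commutativity relations; this amounts to a BHV-list argument adapted to the grid with commutativity constraints, and requires some bookkeeping to ensure that no ``hidden'' isomorphisms or decompositions are introduced by the commutativity. Once these minimal cases are settled, the propagation to larger rectangles via addition of simple summands is routine, and the finite side is closed off by the knitting computations above.
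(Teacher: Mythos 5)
Your reduction is exactly the one the paper uses: via the corollary to Theorem~\ref{thm:Delta_equiv} and Proposition~\ref{prop:caracHT}, representation-finiteness of $\Fa(\Delta)=\rep^{\inj}(\widehat{\Q}_{p,n},\widehat{I}_{p,n})$ is equivalent to that of $\widehat{\A}(p)_{n-1}$, the $(n-1)\times p$ commutative grid, and the symmetry in $p\leftrightarrow n-1$ matches the transposition isomorphism. The difference from the paper is only in how the grid classification is established: the paper simply cites \cite{B4} for the representation type of $\widehat{\A}(p)_m$ and remarks that the result is ``also easily recovered by arguments similar to the proof of Lemma~\ref{prop:reptypeGeneral},'' which is precisely the knitting/BHV route you take. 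So you are giving a self-contained version of the alternative the paper mentions in passing.

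Two caveats on the details. First, for $(p,n-1)\in\{(2,2),(3,2),(2,3)\}$ the appendix AR quivers do suffice, because the corresponding grid representations form a full subcategory of $\rep(\widehat{\Q}_{p,m},\widehat{I}_{p,m}(x))$ for the $(p,x)$ covered by Appendices~\ref{app:p2x2}--\ref{app:p2x3} (the nilpotency relation is vacuous on two, resp.\ three, rows), so local representation-finiteness of $\widehat{\A}(2,2)$, $\widehat{\A}(3,2)$, $\widehat{\A}(2,3)$ does imply representation-finiteness of those grids. However, for $(4,2)$ and $(2,4)$ this shortcut is not available, since $\widehat{\A}(4,2)$ and $\widehat{\A}(2,4)$ are representation-\emph{infinite} (Proposition~\ref{prop:reptypeGeneral}); one must knit the $2\times 4$ grid from scratch rather than ``one additional step.'' Second, the specific extended Dynkin labels you attach to the minimal infinite cases ($\widetilde{E}_6$ for the $3\times 3$ grid, $\widetilde{D}_6$ for the $5\times 2$ grid) should not be read as full subquivers of the grid free of relations; the commutativity relations are in force, so what one really exhibits is a BHV-type bound-quiver configuration, exactly as the paper does in Proposition~\ref{prop:reptypeGeneral} and in the displayed families in the remark after Proposition~\ref{prop:class_inf} (note that those displayed families are precisely one-parameter families on the $3\times 3$, $5\times 2$, and $4\times 3$ grids, which already settles your minimal infinite cases). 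With these adjustments your argument closes.
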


\begin{proof}
Since the representation type of $\Fa(\Delta)=\rep^{\inj}(\widehat{\Q}_{p,n},\widehat{I}_{p,n})$ is the same as the one of $\Ha(T)$, the proof follows from \cite{B4}, where the representation type of $\widehat{\A}(p)_n$ is discussed. This last result is also easily recovered by arguments similar to the proof of Lemma \ref{prop:reptypeGeneral}.
\end{proof}

We end the examination of $\Ha(T)$ by considering a particular example for which we discuss $\Ha(T)$ by means of the Auslander-Reiten quiver of $\widehat{\A}(p)_n$.
\begin{example}
Let $p=2$ and $n=3$. The Auslander-Reiten quiver of $\widehat{\A}(2)_3$ is given by\\[1ex]

\setlength{\unitlength}{0.68mm}
\begin{picture}(44,20)(-90,-30)
    \put(-70,-36){\fbox{\begin{tiny}$\begin{array}{l}
0 0 \\ 
0 0 \\
0 1\end{array}$\end{tiny}}}\put(-63,-42){$\searrow$}\put(-63,-30){$\nearrow$}
    \put(-60,-24){\fbox{\begin{tiny}$\begin{array}{l}
0 0 \\ 
0 0 \\
1 1\end{array}$\end{tiny}}}\put(-53,-30){$\searrow$}
    \put(-60,-48){\fbox{\begin{tiny}$\begin{array}{l}
0 0 \\ 
0 1 \\
0 1\end{array}$\end{tiny}}}\put(-53,-42){$\nearrow$}\put(-53,-54){$\searrow$}
    \put(-50,-36){\fbox{\begin{tiny}$\begin{array}{l}
0 0 \\ 
0 1 \\
1 1\end{array}$\end{tiny}}}\put(-43,-42){$\searrow$}\put(-43,-30){$\nearrow$}\put(-43,-36){$\rightarrow$}
    \put(-50,-60){\begin{tiny}$\begin{array}{l}
\mathbf{01} \\ 
\mathbf{01} \\
\mathbf{01}\end{array}$\end{tiny}}\put(-43,-54){$\nearrow$}
    \put(-40,-24){\begin{tiny}\fbox{$\begin{array}{l}
0 0 \\ 
0 1 \\
0 0\end{array}$}\end{tiny}}\put(-33,-30){$\searrow$}
    \put(-40,-36){\fbox{\begin{tiny}$\begin{array}{l}
0 0 \\ 
1 1 \\
1 1\end{array}$\end{tiny}}}\put(-33,-36){$\rightarrow$}
    \put(-40,-48){\begin{tiny}$\begin{array}{l}
0 1 \\ 
0 1 \\
1 1\end{array}$\end{tiny}}\put(-33,-42){$\nearrow$}\put(-33,-54){$\searrow$}
\put(-30,-36){\begin{tiny}$\begin{array}{l}
0 1 \\ 
1 2\\
1 1\end{array}$\end{tiny}}\put(-23,-30){$\nearrow$}\put(-23,-42){$\searrow$}\put(-23,-36){$\rightarrow$}
    \put(-30,-60){\fbox{\begin{tiny}$\begin{array}{l}
0 0 \\ 
0 0 \\
1 0\end{array}$\end{tiny}}}\put(-23,-54){$\nearrow$}
    \put(-20,-24){\begin{tiny}$\begin{array}{l}
0 1 \\ 
1 1\\
1 1\end{array}$\end{tiny}}\put(-13,-18){$\nearrow$}\put(-13,-30){$\searrow$}
    \put(-20,-36){\begin{tiny}$\begin{array}{l}
0 1 \\ 
0 1 \\
0 0\end{array}$\end{tiny}}\put(-13,-36){$\rightarrow$}
    \put(-20,-48){\fbox{\begin{tiny}$\begin{array}{l}
0 0 \\ 
1 1 \\
1 0\end{array}$\end{tiny}}}\put(-13,-42){$\nearrow$}\put(-13,-54){$\searrow$}
  \put(-10,-12){\begin{tiny}$\begin{array}{l}
\mathbf{11} \\ 
\mathbf{11} \\
\mathbf{11}\end{array}$\end{tiny}}\put(-3,-18){$\searrow$}\put(-3,-36){$\rightarrow$}
\put(-10,-36){\begin{tiny}$\begin{array}{l}
0 1 \\ 
1 1 \\
1 0\end{array}$\end{tiny}}\put(-3,-30){$\nearrow$}\put(-3,-42){$\searrow$}
    \put(-10,-60){\fbox{\begin{tiny}$\begin{array}{l}
0 0 \\ 
1 1 \\
0 0\end{array}$\end{tiny}}}\put(-3,-54){$\nearrow$}
    \put(0,-24){\begin{tiny}$\begin{array}{l}
1 1 \\ 
1 1\\
1 0\end{array}$\end{tiny}}\put(7,-30){$\searrow$}
    \put(0,-36){\fbox{\begin{tiny}$\begin{array}{l}
0 0 \\ 
1 0 \\
1 0\end{array}$\end{tiny}}}\put(7,-36){$\rightarrow$}
    \put(0,-48){\begin{tiny}$\begin{array}{l}
0 1 \\ 
1 1 \\
0 0\end{array}$\end{tiny}}\put(7,-42){$\nearrow$}\put(7,-54){$\searrow$}

\put(10,-36){\begin{tiny}$\begin{array}{l}
1 1 \\ 
2 1 \\
1 0\end{array}$\end{tiny}}\put(17,-30){$\nearrow$}\put(17,-42){$\searrow$} \put(17,-36){$\rightarrow$}
    \put(10,-60){\begin{tiny}$\begin{array}{l}
\mathbf{01} \\ 
\mathbf{00} \\
\mathbf{00}\end{array}$\end{tiny}}\put(17,-54){$\nearrow$}
    \put(20,-24){\fbox{\begin{tiny}$\begin{array}{l}
0 0 \\ 
1 0 \\
0 0\end{array}$\end{tiny}}}\put(27,-30){$\searrow$}
    \put(20,-36){\begin{tiny}$\begin{array}{l}
\mathbf{11} \\ 
\mathbf{11} \\
\mathbf{00}\end{array}$\end{tiny}}\put(27,-36){$\rightarrow$}
    \put(20,-48){\begin{tiny}$\begin{array}{l}
1 1 \\ 
1 0 \\
1 0\end{array}$\end{tiny}}\put(27,-42){$\nearrow$}\put(27,-54){$\searrow$}
\put(30,-36){\begin{tiny}$\begin{array}{l}
1 1 \\ 
1 0 \\
0 0\end{array}$\end{tiny}}\put(37,-30){$\nearrow$}\put(37,-42){$\searrow$}
    \put(30,-60){\begin{tiny}$\begin{array}{l}
1 0 \\ 
1 0 \\
1 0\end{array}$\end{tiny}}\put(37,-54){$\nearrow$}
    \put(40,-24){\begin{tiny}$\begin{array}{l}
\mathbf{11} \\ 
\mathbf{00} \\
\mathbf{00}\end{array}$\end{tiny}}\put(47,-30){$\searrow$}
    \put(40,-48){\begin{tiny}$\begin{array}{l}
1 0 \\ 
1 0 \\
0 0\end{array}$\end{tiny}}\put(47,-42){$\nearrow$}

\put(50,-36){\begin{tiny}$\begin{array}{l}
1 0 \\ 
0 0 \\
0 0\end{array}$\end{tiny}}
\end{picture}\label{ark222}\rule{3mm}{0mm} \vspace{2.9cm}\\
The modules $T(i,j)$ are marked by bold dimension vectors; and the modules which belong to the category $\Ha(T)$ are marked by boxes.
\end{example}

\section{Finite cases}\label{sect:finite_cases}
In this section, we prove finiteness of all remaining cases which do not appear, up to symmetry, in diagram \ref{app:inf_case_diag}, cf. Proposition \ref{prop:class_inf}. 
 \begin{theorem}\label{thm:fin_cases}
The parabolic $P$ acts finitely on $\N_{\pp}$ if $\bv_{P}=(b_1,\dots,b_p)$ or $\bv_{({}^t\!P)}=(b_p,\dots,b_1)$ appears in diagram \ref{app:fin_case_diag}. 
 \end{theorem}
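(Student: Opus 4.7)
The plan is to reduce the proof to a small list of maximal base cases using the reduction lemmas of Section \ref{ssect:reductions}. By the contrapositive of Lemma \ref{lem:red_induction}, the set of block-vectors for which $P$ acts finitely is downward-closed under the partial order $\leq_c$, and by Lemma \ref{lem:red_symmetry} it is closed under reversal. Consequently, to establish finiteness for every block vector in diagram \ref{app:fin_case_diag}, it suffices to verify it for the $\leq_c$-maximal vectors modulo reversal. As indicated in the introduction, this boils down to four base cases.

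For each base case, the plan is to use the translation of Lemma \ref{bijection}: the $P$-orbits in $\N_{\pp}$ correspond bijectively to the isomorphism classes in $\rep^{\inj}(\Q_p,I_n)(\dfp)$. Whenever the parameters $(p,n)$, with $n=b_1+\cdots+b_p$, fall into the finite list of Proposition \ref{prop:classifFDelta}, the category $\Fa(\Delta)=\rep^{\inj}(\widehat{\Q}_{p,n},\widehat{I}_{p,n})$ has only finitely many indecomposables. By Krull--Schmidt each dimension vector then has only finitely many isomorphism classes in $\Fa(\Delta)$, and Proposition \ref{prop:F_lambda}(1)(3) shows that the push-down functor $F_\lambda$ sends these to a finite list of indecomposables exhausting $\rep^{\inj}(\Q_p,I_n)(\dfp)$ after taking direct sums; Krull--Schmidt once more gives the finiteness of the orbit classification.

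For the base cases in which $n$ is too large for Proposition \ref{prop:classifFDelta} to apply directly (typically a parabolic with a single large block, together with a few blocks of size one or two), the plan is a direct base-change argument inside the representation $\bigl((K^{d_i})_i,(\epsilon_i),(N_i),N\bigr)$ of Lemma \ref{bijection}. Using the action of $\GL_{\dfp}$ one normalizes the embeddings $\epsilon_i$ to the canonical inclusions of the flag $K^{d_1}\subset K^{d_2}\subset\cdots\subset K^n$, so the problem reduces to classifying $P$-orbits of nilpotent endomorphisms $N$ respecting this flag. The Jordan type of $N$ together with the intersection pattern of its Jordan basis with each flag step is encoded by combinatorial data (Young-diagram-like) on which $P$ acts through a finite group of permutations, and a finite case analysis shows that only finitely many such configurations occur. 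This is the step where the visualisation suggested by K. Bongartz enters.

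The main obstacle is precisely this last combinatorial step: one must show, for each of the four base cases, that the reduction of $P$-conjugacy to the flag-refined Jordan combinatorics really does have only finitely many orbits, even though $n$ is unbounded. The delicate point is that the flag restriction of a nilpotent can produce intricate Jordan-type stratifications, and one has to check by hand that no one-parameter family escapes the normalization. The other ingredients (reduction order, translation to quivers, covering finiteness) are essentially bookkeeping once the four base cases are isolated.
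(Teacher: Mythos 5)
Your reduction strategy matches the paper's: one passes from an arbitrary vector in diagram \ref{app:fin_case_diag} down to a handful of maximal families using Lemmas \ref{lem:red_induction}, \ref{lem:red_symmetry} (and, as the diagram also uses, Lemma \ref{lem:red_subgroup}), and the four families $(5,k,1)$, $(1,3,k,1)$, $(3,1,k,1)$, $(1,1,1,k,1)$ are indeed the base cases the paper isolates in Lemma \ref{lem:fin_cases}. But two things in your write-up go wrong.

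First, the detour through Proposition \ref{prop:classifFDelta} and the push-down functor $F_\lambda$ cannot contribute anything here: each of the four base cases has $p\in\{3,4,5\}$ and $n$ unbounded, so $(p,n)$ is never in the finite list of that proposition. Moreover, even when $\Fa(\Delta)=\rep^{\inj}(\widehat{\Q}_{p,n},\widehat{I}_{p,n})$ is representation-finite, Proposition \ref{prop:F_lambda} only gives that $F_\lambda$ is injective on indecomposables and lands inside $\rep^{\inj}(\Q_p,I_n)$; it does not say that every indecomposable of $\rep^{\inj}(\Q_p,I_n)$ lies in the image. The surjectivity/density statement from covering theory requires local representation-finiteness of the full covering algebra, which fails exactly when you need it. So this branch of your argument is a red herring.

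Second, and more seriously, the entire burden of the proof sits in the step you describe in one sentence as ``a finite case analysis shows that only finitely many such configurations occur.'' That sentence is precisely Lemma \ref{lem:fin_cases}, and proving it occupies Subsections \ref{ssect_fin_notation}--\ref{ssect:fin_further}: the paper introduces labeled Young diagrams attached to pairs $(f_{|U}\subset f)$, a calculus of admissible base changes (Tools \ref{tool1}, \ref{tool2}), a case split on the Jordan type $\boldmu$ of $f_{|U}$ (Proposition \ref{prop_reduced_cases}), and then a further layer to handle the extra linear functional or subspace coming from the third/fourth block. None of this is bookkeeping; it is where the constraint $b_i\leqslant 5$ (or the specific patterns $(1,3,k,1)$, etc.) is used to guarantee the pivot/kill procedure terminates with all entries in $\{0,1\}$. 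Your proposal correctly names this as the obstacle but does not supply the argument, so the proof is not complete. In particular, ``$P$ acts on the Jordan combinatorics through a finite permutation group'' is not an accurate description of what happens: the stabilizer base changes form a positive-dimensional group and the normalization is achieved by the explicit moves $M$, $C$, $B$, $D$, $E$, not by a finite permutation action.
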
 
Via reductions of Section~\ref{ssect:reductions}, as visualized in diagram \ref{app:fin_case_diag}, the following lemma directly proves Theorem \ref{thm:fin_cases}.
\begin{lemma}\label{lem:fin_cases}
Let $\bfp\in\{(5,k,1) , (1,3,k,1) , (3,1,k,1) , (1,1,1,k,1)|\, k\in \bN^*\}$. Then the number of isomorphism classes in $\rep^{\inj}(\Q_p,I_n)(\dfp)$ is finite.
\end{lemma}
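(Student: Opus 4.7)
The plan is to treat the four block tuples $\bfp\in\{(5,k,1),(1,3,k,1),(3,1,k,1),(1,1,1,k,1)\}$ in turn by producing, for each $k$, an explicit finite list of canonical representatives in $\rep^{\inj}(\Q_p,I_n)(\dfp)$, using the base-change normalisation ideas acknowledged to K.~Bongartz and visualising the bookkeeping by Young-tableau data.

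First I would pass to the equivalent problem of classifying isomorphism classes of tuples $((K^{d_i}),(N_i)_{1\leq i\leq p},(\epsilon_i))$ with $N_i$ nilpotent, $\epsilon_i$ injective, and $\epsilon_i N_i = N_{i+1}\epsilon_i$, modulo independent base change at each vertex. Using the full $\GL_{\dfp}$-action this is more flexible than the $P$-action on $\N_\pp$ (which only allows a compatible base change). At the terminal vertex I would first bring $N=N_p$ to Jordan normal form; the Jordan type of $N$ is a partition of $n=\sum b_i$, of which there are only finitely many. After fixing such a Jordan form, the residual freedom at $K^n$ is conjugation by the centraliser $Z_{\GL_n}(N)$, and the classification reduces to counting $Z_{\GL_n}(N)$-orbits of $N$-invariant flags $0\subset V_{p-1}\subset \cdots \subset V_1\subset K^n$ with $\dim V_i=d_i$ (the endomorphisms $N_i$ being determined as the restrictions $N|_{V_i}$).

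Second, for each chosen Jordan type $\lambda$ of $N$, I would encode an $N$-invariant flag by its combinatorial shadow on the Young diagram of $\lambda$: for each Jordan chain, record the positions in the chain where each $V_i$ begins. The centraliser $Z_{\GL_n}(N)$ acts on these shadows by a combinatorial equivalence relation (shifting of rows of equal length, ``columnwise'' rearrangements), and the canonical representatives arrange themselves into finitely many decorated Young-diagram patterns. Processing the flag from the outermost step $V_{p-1}$ inwards yields normal forms whose combinatorial content is easily enumerated.

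The main obstacle is the second phase: in general, orbits of parabolic conjugation on nilpotent subvarieties are of Spaltenstein/Steinberg type and can carry continuous families (this is precisely what fails in the infinite cases of Proposition~\ref{prop:class_inf}), so finiteness here is not automatic. The decisive input is the smallness of the outer blocks. In the case $(1,1,1,k,1)$ the three successive one-dimensional layers force rigid incidence data with the Jordan chains of $N$, and direct enumeration of compatible patterns yields finiteness for every~$\lambda$. The cases $(1,3,k,1)$ and $(3,1,k,1)$ require a slightly more involved bookkeeping but are of the same flavour, the block of size $3$ contributing only a bounded number of sub-patterns. The case $(5,k,1)$ is combinatorially heaviest and I would split it further according to the $p(5)=7$ possible Jordan types visible inside the size-$5$ block; for each subcase a finite decoration of the Young diagram of $N$ suffices. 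Writing the four verifications out and checking that none of them produces a continuous family is the core of the proof.
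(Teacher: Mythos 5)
Your proposal is a plan, not a proof: you identify the right overall strategy — pass to the quiver picture, fix the Jordan type of $N=N_p$, then classify the remaining data combinatorially via Young-diagram bookkeeping — but you explicitly defer the decisive verifications (``Writing the four verifications out and checking that none of them produces a continuous family is the core of the proof''). Those verifications are precisely what the paper spends Subsections~\ref{ssect_fin_notation}--\ref{ssect:fin_further} carrying out: the labeled Young diagrams of Subsection~\ref{ssect_fin_notation} together with Tools~\ref{tool1} and~\ref{tool2}, Proposition~\ref{prop_reduced_cases} (the $\dim U\leq 5$ reduction), and then the compatibility of those base changes with the extra data attached in each of the four cases. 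Without them the argument is a sketch.

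Two substantive issues with the outline beyond incompleteness. First, the proposed invariant is underspecified in a way that hides the actual difficulty: recording ``the positions in the chain where each $V_i$ begins'' is a purely combinatorial datum, and this is \emph{not} a complete invariant for the centraliser action — the continuous moduli that appear in the infinite cases of Proposition~\ref{prop:class_inf} live exactly in the coefficients of the embeddings, not in the incidence positions. The paper handles this by putting Jordan bases on \emph{both} $U$ and $V$ simultaneously and recording the genuine coefficient tuples $\gamma^m_{i,j}$ in the boxes of the diagram; the whole content of Proposition~\ref{prop_reduced_cases} is that for $\dim U\leq 5$ these coefficients can be normalised to $\{0,1\}$. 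Your ``decorated Young-diagram patterns'' would need to be this finer object. Second, your processing scheme (outermost step inward, directly on the full flag) is different from the paper's decomposition, which first solves the two-term case $(U,V,f)$ completely and then treats the extra outer or inner step as an \emph{additional} piece of data (a covector $\varphi\in V^*$, an element $u'\in U\cap\Ker f$, a covector on $U$, or nested subspaces), proving that the base changes used in the two-term reduction can be taken to preserve that extra data up to scalar. That compatibility statement is the technical crux, and your outline gives no analogue of it; it is not at all obvious that an ``outside-in'' direct enumeration would avoid the same issue rather than run into it repeatedly.
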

The remainder of the section is dedicated to proving Lemma \ref{lem:fin_cases}.  Note that some of the remaining cases are known to be finite. Namely, the case $\bfp=(1,1,1,1,1)$ is proved to be finite by L. Hille and G. R\"ohrle in \cite{HiRoe}. Also, the case $\bfp=(5,k)$ has been shown to be finite by S. Murray \cite{Mur}. Independently, the cases $\bfp\in\{(1,n-1), (2,n-2)\}$ have been proved to be finite by the second author and L. Evain in \cite{BE}. 

The proof of Lemma \ref{lem:fin_cases} is structured as follows:  We begin by re-proving Murray's case $\bfp=(5,k)$ in Subsection \ref{ssect_fin_reductions} and make use of certain techniques which will be introduced in Subsections \ref{ssect_fin_notation} and \ref{ssect_base_changes}. Afterwards, we generalize these results to the four cases of Lemma \ref{lem:fin_cases} in Subsection \ref{ssect:fin_further}. 


\subsection{Notation}\label{ssect_fin_notation}

We introduce first the combinatorial data which is the central object of study in the remaining of this section.
We associate to any partition $\bolda:=(\lambda_1\geqslant \dots\geqslant\lambda_g)$ of $n$ the corresponding left-justified Young diagram with $\lambda_i$ boxes in the $i$-th row. The box in the $i$-th row and the $j$-th column is referred to as box $(i,j)$.
\begin{definition}
Given $h\in \mathbb{N}^*$, a \emph{labeled Young diagram} of $h$-tuples is a Young diagram together with an $h$- tuple $(\gamma_{i,j}^1, \dots, \gamma_{i,j}^h)$ associated to each box $(i,j)$ in the Young diagram.
\end{definition}

Given an element of $\rep^{\inj}(\Q_2, I_{n})$ of dimension $(l,n)$

\vspace{-1ex}
\begin{equation}
\begin{tikzpicture}[descr/.style={fill=white,inner sep=2.5pt}]
\matrix (m) [matrix of math nodes, row sep=0.05em,
column sep=2em, text height=1.5ex, text depth=0.2ex]
{ U & V \\ };
\draw [right hook-latex] (m-1-1) -- (m-1-2);
\path[->]
(m-1-1) edge [loop left] node{$f_{|U}$} (m-1-1)
(m-1-2) edge [loop right] node{$f$} (m-1-2)
;\end{tikzpicture}\label{fUV}
\end{equation}
\vspace{-3ex}\\
we can construct a corresponding labeled Young diagram as follows. 

We choose a basis of $V$ (resp. $U$) 
such that $f$ (resp. $f_{|U}$) is in Jordan normal form with partition $\bolda=(\lambda_1\geqslant \dots\geqslant \lambda_g)$  (resp. $\boldmu=(\mu_1\geqslant \dots \geqslant \mu_h)$) of $k$ (resp. $l$)  in a basis of $V$ (resp. $U$) of the form $(v_{i,j})_{\begin{subarray}{l}i\leqslant g\\ j\leqslant \lambda_i\end{subarray}}$ (resp. $(u_{m,t})_{\begin{subarray}{l}m\leqslant h\\ t\leqslant \mu_m\end{subarray}}$). That is, 
\[f(v_{i,j})=\left\{\begin{array}{l l}v_{i,j-1}& \textrm{if $j\geqslant 2$}\\ 0 & \textrm{else}\end{array}\right., \quad f(u_{m,t})=\left\{\begin{array}{l l}u_{m,t-1}& \textrm{if $t\geqslant 2$}\\ 0 & \textrm{else}\end{array}\right..\]
For each $i$, we set $v_i:=v_{i,\lambda_i}$ (resp. $u_m:=u_{m, \mu_m}$). We consider the decomposition $u_m=\sum_{i,j} \gamma^m_{i,j} v_{i,j}$. Then the corresponding labeled Young diagram is the Young diagram associated to $\bolda$ together with an $h$-tuple $\gamma_{i,j}:=(\gamma_{i,j}^1, \dots,\gamma_{i,j}^h)\in W:=K^h$ associated to each box $(i,j)$. 
\begin{example}\label{ex:lab_YT}
Let $\bolda:=(4,2,1)$ and $\boldmu=(2,1)$. Assume that $u_1=2v_{1,2}-3v_{1,1}-4v_{2,2}+5v_{2,1}+v_{3,1}$, $u_{2}=6v_{1,1}-7v_{2,1}+v_{3,1}$, then we obtain the labeled Young diagram
\begin{equation*}\begin{array}{c}\stackrel{f}{\leftarrow}\\
\scalebox{1.5}{
\begin{Young}
\scalebox{0.4}{$(-3,6)$}& \scalebox{0.5}{$(2,0)$}&\scalebox{0.5}{$(0,0)$}& \scalebox{0.5}{$(0,0)$}\cr
\scalebox{0.4}{$(5,-7)$}& \scalebox{0.4}{$(-4,0)$}\cr
\scalebox{0.5}{$(1,1)$}\cr
\end{Young}}\end{array}
\end{equation*}
\end{example}
The $\gamma^m_{i,j}$ are not unique in general. However, since $U=\langle f^t(u_m)\rangle_{t,m}$, they are enough to recover the isomorphism class of the original representation. We will prove in  Subsection \ref{ssect_fin_reductions} that, up to $\GL(U)\times\GL(V)$-conjugacy, the $\gamma^m_{i,j}$ can all be taken in $\{0,1\}$ whenever $l=\dim U\leqslant 5$. This implies that there are finitely many non-isomorphic representations in this case. 


\subsection{Change of basis}\label{ssect_base_changes}
The aim of this subsection is to introduce some combinatorial elementary moves on labeled Young diagrams which will be used in \ref{tool1} and \ref{tool2}. The procedure consists in \emph{reducing} the labeled Young diagram. That is, we will use some non-zero entries $\gamma_{i,j}^m$ as \emph{pivots} in order to \emph{kill} (that is, to bring to $0$ by a base change) some other entries.
The following proposition is well known \cite{Ho}.
\begin{proposition}\label{prop_adm_bc}
Performing a base change of the form
\begin{equation}\left(v_{i,j}\leftarrow \sum_{i',j'} \omega_{i,j}^{i',j'}v_{i',j'}\right)_{i,j}\label{basechange}\qquad \forall i,j,i',j': \;\omega_{i,j}^{i',j'}\in K\end{equation}
keeps $f$ in Jordan normal form if and only if
\begin{equation}\label{stab}\omega_{i,j}^{i',j'}=\left\{\begin{array}{l l}
0&\textrm{ if $j'> j$ or $\lambda_i-j>\lambda_{i'}-j'$ }\\
\omega_{i,j-1}^{i',j'-1}&\textrm{ if $2\leqslant j' \leqslant j$ and $\lambda_i-j\leqslant \lambda_{i'}-j'$}\end{array}\right.\end{equation}
\end{proposition}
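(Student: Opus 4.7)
My plan is to reformulate the condition ``the base change keeps $f$ in Jordan normal form'' in terms of commutativity with $f$. Let $g \in \End(V)$ be the linear map defined by $g(v_{i,j}) := v'_{i,j} = \sum_{i',j'} \omega_{i,j}^{i',j'} v_{i',j'}$. Preserving the Jordan form means exactly that $f(v'_{i,j}) = v'_{i,j-1}$ for $j \geq 2$ and $f(v'_{i,1}) = 0$, which on the generators $v_{i,j}$ reads as $fg = gf$. So Proposition \ref{prop_adm_bc} reduces to making explicit, in the chosen Jordan basis, the centralizer of the nilpotent Jordan endomorphism $f$ in $\End(V)$.

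The next step is to expand both sides of $fg(v_{i,j}) = gf(v_{i,j})$ in the basis $(v_{i',j'})$. On the one hand $gf(v_{i,j}) = g(v_{i,j-1}) = \sum_{i',j'} \omega_{i,j-1}^{i',j'} v_{i',j'}$, with the convention that this expression vanishes for $j=1$ (encoding $f(v_{i,1})=0$). On the other hand $fg(v_{i,j}) = \sum_{i'} \sum_{j' \geq 2} \omega_{i,j}^{i',j'} v_{i',j'-1}$. Matching the coefficient of $v_{i',k}$ for every admissible $k \in \{1,\dots,\lambda_{i'}\}$ yields three elementary relations: (i) $\omega_{i,1}^{i',j'} = 0$ for $j' \geq 2$; (ii) the recursion $\omega_{i,j-1}^{i',j'} = \omega_{i,j}^{i',j'+1}$ for $j \geq 2$ and $1 \leq j' \leq \lambda_{i'}-1$; and (iii) $\omega_{i,j-1}^{i',\lambda_{i'}} = 0$ for $j \geq 2$.

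The last step is the combinatorial translation of (i)--(iii) into the form stated in the proposition. Relation (ii) is directly the recursion $\omega_{i,j}^{i',j'} = \omega_{i,j-1}^{i',j'-1}$ of the second clause. For the vanishing part, I would use (ii) to slide the pair $(j,j')$ along a diagonal until it hits one of the boundary conditions: iterating (ii) toward smaller indices, one reaches $\omega_{i,j}^{i',j'} = \omega_{i,1}^{i',j'-j+1}$ whenever $j' > j$, which vanishes by (i) since $j'-j+1 \geq 2$; iterating in the opposite direction, one reaches $\omega_{i,j}^{i',j'} = \omega_{i, j+(\lambda_{i'}-j')}^{i', \lambda_{i'}}$ whenever $\lambda_i - j > \lambda_{i'} - j'$, and this vanishes by (iii) since then $j + (\lambda_{i'} - j') < \lambda_i$. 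The converse direction, that any $g$ whose coefficients satisfy the stated conditions commutes with $f$, follows from exactly the same computations read in reverse. The only delicate point is keeping careful track of the valid range of indices during the sliding; there is no real conceptual obstacle, since this is the standard coordinate description of the centralizer of a nilpotent Jordan endomorphism.
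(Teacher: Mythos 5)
The paper does not actually prove Proposition \ref{prop_adm_bc}; it merely cites it as well known, referring to \cite{Ho}. So there is no internal argument to compare your proposal against. Your proof is correct and self-contained: reformulating the preservation of Jordan form as $fg=gf$ with $g(v_{i,j})=v'_{i,j}$ is exactly the right move, the coefficient-matching giving relations (i)--(iii) is carried out correctly, and the two ``sliding'' arguments (down to the boundary $j'=1$, or up to the boundary $j'=\lambda_{i'}$) do correctly convert (i)--(iii) into the stated conditions, with the index ranges working out: for $j'>j$ the diagonal hits $(1,j'-j+1)$ with second index at least $2$, while for $\lambda_i-j>\lambda_{i'}-j'$ it hits $(j+\lambda_{i'}-j',\lambda_{i'})$ with first index at most $\lambda_i-1$, so the boundary vanishing conditions apply in both cases. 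The only point worth making explicit is the implicit convention that a ``base change'' also requires $g$ to be invertible; your argument characterizes the full centralizer of $f$, of which the base changes in question form the group of units, and this matches how the proposition is used in the paper.
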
 

Note that the $(\omega_{i,\lambda_i}^{i',j'})_{i,i',j'}$ are enough to determine all the $(\omega_{i,j}^{i',j'})_{i,j,i',j'}$ thanks to condition \eqref{stab}. In particular, we can afford setting $\omega_{i}^{i',j'}:=\omega_{i, \lambda_i}^{i',j'}$ and writing the base change \eqref{basechange} via $\left(v_{i}\leftarrow \sum_{i',j'} \omega_{i}^{i',j'}v_{i',j'}\right)_{i}$. A bit more generally, given indices $j_i$ for each $i$, we define a base change of the form \eqref{basechange} by the formula \begin{equation*}\label{basechange2}\left(v_{i,j_i}\leftarrow \sum_{i',j'} \omega_{i}^{i',j'}v_{i',j'}\right)_{i}, \quad \textrm{ setting } \omega_{i,j}^{i',j'}:=\left\{\begin{array}{l l}\omega_i^{i', j'+j_i-j} &\textrm{ if $1\leqslant j'+j_i-j\leqslant \lambda_{i'} $}\\ 0&\textrm{else.}  \end{array}\right.\end{equation*}

Upon above notation and for any fixed $i,j$, the base changes of the following forms always satisfy condition \eqref{stab}
\begin{equation*}
M_i:\quad v_i\leftarrow \omega v_i, \quad  \textrm{ with } \omega\in K^*
\end{equation*}
\begin{equation}
C_{i,j}:\quad   v_{i,j}\leftarrow 
v_{i,j}+\!\!\!\sum_{\begin{subarray}{c}i'\leqslant i \\ j'\leqslant j\\(i',j')\neq (i,j) \end{subarray}} \!\!\!\!\omega^{i',j'} v_{i',j'}, \qquad \forall i',j': \omega^{i',j'}\in K \label{Cij}
\end{equation}
and other $v_{i'}$ ($i'\neq i$) unchanged. The same holds for the following base change for any $(i_0,j_0), (i_1,j_1)$ such that $i_0<i_1$ and $j_0>j_1$.
\begin{equation*}
B_{(i_0,j_0),(i_1,j_1)}: \quad \left(\begin{array}{c}v_{i_0,j_0}\leftarrow v_{i_0,j_0} + \omega v_{i_0,j_1} \\
v_{i_1,j_1}\leftarrow v_{i_1,j_1} - \omega v_{i_0,j_1}
\end{array}\right), \quad \omega\in K
\end{equation*}

\begin{tools}\label{tool1}
These base changes have interesting effects in our situation. We explain some of them on the coefficients $\gamma^m_{i,j}$ and pictorially on diagrams drawn as in \ref{ex:lab_YT}.
\begin{enumerate}
\item[{\bf M}] $M_i$ allows to multiply $(\gamma^m_{i,j})_{m,j}$ by $\frac{1}{\omega}$. Pictorially, this means that we can multiply row $i$ in a diagram by any non-zero scalar.
\item[{\bf C}] Assume that $\gamma^m_{i,j}=1$ for some $m,i,j$ and $\gamma_{i,j''}=0$ for any $j''>j$. A base change of the form $C_{i,j}$ allows to set $\gamma^m_{i',j'}$ to $0$ for any $(i',j')\neq (i,j)$ such that $i'\leqslant i$, $j'\leqslant j$ without modifying $\gamma_{i',j'}$ for any other couple $(i',j')$ (take $\omega^{i',j'}:=\gamma^m_{i',j'}$ in \eqref{Cij}). Moreover, if for some $m'\neq m$ we have $\gamma^{m'}_{i,j'}=0$ for any $j'$, then the $\gamma^{m'}_{i',j'}$ are not modified for any $i',j'$. 
Pictorially, this means the following. Assume that the rightmost non-zero tuple in row $i$ appears in column $j$ and is of the form $(*,*,\dots ,1,*,\dots,*)$, then the $m$-th entry of any box in the quadrant northwest to $(i,j)$ can be killed while the tuples outside this quadrant are unchanged.  Moreover, if all the $m'$-th entries (for some $m'\neq m$) are zero in row $i$, then no $m'$-th entry is modified in the labeled Young diagram. 
\item[{\bf B}] Assume that $(i_0,j_0), (i_1,j_1)$ are such that $i_0<i_1$, $j_0>j_1$, $\gamma_{i,j}=0$ for any $i,j$ such that $i=i_0$, $j\notin \{1,j_0\}$ or $i=i_1$, $j\notin \{1,j_1\}$. Assume also that $\gamma_{i_0,j_0}=\gamma_{i_1,j_1}$. A base change of the form $B_{(i_0,j_0),(i_1,j_1)}$ preserves all the tuples $\gamma_{i,j}$ except when $i=i_0$, $j=1$ where the base change implies
$\gamma_{i_0,1}\leftarrow \gamma_{i_0,1}+\omega \gamma_{i_1,1}$
\end{enumerate}
\end{tools}

Naturally, Proposition \ref{prop_adm_bc} and subsequent remarks also hold with $(u_{m,t})_{m,t}$ instead of $(v_{i,j})_{i,j}$ and $\boldmu$ instead of $\bolda$. The effect on the $\gamma^m_{i,j}$ is easier to describe. For instance, a base change of the form $\left(u_{m}\leftarrow \sum_{m'} \omega_{m}^{m'}u_{m'}\right)_{m}$ induces an action on $W$, the space of tuples, as follows: for each $(i,j)$ we get $\gamma_{i,j}\leftarrow A\gamma_{i,j}$ where $A$ is the $h\times h$-matrix whose entry in line $m$ and column $m'$ is $\omega_{m}^{m'}$. 

We describe below a few base changes of interest which always satisfy condition \eqref{stab}.
Given $s\in [\![1,h]\!]$, we define $W_s$ as the subspace of $W$ generated by the first $s$ vectors of the canonical basis $(e_1,\dots, e_h)$ and set $W_0:=\{0\}$. Given $j$, define
\begin{equation}\label{base_change_D}
 D_{j}:\left(u_{m}\leftarrow \sum_{m'} \omega_{m}^{m'}u_{m'}\right)_{m} \textrm{ with }\left\{\begin{array}{l} S=[\![m_1,\dots, m_2]\!]:=\{m\,|\,\mu_m=j\},\\ A:=(\omega_m^{m'})_{m,m'}\in \GL_{h}, \\ \forall s\notin S,\,A e_s=e_s,\; \forall s\in S,\,A e_s\in W_{m_2}. \end{array}\right.
\end{equation}
Given $m$ and $j<\mu_{m}$, we define
\begin{equation*}
E_{m,j}:\left(u_{m'}\leftarrow u_{m'}+
\omega_{m'}u_{m,j} \right)_{m'\,|\,\mu_{m'}\geqslant j}, \quad \forall m': \, \omega_{m'}\in K
\end{equation*}
and such that every remaining $u_{m'}$ is unchanged.
\begin{tools}\label{tool2}
These base changes allow the following actions on the coefficients $\gamma_{i,j}^{m}$.
\begin{enumerate}
\item[{\bf D}] 
Given $(i_{m_1},j_{m_1}),\dots, (i_{m_1+p},j_{m_1+p})$ such that the $p$ different  $(\sharp S)$-tuples $(\gamma^m_{i_s,j_s})_{m\in S}$ are linearly independant, $D_{j}$ allows to set each $(\gamma^m_{i_s,j_s})_{m\in [\![1,m_2]\!]}\in W_{m_2}$ to $e_s\in W_{m_2}$, stabilizing the $e_{s'}$ for $s'\notin S$.
\item[{\bf E}] Given $m$, assume that there exists exactly one index $i$ such that $\gamma^{m}_ {i,\mu_{m}}$ is non-zero (note that $\gamma^m_{i',j'}=0$ whenever $j'>\mu_m$). Then a base change of the form $E_{m,j}$ ($j<\mu_{m}$) allows to kill the tuple $\gamma_{i,j}$, without modifying any $\gamma_{i',j'}$ with $j'\geqslant j$ (set $\omega_{m'}:=\gamma_{i,j}^{m'}/\gamma_{i,\mu_m}^{m}$). Pictorially, if there is a single tuple with non-zero $m$-th entry in column $\mu_{m}$, then we can kill any tuple lying on the same row left of this one without modifying tuples on columns right to the annihilated one.  
\end{enumerate}
\end{tools}

\subsection{Reductions}\label{ssect_fin_reductions}
With the combinatorial tools of the previous subsection, the game is to reduce every possible diagram as in \ref{ex:lab_YT} to a diagram with coefficients in $\{0,1\}$. First note that, for our bases $(v_{i,j})_{i,j}$ and $(u_{i,j})_{i,j}$, we have $\gamma^m_{i,j}=0$ whenever $j>\mu_m$, since $u_m\in Ker(f^{\mu_m})$. In particular, only columns of index less or equal than $\mu_1$ may have non-zero tuples.  The usual procedure will consider columns from right to left and, in each column, we will proceed from bottom to top. We prove the following proposition.

\begin{proposition}\label{prop_reduced_cases}
With base changes of the forms \ref{tool1} and \ref{tool2}, we can reduce the setup to a case where the labeled Young diagram satisfies the following conditions 
\begin{enumerate}
\item In any box $(i,j)$, except possibly one, the tuple $\gamma_{i,j}$ is either zero or of the form $e_s$, for some $s$ such that $\mu_s\geqslant j$.\\ The only exception may arise if $\boldmu =(3,2)$ and $(i,j)=(i_0,1)$ for some unique row index $i_0$. Then $\gamma_{i_0,1}=(1,1)$. 
\item In each row $i$, except possibly one, there exists at most one non-zero tuple $\gamma_{i,j}$.\\
If exists, the exceptional row is denoted by $i_*$, and there exists only two non-zero tuples $\gamma_{i_*,j}$. One of these two tuples has to be in column $j=1$.
\item In each column $j\neq 1$, for each $s$, there is at most one $\gamma_{i,j}$ equal to $e_s$. For each $s$ there exists at most one index $i\neq i_*$ such that $\gamma_{i,1}$ equals $e_s$.
\end{enumerate}
\end{proposition}


At some point in the proof, we will meet some indices called $i_*$ and $i_0$. They should be seen as candidates to be the specific indices of the proposition. It might turn out that $\gamma_{i_0,1}=e_s$ or $\gamma_{i_*,1}=0$, in which case the specific index should be discarded. 
\begin{proof} 
Since $\boldmu$ is a partition of $l\leqslant 5$, we can distinguish three cases: 
\begin{enumerate}[a)]
\item $\boldmu=(2^a, 1^{l-2a})$ for some $a\geqslant 0$.
\item $\boldmu=(a,1^{l-a})$ for some $a\geqslant 3$,
\item $\boldmu=(3,2)$,
\end{enumerate}

{\bf Case a)} Set $j:=\mu_1 \in \{1,2\}$.\\[1ex]
\underline{First step of case a)}: We focus at first on column $j$.\\[1ex] 
We initialize $s$ to $0$ and apply the following iterative procedure for decreasing $i$ from $\max\{i | \lambda_i\geqslant j \} $ to $1$. During each loop, no $\gamma_{i',j}$ for $i'>i$ is modified. After each loop, we will have $\gamma_{i,j}\in \{0,e_s\}$ while $\gamma^{s'}_{i',j}=0$ for any $s'\leqslant s$ and $i'<i$. In particular, we will never begin a loop with $\gamma_{i,j}\in W_s\setminus\{0\}$.

Given $i$, if $\gamma_{i,j}\neq 0$, then $\gamma_{i,j}\notin W_s$. So a base change of the form $D_{j}$ sends $\gamma_{i,j}$ to $e_{s+1}$ and preserves $e_1, \dots e_s$. Apply then a base change of the form $C_{i,j}$ to bring any $\gamma^{s}_{i',j'}$ to $0$ for any $i'\leqslant i$, $j'\leqslant j$, $(i',j')\neq (i,j)$. If $j=2$, furthermore apply a base change of the form $E_{s,1}$ to provide $\gamma_{i,1}=0$. Set $i_{s+1}:=i$ for later use and $s\leftarrow s+1$.

If $\gamma_{i,j}=0$, we do nothing.

\medskip
We are thus left with a picture of the following form. Note that this gives the desired result when $j=1$, that is, when $\boldmu=(1^l)$.

\begin{equation}\label{221} \begin{array}{c c}
\scalebox{1.3}{\begin{Young}
\scalebox{0.4}{$(0,0,\underline{*})$}&$0$&$0$ \cr
$0$&\scalebox{0.4}{$(0,1,\underline{0})$}& $0$  \cr
 \scalebox{0.4}{$(0,\underline{*})$}& $0$ &$0$ \cr
 \scalebox{0.4}{$(0,\underline{*})$}&$0$ \cr
$0$&\scalebox{0.4}{$(1,\underline{0})$}\cr
\scalebox{0.8}{$(\underline{*})$}&$0$\cr
\scalebox{0.8}{$(\underline{*})$}\cr
\end{Young}} & \scalebox{1.4}{$\begin{array}{c}i_2\\ \\\\ i_1\\ \\ \\ \\ \\ \\ \\ \\ \end{array}$}
\end{array}
\end{equation}
\vspace{-20ex}

\underline{Second step of case a)}: All that remains to be done is to reduce the first column in the case $\mu_1=2$. This will be achieved using base changes on $U$ of the form $D_j$ where the matrix $A$ is in upper triangular form.

We initialize a variable subset $S\subseteq[\![1,h]\!]$ to $\emptyset$ and apply an iterative procedure for decreasing $i=g,\dots, 1$. After each loop the above shape \eqref{221} is preserved and no $\gamma_{i',1}$ is modified for $i'>i$. Moreover, we will have $\gamma^s_{i',1}=0$ for any $i'<i$ and $s\in S$. In particular, we will have $\gamma_{i,1}\notin W_s\setminus W_{s-1}$ for any $s\in S$ at the beginning of every loop.

Given $i$, such that $\gamma_{i,1} \neq0$:\vspace{-1.5 ex}
\begin{itemize}
\item We define $s\in [\![1,h]\!]$ via $\gamma_{i,1}\in W_{s}\setminus W_{s-1}$. \vspace{-1.5ex}
\item If $s>a$ (resp. if $s\leqslant a$), then a base change of the form $D_1$ (resp. $D_2$) brings $\gamma_{i,1}$ to $e_s$, fixing each $e_{s'}$ for $s\neq s'$. Meanwhile, among the previously fixed tuples, only $\gamma_{i_s,2}$ may have changed (if $s\leqslant a$), and the new tuple still lies in $W_s$, since the matrix $A$ in \eqref{base_change_D} of this particular base change is triangular. In this case, since $i_{s'}>i_s$ for $s'<s$, base changes of the form $C_{i_{s'},2}$ ($s'\leqslant s$) and $M_{i_s}$ allow to bring $\gamma_{i_s,2}$ back to $e_s$. \vspace{-1.5ex}
\item Applying $C_{i,1}$ allows then to set $\gamma^s_{i',1}=0$ for any $i'<i$. \vspace{-1.5ex}
\item Set $S\leftarrow S\cup \{s\}$.
\end{itemize}

If $\gamma_{i,1} =0$, we do nothing.

\medskip

{\bf Case b)}. Set $\mu_2:=0$ if $h=1$, that is, if $\boldmu=(\mu_1)$. We have $\mu_1>\mu_2$ and the only non-zero coefficients $\gamma^m_{i,j}$ with $\mu_2<j\leqslant \mu_1$ arise when $m=1$.\\[1ex]
\underline{First step of case b)}:  Iteratively for decreasing $j=\mu_1, \dots, \mu_2+1$ we apply the following algorithm. 
\vspace{-1.5ex}
\begin{itemize}
\item Consider the lowermost non-zero tuple in column $j$ (if it exists) and denote by $i_j$ the index of its row. \vspace{-1.5ex}
\item Apply a base change of the form $M_{i_j}$ to bring $\gamma_{i_j,j}$ to $e_1$. \vspace{-1.5ex}
\item Apply a base change of the form $C_{i_j,j}$ to kill the first entry of tuples in the quadrant northwest to $(i_j,j)$. In particular, $\gamma_{i',j}=0$ when $i'<i_j$. \vspace{-1.5ex}
\end{itemize}

Note that this gives Proposition \ref{prop_reduced_cases} if $h=1$.

From now on, let $h\geqslant 2$, that is, $\mu_2=1$. All that remains to be done is to modify the entries on the first column. 
Some difficulties arise when considering non-zero tuples $\gamma_{i_j,1}$ for some $1<j\leqslant\mu_1-1$.  Since $l\leqslant 5$, either $h=2$, so that all such non-zero tuples are colinear (multiples of $(0,1)$), or $h\geqslant 3$, so $\mu_1\leqslant 3$. In the first case, base changes of the form $B_{(i_j,j), (i_{j'},j')}$ for $j>j'$ kill all $\gamma_{i_{j'},1}$ but the lowermost non-zero one (if exists). In the second case, we can apply a base change of the form $E_{1,i_{\mu_1}}$ to kill $\gamma_{i_{\mu_1},1}$. In the latter case, the only possible remaining non-zero tuple $\gamma_{i_{j},1}$ arises for $j=2$.

%
%

In a compatible way to Proposition \ref{prop_reduced_cases}, we define $i_*:=i_j$ where $j$ is the index such that $\gamma_{i_j,1}\neq 0$, if exists. We are thus left with a picture of the following form. 
\begin{equation}\label{first step} \begin{array}{c c} 
\scalebox{1.3}{\begin{Young}
\scalebox{0.6}{$(0,\underline{*})$}& $0$& $0$& $0$& $0$&$0$\cr
$0$&$0$& $0$&$0$&\scalebox{0.6}{(1,\underline{0})} &$0$ \cr
$0$& 0& \scalebox{0.6}{(1,\underline{0})}&$0$\cr
\scalebox{0.6}{$(0,\underline{*})$}&$0$ &$0$ \cr
\scalebox{0.6}{$(0,\underline{*})$}& \scalebox{0.6}{(1,\underline{0})}\cr
\scalebox{0.8}{$(\underline{*})$}&$0$\cr
\scalebox{0.8}{$(\underline{*})$}\cr
\end{Young}}&\scalebox{1.4}{$\begin{array}{l}\\i_5\\ i_3\\\\ i_2=i_*\\ \\\\\\\\\\\\\\\\\end{array}$}
\end{array}
\end{equation}
\vspace{-25ex}
$$\!\!\!\!\!\!\!\!\!\!\!\!\!\!\!\!\!\!\!\!\!\!\!\scalebox{1.4}{$\begin{array}{c c c cc}&&& & \mu_1 \end{array}$}$$

\underline{Second step of case b)}: 

We initialize $S$ to $\emptyset$ and we apply an iterative procedure for decreasing $i=g,\dots, 1$. After each step the above shape \eqref{first step} is preserved (with $\mu_2$=1) and for each $i'>i$, the tuple $\gamma_{i',1}(\in \{0\}\cup \{e_s\, | \,s\in S\cup\{2\} \})$ is preserved (up to a possible permutation of the coordinates when $i=i_*$). When $i_*>i$, we will have $\gamma_{i_*,1}\in \{0,e_2\}$ and this is the only possible case of an index $i'>i$ such that $\gamma_{i',1}=e_2$ and $2\notin S$. 
We will also have $\gamma^s_{i',1}=0$ for any $i'<i$ and any $s\in S$. In particular, we will have $\gamma_{i,1}\notin W_s\setminus W_{s-1}$ for every $s\in S$ at the beginning of every loop. 
\medskip

Given $i$, if $\gamma_{i,1}\neq 0$, we consider the following cases: \vspace{-1.5ex}
\begin{itemize}
\item Assume that ($i>i_*$) or ($i<i_*$ and $\gamma_{i,1}\notin K\gamma_{i_*,1}\subset K e_2$) or ($i_*$ does not exists).\\ Define $s$ as the index such that $\gamma_{i,1}\in W_s\setminus W_{s-1}$. We have $s\notin S$.\vspace{-1.5ex} 
\begin{itemize}
\item If $s=1$, apply $M_i$ to bring $\gamma_{i,1}$ to $e_1$.\vspace{-1ex}
\item If $s\neq 1$, a base change of the form $D_1$ brings $\gamma_{i,1}$ to $e_s$,
thereby fixing each tuple in the picture which is equal to $e_{s'}$ for some $s'\neq s$. Note that whenever $i<i_*$ and $\gamma_{i_*,1}=e_2$, we have $\gamma^{1}_{i,1}=0$, so $s\neq 2$ and $\gamma_{i_*,1}$ is preserved. \vspace{-1.5ex}
\end{itemize}
Then apply a base change of the form $C_{i,1}$ to kill each $\gamma_{i',1}^s$ ($i'<i$).\\ Set $S\leftarrow S\cup\{s\}$.\vspace{-1ex}
\item If $i=i_*$, then $\gamma_{i,1}\in W_s\setminus W_{s-1}$ for some $s\notin S\cup\{1\}$. A base change of the form $D_1$ turns $\gamma_{i,1}$ to $e_s$, fixing each $e_{s'}$ ($s'\neq s$).\\ 
Define the map $\sigma:[\![1,h]\!]\rightarrow [\![1,h]\!]$ via $\sigma(s)=2$, $\sigma(s')=s'+1$ for $2\leqslant s'<s$, and $\sigma(s)=s$ otherwise. An additional base change of the form $D_1$ sends each $e_{s'}$ to $e_{\sigma(s')}$. Set $S\leftarrow \sigma(S)$, then.\vspace{-1 ex}
\item If $i<i_*$ and $\gamma_{i,1}\in K \gamma_{i_*,1}$, then a base change of the form $M_i$ brings $\gamma_{i,1}$ to $e_2$. Apply $C_{i,1}$ to kill each $\gamma_{i',1}^2$ ($i'<i$), then. Set $S\leftarrow S\cup\{2\}$.
\end{itemize}
If $\gamma_{i,1}=0$, we do nothing.

\medskip

{\bf Case c)}: $\boldmu=(3,2)$.\\[1ex]
\underline{First step of case c)}:  Arguing as in case a) with $\boldmu=(2,1)$, we can reduce columns $3$ and $2$. Then there exists a single index $i_3$ such that $\gamma_{i_3,3}\neq 0$, and $\gamma_{i_3,3}=(1,0)$. There also exist at most $2$ indices $i=i_2,i_*$, such that $\gamma_{i,2}\neq 0$ and we obtain $\gamma_{i_2,2}=(0,1)$ and $\gamma_{i_*,2}=(1,0)$. Moreover, $i_3,i_2,i_*$ are distinct and $i_*>i_3$.

Next, we apply $E_{1,1}$ and $E_{2,1}$ to kill $\gamma_{i_3,1}$ and $\gamma_{i_2,1}$. We also apply $C_{i_3,3}$, and $C_{i_*,2}$ (resp. $C_{i_2,2}$) to set $\gamma^1_{i,1}=0$ (resp. $\gamma^2_{i,1}$) for $i\leqslant \max(i_3,i_ *)$ (resp. $i\leqslant i_2$). The picture then looks as follows
    
%
%

\begin{equation*} \begin{array}{c c}\scalebox{1.3}{
\begin{Young}
$0$ & $0$ & $0$  & $0$  \cr
$0$ & \scalebox{0.6}{$(0,1)$}&$0$& $0$ \cr
\scalebox{0.6}{$(0,*)$}&$0$& $0$ \cr
$0$& $0$ &\scalebox{0.6}{$(1,0)$}\cr
\scalebox{0.6}{$(0,*)$}&$0$&$0$\cr
\scalebox{0.6}{$(0,*)$}& \scalebox{0.6}{$(1,0)$}\cr
\scalebox{0.8}{$(\underline{*})$}&$0$\cr
\scalebox{0.8}{$(\underline{*})$}\cr
\end{Young}
}& \scalebox{1.4}{$\begin{array}{l} i_2\\\\ i_3\\\\i_*\\\\\\\\\\\\\\\\\\\end{array}$}
\end{array}
\end{equation*}
\vspace{-22ex}

\underline{Second step of case c)}:
Consider now the lowermost non-zero tuple of the first column $\gamma_{i_0,1}$. Here, the base changes used on $U$ are only of the form 
\begin{equation} u_1\leftarrow \alpha u_1, \quad u_2\leftarrow \beta u_2.\label{mult_only}\end{equation}

\begin{itemize}
\item Assume that $\gamma_{i_0,1}\notin Ke_1\cup Ke_2$. Then $i_0\notin\{i_3,i_2,i^*\}$ and $i_0> i_*$ (otherwise $\gamma^1_{i_0,1}=0$).\\
A base change of the form \eqref{mult_only} followed by base changes of the form $M_{i_1}$, $M_{i_2}$, $M_{i_3}$ brings $\gamma_{i_0,1}$ to $(1,1)$ without modifying columns $2$ and $3$.\\
 Apply a base change of the form $C_{i_0,1}$ to kill any $\gamma^2_{i',1}$ ($i'< i_0$) and note that the corresponding entries $\gamma^1_{i',1}$ are changed, thereby. But, in particular if $i_*$ exists, the whole tuple $\gamma_{i_*,1}$ is killed by an additional base change of the form $C_{i_*,2}$.\\
The next lowermost non-zero couple on the first column $\gamma_{i,1}$ can then be brought to $(1,0)$ by a base change $M_i$ and allows to kill any $\gamma_{i',1}$ ($i'<i$) by a base change of the form $C_{i,1}$.
\item Assume that $\gamma_{i_0,1}\in Ke_s$ for some ($s\in \{1,2\}$). We apply the following procedure for decreasing $i=i_0,\dots, 1$. At the beginning of each loop $i$, we will have $\gamma_{i,1}\in Ke_s$ for some $s\in \{1,2\}$.\\ 
Given $i$, assume that $\gamma_{i,1}\neq 0$ (otherwise do nothing and go over to the next smaller $i$):
\begin{itemize}
\item If $i\neq i_*$, then a base change of the form $M_i$ allows to set $\gamma_{i,1}$ to $e_s$ for some $s\in \{1,2\}$. \\Then apply $C_{i,1}$ to get $\gamma^s_{i',1}=0$ ($i'<i$). Note that this implies $\gamma_{i',1}\in Ke_{s'}$ ($i'<i$) where $s'$ is the index, such that $\{s,s'\}=\{1,2\}$.
\item If $i=i_*$, then $\gamma_{i,1}\in Ke_2$. A base change of the form \eqref{mult_only} on $u_2$ followed by base change $M_{i_2}$ yields $\gamma_{i,1}=e_2$ without modifying entries in columns $2$ and $3$. \qedhere 
\end{itemize}
\end{itemize}
\end{proof}
\begin{corollary}
The number of $P$-orbits in $\N_{\pp}$ is finite if $\bfp=(5,k)$ for some $k$.
\end{corollary}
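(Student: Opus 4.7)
The plan is to combine Lemma~\ref{bijection} with the combinatorial machinery developed in Sections~\ref{ssect_fin_notation}--\ref{ssect_fin_reductions}. By Lemma~\ref{bijection}, the $P$-orbits in $\N_{\pp}$ with $\bfp=(5,k)$ are in bijection with the isomorphism classes of representations in $\rep^{\inj}(\Q_2,I_n)(\dfp)$ for $\dfp=(5,5+k)$, i.e.\ with $\GL(U)\times\GL(V)$-orbits of data as in~\eqref{fUV} with $\dim U=5$, $\dim V=5+k$ and $U\hookrightarrow V$, $f\in\End(V)$ nilpotent with $f(U)\subset U$. So it suffices to show that the number of such orbits is finite.

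Following Subsection~\ref{ssect_fin_notation}, I would attach to each representation its labeled Young diagram: its shape $\bolda=(\lambda_1\geqslant\dots\geqslant\lambda_g)$ is the Jordan type of $f$ on $V$ (a partition of $5+k$), and its labels are $h$-tuples $\gamma_{i,j}\in K^h$ recording, in a suitable Jordan basis of $V$, the coordinates of a Jordan basis of $f|_U$ whose Jordan type $\boldmu$ is a partition of $5$. Since $U=\langle f^t(u_m)\rangle_{t,m}$, the labeled diagram determines the representation, and two representations are isomorphic iff their labeled diagrams are $\GL(U)\times\GL(V)$-conjugate.

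Next, since $l=\dim U=5$, the partition $\boldmu$ ranges over the finite set of partitions of $5$, namely $(1^5), (2,1^3), (2^2,1), (3,1^2), (4,1), (5), (3,2)$. Each of these falls into exactly one of the three cases (a), (b), (c) handled in Proposition~\ref{prop_reduced_cases}: case (a) takes care of $(1^5), (2,1^3), (2^2,1)$; case (b) takes care of $(3,1^2), (4,1), (5)$; case (c) is $(3,2)$. Hence Proposition~\ref{prop_reduced_cases} applies uniformly, and every $\GL(U)\times\GL(V)$-orbit admits a representative whose labeled Young diagram satisfies the four normalization conditions listed there.

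It remains to observe that the number of such normalized diagrams is finite. The shape $\bolda$ runs over the (finite) set of partitions of $5+k$; the partition $\boldmu$ runs over $7$ possibilities; and, once $\bolda,\boldmu$ are fixed, every label $\gamma_{i,j}$ is either $0$, or equal to one of the finitely many basis vectors $e_s\in K^h$ with $h\leqslant 5$, or (only in case $\boldmu=(3,2)$) equal to the specific tuple $(1,1)\in K^2$ at the single box $(i_0,1)$. Since there are only $5+k$ boxes and each carries one of at most $7$ possible labels, the number of normalized labeled diagrams for fixed $(\bolda,\boldmu)$ is bounded, so the total number of orbits is finite. The substantive step is of course Proposition~\ref{prop_reduced_cases} itself; the passage from that proposition to the corollary is just a matter of observing that the normalized data lives in a finite set.
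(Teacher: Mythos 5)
Your proof is correct and takes essentially the same route the paper intends: the corollary is placed immediately after Proposition~\ref{prop_reduced_cases} precisely because it follows by combining Lemma~\ref{bijection} with that proposition, which reduces every labeled Young diagram (with $\dim U = 5$) to a normal form whose labels come from a finite set. Your enumeration of the seven partitions of $5$ into cases (a), (b), (c) and the final counting argument faithfully spell out what the paper leaves implicit.
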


\subsection{Main cases}\label{ssect:fin_further}
Relying on the results of the previous subsection, we now indicate how to deduce finiteness in the four maximal cases of diagram \ref{app:fin_case_diag}.

\begin{proof}[Proof of Lemma \ref{lem:fin_cases}]
Let $\bfp=(5,k,1)$. We consider quadruples of the form
$(U,V,f, \varphi)$ with $(U,V,f)$ giving rise to a representation of $\rep^{\inj}(Q_2,I_n)$ of dimension $(5,k+6)$ as in \eqref{fUV} and $\varphi\in V^*$, such that $U\subset \Ker(\varphi)$ and $\Ker(\varphi)$ is $f$-stable. 
The corresponding representation of $\rep^{\inj}(Q_3,I_n)$ is 
\begin{center}
\begin{tikzpicture}[descr/.style={fill=white,inner sep=2.5pt}]
\matrix (m) [matrix of math nodes, row sep=0.05em,
column sep=2em, text height=1.5ex, text depth=0.2ex]
{ U & \Ker(\varphi)& V \\ };
\draw [right hook-latex] (m-1-1) -- (m-1-2);
\draw [right hook-latex] (m-1-2) -- (m-1-3);
\path[->]
(m-1-1) edge [loop above] node{$f_{|U}$} (m-1-1)
(m-1-2) edge [loop above] node{$f_{|\mathrm{Ker}(\varphi)}$} (m-1-2)
(m-1-3) edge [loop above] node{$f$} (m-1-3)
;\end{tikzpicture}\end{center}
We consider these quadruples up to isomorphism, that is, up to an isomorphism in $\rep^{\inj}(Q_2,I_n)$ together with a scalar multiplication on $\varphi$. 
Given such $(U,V,f,\varphi)$, we first consider the triple $(V,f, \varphi)$. It follows from a dual statement to \cite[Lemma~5.3]{BE} that there exists a basis $(v_{i,j})_{i,j}$ of $V$, such that $f$ is in Jordan normal form in this basis 
and such that there exists $i_{\bullet}$ with $\lambda_{i}<\lambda_{i_{\bullet}}$ for any $i>i_{\bullet}$ and $\varphi(v_{i,j})=0$ unless $(i,j)=(i_{\bullet}, \lambda_{i_{\bullet}})$. 

We can carry out the whole reduction procedure of Subsection \ref{ssect_fin_reductions} applied to $(U,V,f)$ while parallely considering how the applied base changes modify $\varphi$. Within the used base changes on $V$ of Tools \ref{tool1}, (namely $M$, $C$ and $B$), only $M_{i}$ can modify $\varphi$ and this modification is just a scalar multiplication. In particular, $\Ker(\varphi)=\langle v_{i,j}\rangle_{(i,j) \neq(i_{\bullet}, \lambda_{i_{\bullet}})}$ and there are at most $\sharp\{\lambda_i\, |\, i\in [\![1,g]\!]\}$ different isomorphism classes $(U,V,f,\varphi)$ for each isomorphism class $(U,V,f)$. Finiteness follows for the case $\bfp=(5,k,1)$.

\medskip

We will now consider the cases $\bfp\in \{ (1,3,k) , (3,1,k) , (1,1,1,k)\}$. For these, we will reduce to a finite number of isomorphism classes by applying base changes on $U$ and on $V$, such that every base change on $V$ is one of the three tools \ref{tool1}. 

\medskip

Let us consider the case $\bfp=(1,l,k)$, $l\leqslant 3$. It amounts to classify quadruples $(u',U,V,f)$ with $(U,V,f)$ as in previous subsections, $\dim U=l+1$, $\dim V=k+l+1$ and $u'\in U\cap \Ker f$. The quadruples should be considered up to isomorphism which is given in the corresponding representation context and arises by base changes on $U$, $V$ and scalar multiplication on $u'$. Using the
notation introduced in \ref{ssect_fin_notation}, we distinguish 2 cases: either a) $\mu_1\leqslant 2$ or b) $\boldmu\in \{(4), (3),(3,1)\}$.\vspace{-1.5 ex}
\begin{itemize}
\item In case a), we carry out the reductions of Section \ref{ssect_fin_reductions} on $(U,V,f)$ in order to get a labeled Young diagram as in Proposition \ref{prop_reduced_cases}. Then, writing $u'=\sum \eta_m u_{m,1}$, we proceed to the following base change on $U$ \[u_m\leftarrow \eta_m u_m \textrm{ for each $m$, such that $\eta_m\neq 0$}.\]
Meanwhile, in the labeled Young diagram, each $e_m$ has been changed to $\eta_m e_m$. Since for each row $i$, there is at most one box $(i,j)$, such that $\gamma_{i,j}\neq0$, base changes on $V$ of the form $M_i$ allow to recover the original labeled Young diagram. We, thus, have our finiteness result: each situation can be reduced to a case where $(U,V,f)$ fulfills the conditions of Proposition \ref{prop_reduced_cases}  and $u'=\sum \epsilon_m u_m$ with $\epsilon_m\in \{0,1\}$ for all $m$.\vspace{-1.5ex}
\item In case b), we first assume that $u'\in f^{\mu_1-1}(U)$. This way, no infinite family can arise, since $\dim f^{\mu_1-1}(U)=1$ in all provided cases.\\ 
The only remaining case to consider is $\boldmu=(3,1)$ and $u'\notin f^{2}(U)$. Choosing $u_1\notin \Ker f^{2}$, $u_{1,t}:=f^{3-t}(u_1)$ and $u_2:=u'$, a basis of $U$ arises in which $f$ is in Jordan normal form. It is then possible to carry out the reductions of Section \ref{ssect_fin_reductions}, using as base changes on $U$ only those of the form \eqref{mult_only} (and accordingly $u'\leftarrow \beta u'$). Indeed, the second and third column of the labeled Young diagram can be treated as in the first step of case b) of Section \ref{ssect_fin_reductions} without the use of base change $E_{1,j}$. The first column can then be reduced as in the second step of case c) of Section \ref{ssect_fin_reductions}.
\end{itemize}


The case $(l,1,k)$ ($l\leqslant 3$) amounts to classify quadruples $(\varphi,U,V,f)$ with $(U,V,f)$ as in previous subsections, $\dim U=l+1$, $\dim V=k+l+1$ and $\varphi\in U^*$, such that $\Ker (\varphi)$ is $f$-stable. The isomorphisms are given by base changes on $U$, $V$ and scalar multiplication on $\varphi$. Since $f$ is nilpotent and $\dim_K U/\Ker\varphi =1$, we know that $f(U)\subset \Ker(\varphi)$ and, given a basis $(u_{m,t})_{m,t}$ in which $f$ is in Jordan normal form, $\varphi$ is completely determined by $(\varphi(u_m))_m$. 

We can then proceed as in the $(1,l,k)$-case. Namely, in case a) we argue in the same way, with base changes on $U$ given by $(u_m\leftarrow (\varphi(u_m))^{-1} u_m)_{\varphi(u_m)\neq 0}$; here $u_m:= u_{m,\mu_m}$. In case b), the generic cases happen with $\boldmu=(3,1)$ and $\Ker(\varphi)\neq \Ker (f_{|U})^2$. In this situation, we choose $u_1\in \Ker(\varphi)\setminus \Ker((f_{|U})^2)$  and obtain $\Ker(\varphi)=Ku_1\oplus Kf(u_1)\oplus Kf^2(u_1)$. Choose $u_2\in \Ker(f)\setminus Kf^2(u_1)$ in order to get a basis in which $f$ is in Jordan normal form. Then the same arguments as in the $(1,3,k)$-case apply.  

\medskip

The last case to consider is $\bfp:=(1,1,1,k)$. It amounts to classify quintuples $(U'',U',U,V,f)$ with $(U,V,f)$ as in previous subsections, $U''\subset U'\subset U$ all $f$-stable and $(\dim U'',\dim U',\dim U,\dim V)=(1,2,3,k+3)$. We have $3$ cases to consider:\vspace{-1ex}
\begin{itemize}
\item If $\boldmu=(3)$, by Proposition \ref{prop_reduced_cases} we can reduce to a finite number of choices for $(U,V,f)$. Then $U''=K u_{1,1}$ and $U'=U''\oplus Ku_{1,2}$ follow without a choice.\vspace{-1.5ex}
\item Assume that $\boldmu=(2,1)$. \vspace{-1.5ex}
\begin{itemize}
\item If $U'=\Ker f_{|U}$, then finiteness follows from the classification of quadruples $(U'',U,V,f)$ which has been achieved in the case $\bfp=(1,2,k)$.
\item If $U'\neq \Ker f_{|U}$, then $f(U')$ is its only $f$-stable subspace of dimension $1$. So $U''=f(U')=f(U)$ and finiteness follows from the classification of quadruples $(U',U,V,f)$ which has been achieved in the case $\bfp=(2,1,k)$.
\end{itemize}
\item Assume that $\boldmu=(1,1,1)$. Choose a basis $(u_1,u_2,u_3)$ of $U$, such that $U''=Ku_3$, $U'=Ku_3\oplus K u_2$. Without modifying these spaces, we can apply base changes of the form 
\begin{equation*}
u_1\leftarrow \omega_1^1 u_1+\omega_1^2 u_2+\omega_1^3 u_3, \quad u_2\leftarrow \omega_2^2 u_2+\omega_2^3 u_3, \quad u_3\leftarrow \omega_3^3 u_3.
\end{equation*}
as introduced in Subsection \ref{ssect_base_changes}.
The reduction of the first column of the labeled Young diagram can then be achieved as in the second step of case a) of Section \ref{ssect_fin_reductions}.
\end{itemize}
We have shown finiteness for $\bfp\in\{ (1,l,k), (l,1,k), (1,1,1,k)\mid l\leq 3, k\in \mathbf{N}\}$. Enlarging these tuples $\bfp$ by an extra $1$ on the right means that we have to add an extra data $\varphi\in V^*$ to the considered representations of $\rep^{\inj}(\Q_p,I_n)$ as explained in case $(5,k,1)$. These cases can be dealt with in the very same way as above, where we deduced the $(5,k,1)$-case from the $(5,k)$-case.
\end{proof}

\section{Generalization to an arbitrary infinite field}\label{sect:field}

Let $K$ be an arbitrary infinite field. Our aim is to explain how to recover our main results under this assumption.

First of all, the notion of representation of a quiver with relations makes sense over any base field. 
It is an easy matter to check that the bijection exhibited in Lemma \ref{bijection} between the set of $P$-orbits in $\N_{\pp}$ and the set of $\GL_{\dfp}$-orbits in $R_{\dfp}^{\inj}(\Q_p,I_n)$ remains valid over $K$. 
The other translations to quiver-representation-theoretic contexts in Subsection \ref{ssect:transl} are also valid.

The proofs of our reduction techniques in Subsection \ref{ssect:reductions} do not depend on $K$ being algebraically closed.

All results on finiteness of Levi-actions in Subsection \ref{ssect:Levi_result} remain true. 
Each infinite cases over the algebraic closure $\overline{K}$ of $K$ gives rise to an infinite family of non-conjugate (over $\overline{K}$) matrices $(x_{t})_{t\in \overline{K}}$, each with entries in $\{0,1,t\}$. 
If two matrices are non-conjugate over $\overline{K}$, then they are surely non-conjugate over $K$. So, considering the family $(x_{t})_{t\in K}$ is enough to show that the considered case is already infinite over $K$.

\subsubsection{Main Theorem}
We now explain how to recover our main theorem. 

Firstly, we can apply the constructions of Section \ref{sect:covering} to $\overline{K}$. The infinite families of representations in the covering quivers constructed in Figures \ref{fig:D4} and \ref{fig:E6} arise from infinite families of isomorphism classes of representations of the affine Dynkin quivers $\widetilde{D_4}$ and $\widetilde{E_6}$. It is a classical fact that representatives can be expressed as representations $M_t\in R_{\dfp}(\widehat Q_{p,n},\widehat I_{p,n})$ ($t\in \overline{K}$) where the transition matrices have entries in $\{0,1,t\}$. 
\begin{figure}
$\!\!\!\!\!\!\!\!\!$\begin{tabular}{c c c }
\scalebox{1}{\begin{tikzpicture}[->,>=stealth',shorten >=1pt,auto,node distance=3cm,
  thick,main node/.style={
  node distance={6ex}, minimum size=2cm,
  font=\sffamily\small\bfseries,minimum size=15mm}]
\node[main node] (1b){$K$};
\node[main node] (2b)[below of=1b]{$K^2$};
\node[main node] (3)[below of=2b]{$K^3$}; 
\node[main node] (2a)[left of=3]{$K^2$};
\node[main node] (1a)[left of=2a]{$K$};
\node[main node] (2c)[below of=3]{$K^2$};
\node[main node] (1c)[below of=2c]{$K$};

\path[<-, draw] (1a) -- node {\scalebox{0.5}{$\!\! a=\begin{pmatrix}1\\0 \end{pmatrix}$}} (2a);
\path[<-, draw] (2a)-- node{\scalebox{0.5}{$\!\! b=\begin{pmatrix}1&0\\0&1\\0&0 \end{pmatrix}$}} (3);
\path[->, draw] (2c) -- node {\scalebox{0.5}{$f=\begin{pmatrix}1&1&0\\0&1&1 \end{pmatrix}$}}(3);
\path[->, draw] (1c)-- node {\scalebox{0.5}{$e=\begin{pmatrix}t&1 \end{pmatrix}$}} (2c);
\path[->,draw] (2b)-- node {\scalebox{0.5}{$c=\begin{pmatrix}0\\1 \end{pmatrix}$}} (1b);
\path[->, draw] (3)--node {\scalebox{0.5}{$d=\begin{pmatrix}0&0\\1&0\\0&1 \end{pmatrix}$}} (2b);
\end{tikzpicture}}

& 

\begin{tikzpicture}[->,>=stealth',shorten >=1pt,auto,
main node/.style={
 node distance={6ex}, 
  font=\sffamily\small\bfseries,minimum size=15mm}]
\node[main node] (1b){$\langle e_{12}\rangle$};
\node[main node] (2b)[below of=1b]{\scalebox{0.7}{$\quad \langle e_{8},e_{11}\rangle$}};
\node[main node] (3)[below of=2b]{\scalebox{0.7}{$\qquad\; \; \;\langle e_{4},e_7,e_{10}\rangle$}}; 
\node[main node] (3p)[below of=3]{\scalebox{0.7}{$\qquad \;\; \langle e_3,e_6,e_9\rangle$}};
\node[main node] (2c)[below of=3p]{\scalebox{0.7}{$\qquad\langle e_2,e_5\rangle$}};
\node[main node] (1c)[below of=2c]{\scalebox{0.7}{$\quad\langle e_1\rangle$}};
\node[main node] (2c2)[left of=2c]{\scalebox{0.7}{$\langle e_2,e_5\rangle\qquad$}};
\node[main node] (1c2)[left of=1c]{\scalebox{0.7}{$\langle e_{1}\rangle\quad$}};
\node[main node] (1a)[left of=3]{\scalebox{0.7}{$\langle e_{4}\rangle\quad$}};
\node[main node] (2a)[left of=3p]{\scalebox{0.7}{$\langle e_{3},e_6\rangle \qquad$}};

\path[<-,draw] 
(1a) edge node {\scalebox{0.6}{$a$}} (2a)
(2a) edge node {\scalebox{0.6}{$b$}} (3p)
(2b) edge node[right] {\scalebox{0.6}{$d$}}(3)
(1b) edge node[right] {\scalebox{0.6}{$c$}} (2b)
(2c2) edge node {\scalebox{0.6}{$e$}} (1c2) 
(2c) edge node[right] {\scalebox{0.6}{$e$}} (1c) 
(2a) edge node{\scalebox{0.6}{$fb$}} (2c2)
(1a) edge node{$\scalebox{0.6}{$\;\varphi^{-1}ba$}$}(3)
(3) edge node[right] {\scalebox{0.5}{$\varphi=\begin{pmatrix}1&0&0\\0&1&0\\0&0&1\end{pmatrix}$}} (3p)
(3p) edge node[right] {\scalebox{0.6}{$f$}} (2c);
\path[-, double,draw] 
(2c2) -- (2c)
(1c2) --(1c);

\end{tikzpicture}

&
\begin{tikzpicture}[->,>=stealth',shorten >=1pt,auto
,main node/.style={
 node distance={6ex}, 
  font=\sffamily\small\bfseries,minimum size=15mm}]
\node[main node] (1b){
$x_t=\scalebox{0.7}{$\left(\begin{array}{c c c c c c|c c c c c c}
0 &t&0&0&1&0&0&0&0&0&0&0\\
0 &0&1&0&0&1&0&0&0&0&0&0\\
0 &0&0&1&0&0&0&0&0&0&0&0\\
0 &0&0&0&0&0&0&0&0&0&0&0\\
0 &0&0&0&0&1&0&0&1&0&0&0\\
0 &0&0&0&0&0&1&0&0&0&0&0\\
\hline
0 &0&0&0&0&0&0&1&0&0&0&0\\
0 &0&0&0&0&0&0&0&0&0&0&0\\
0 &0&0&0&0&0&0&0&0&1&0&0\\
0 &0&0&0&0&0&0&0&0&0&1&0\\
0 &0&0&0&0&0&0&0&0&0&0&1\\
0 &0&0&0&0&0&0&0&0&0&0&0\\
\end{array}\right)$}$};
\node[main node] (2b)[below of=1b]{};
\node[main node] (3)[below of=2b]{}; 
\end{tikzpicture}
\end{tabular}
\caption{An explicit infinite example for $\bv=(6,6)$}\label{fig:expl}
\end{figure}

Through $F_{\lambda}$, this yields infinite families $(x_{t})_{t\in \overline{K}}$ of non-conjugate (over ${\overline{K}}$) nilpotent matrices of $\mathfrak{p}\otimes \overline{K}$, always with entries in $\{0,1,t\}$. 
As before, we conclude that $(x_{t})_{t\in \overline{K}}$ is an infinite family of non-conjugate nilpotent matrices of $\mathfrak p$. 
In Figure \ref{fig:expl}, we illustrate the procedure yielding an explicit family $(x_t)_t$, in the case $\bv=(6,6)$.

Concerning the finite cases dealt with in Section \ref{sect:finite_cases}, only elementary techniques of linear algebra are used. So everything holds over an arbitrary field.

\section{Applications to Hilbert schemes and commuting varieties}\label{sect:hilb_comm}

In this section, we assume for simplicity that $K$ is algebraically closed of characteristic zero. 

A motivation to consider the classification of nilpotent $P$-orbits in $\pp$ comes up in the context of commuting varieties and nested punctual Hilbert schemes \cite{BE}.  

Consider the \textit{nilpotent commuting variety} of $\mathfrak{p}$
\[\mathcal{C}(\N_{\pp}):=\{(x,y)\in \N_{\pp}\times\N_{\pp}~|\, [x,y]=0\},\] 
an important subvariety of the \textit{commuting variety} $\mathcal{C}(\mathfrak{p}):=\{{(x,y)\in \pp\times\pp}|\; [x,y]=0\}$ of $\pp$. 
We refer to \cite{Rich, Pr}, (resp. \cite{Ke, GoRo}, resp. \cite{GoGo,BE})  for ground results on commuting varieties and nilpotent commuting varieties of semisimple algebras (resp. Borel subalgebras, resp. parabolic subalgebras). 

Clearly, $P$ acts diagonally via conjugation on both varieties.
For  $x\in \N_{\pp}$, we define $\pp^x=\{y\in \pp~|~  [x,y]=0\}$ and say that $x$ is   \textit{distinguished} if $\pp^x\cap \mathfrak{sl}_n\subset \N_{\pp}$. A $P$-orbit in $\N_{\pp}$ is said to be \textit{distinguished}, if its elements are distinguished.

\begin{proposition}\label{prop:dimC_dist}
\begin{enumerate}
\item\label{enu:fincase} If $P$ acts finitely on $\N_\pp$, then $\dim \mathcal{C}(\N_{\pp})=\dim \pp-1$. Moreover, the irreducible components of maximal dimension are in one-to-one correspondence with the distinguished orbits in $\N_{\pp}$.
\item\label{enu:infcase} If there are infinitely many distinguished $P$-orbits in $\N_{\pp}$, then $\dim \mathcal{C}(\N_{\pp})\geqslant \dim \pp$.
 \end{enumerate}
\end{proposition}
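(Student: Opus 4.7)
My plan is to analyze $\mathcal{C}(\N_{\pp})$ through the first projection $\pi_1\colon \mathcal{C}(\N_{\pp})\to \N_{\pp}$, whose fiber over $x$ is the nilpotent centralizer $\pp^x\cap\N_{\pp}$. The common engine for both parts is a dimension estimate on this fiber: since the center of $\gl_n$ lies in $\pp^x$ and $\gl_n$ decomposes as center plus $\mathfrak{sl}_n$, one gets $\dim(\pp^x\cap\mathfrak{sl}_n)=\dim\pp^x-1$; because nilpotents are traceless, $\pp^x\cap\N_{\pp}\subseteq \pp^x\cap\mathfrak{sl}_n$, and the latter is an irreducible linear subspace, so $\dim(\pp^x\cap\N_{\pp})\leqslant \dim\pp^x-1$ with equality precisely when $\pp^x\cap\mathfrak{sl}_n\subseteq \N_{\pp}$, i.e.\ when $x$ is distinguished; otherwise $\dim(\pp^x\cap\N_{\pp})\leqslant \dim\pp^x-2$. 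Combined with $\dim(P.x)=\dim\pp-\dim\pp^x$, this yields, for every orbit $\Orb=P.x$,
\begin{equation*}
\dim \pi_1^{-1}(\Orb)=\dim\pp-1 \text{ if $\Orb$ is distinguished, and }\dim \pi_1^{-1}(\Orb)\leqslant\dim\pp-2\text{ otherwise.}
\end{equation*}

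For part \ref{enu:fincase}, the finite-orbit hypothesis provides a finite decomposition $\mathcal{C}(\N_{\pp})=\bigsqcup_\Orb \pi_1^{-1}(\Orb)$, so $\dim\mathcal{C}(\N_{\pp})=\max_\Orb \dim\pi_1^{-1}(\Orb)$. At least one distinguished orbit always exists: the principal nilpotent $x_0\in\gl_n$ (a single Jordan block) lies in every upper-block $\pp$, and $\pp^{x_0}=K[x_0]$ satisfies $K[x_0]\cap\mathfrak{sl}_n\subseteq\N$. Hence $\dim\mathcal{C}(\N_{\pp})=\dim\pp-1$. For the correspondence with top-dimensional components, I will observe that for distinguished $\Orb$ the piece $\pi_1^{-1}(\Orb)\cong P\times^{P^x}(\pp^x\cap\mathfrak{sl}_n)$ is a locally trivial bundle over an irreducible base with irreducible linear fibers, so it is irreducible of dimension $\dim\pp-1$ and its closure is a top-dimensional irreducible component of $\mathcal{C}(\N_{\pp})$. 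Distinct distinguished orbits yield distinct closures, because applying $\pi_1$ recovers the respective orbit closures, which are distinct for distinct orbits (orbits being open in their closures). Conversely, any top-dimensional component $C$ must meet some $\pi_1^{-1}(\Orb_0)$ in a subset of dimension $\dim\pp-1$; this forces $\Orb_0$ to be distinguished, and a dimension count inside the irreducible $\pi_1^{-1}(\Orb_0)$ gives $C=\overline{\pi_1^{-1}(\Orb_0)}$.

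For part \ref{enu:infcase}, I pick $d$ such that infinitely many distinguished orbits have dimension $d$, and let $\mathcal{D}_d$ denote the (constructible, $P$-stable) union of these orbits. Since $\mathcal{D}_d$ is $P$-stable and contains infinitely many $d$-dimensional $P$-orbits, $\dim\overline{\mathcal{D}_d}\geqslant d+1$. Over each $x\in \mathcal{D}_d$ the fiber $\pp^x\cap\N_{\pp}=\pp^x\cap\mathfrak{sl}_n$ has constant dimension $\dim\pp-d-1$, so the constructible set $\pi_1^{-1}(\mathcal{D}_d)\subseteq \mathcal{C}(\N_{\pp})$ satisfies
\begin{equation*}
\dim\pi_1^{-1}(\mathcal{D}_d)=\dim\mathcal{D}_d+(\dim\pp-d-1)\geqslant(d+1)+(\dim\pp-d-1)=\dim\pp,
\end{equation*}
whence $\dim\mathcal{C}(\N_{\pp})\geqslant\dim\pp$.

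The main obstacle I anticipate lies in the bijection portion of part \ref{enu:fincase}: guaranteeing that every distinguished orbit contributes a distinct maximal component and that no other top-dimensional components exist hinges crucially on the uniform equality $\dim\pi_1^{-1}(\Orb)=\dim\pp-1$ for distinguished $\Orb$ together with the locally closed nature of $P$-orbits. In part \ref{enu:infcase}, the delicate point is correctly applying the fiber-dimension formula over the constructible set $\mathcal{D}_d$, which is straightforward but requires acknowledging that $\mathcal{D}_d$ need not be closed.
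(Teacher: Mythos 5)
Your argument is correct and for part \ref{enu:fincase} essentially coincides with the paper's: both use the orbitwise decomposition of $\mathcal{C}(\N_\pp)$ under the first projection, the observation that $\pp^x = K\cdot\id\oplus(\pp^x\cap\mathfrak{sl}_n)$ pins the codimension of $\pp^x\cap\N_\pp$ in $\pp^x$ at $\geq 1$ with equality precisely for distinguished $x$, the irreducibility of the piece over a distinguished orbit (a bundle with linear fibers), and the regular nilpotent as witness of existence. For part \ref{enu:infcase} you take a slightly longer route than the paper. The paper notes that the orbitwise decomposition already exhibits infinitely many pairwise disjoint constructible subsets of $\mathcal{C}(\N_\pp)$, each of dimension $\dim\pp-1$, and this alone forces $\dim\mathcal{C}(\N_\pp)\geq\dim\pp$, since a variety of dimension $\dim\pp-1$ has only finitely many $(\dim\pp-1)$-dimensional irreducible components and disjoint constructible pieces of that dimension would each have to be dense in a distinct one. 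You instead bound $\dim\overline{\mathcal{D}_d}\geq d+1$ and add the fiber dimension, which relies on the equality $\dim\pi_1^{-1}(\mathcal{D}_d)=\dim\mathcal{D}_d+(\dim\pp-d-1)$; this does hold — over $\mathcal{D}_d$ the assignment $x\mapsto\pp^x\cap\mathfrak{sl}_n=\ker(\mathrm{ad}_x|_{\pp\cap\mathfrak{sl}_n})$ has constant rank, so $\pi_1^{-1}(\mathcal{D}_d)$ is a vector subbundle over $\mathcal{D}_d$ — but that justification, together with constructibility of $\mathcal{D}_d$, is extra overhead that the paper's direct count of disjoint pieces avoids entirely.
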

\begin{proof}
We begin by proving \ref{enu:fincase} and assume that the number of $P$-orbits in $\N_\pp$ is finite. Then we can decompose $\mathcal{C}(\N_{\pp})$ into finitely many disjoint subsets as follows: 
\begin{equation}\label{disj_union}
\mathcal{C}(\N_{\pp})=\bigcup_{P.x\in \N_{\pp}} P.(x, \pp^x\cap\N_{\pp})
\end{equation}
It follows from \cite[(14)]{BE}, that $\dim P.(x, \pp^x\cap\N_{\pp})=\dim \pp-\codim_{\pp^x} (\pp^x\cap\N_{\pp})$. Since $\pp^x=K\cdot \id\oplus (\pp^x\cap \mathfrak{sl}_n)$, we see that $\codim_{\pp^x} (\pp^x\cap\N_{\pp})\geqslant 1$ with equality if and only if $x$ is distinguished. In this case, $P.(x, \pp^x\cap\N_{\pp})=P.(x, \pp^x\cap \mathfrak{sl}_n)$ is irreducible. We still have to see that distinguished elements exist. An example is given by the regular nilpotent element in Jordan normal form.

We show \ref{enu:infcase} and assume that there are infinitely many distinguished $P$-orbits in $\N_{\pp}$. It follows from \eqref{disj_union} that $\mathcal{C}(\N_{\pp})$ contains an infinite union of $(\dim \pp-1)$-dimensional disjoint constructible subvarieties. The result follows.
\end{proof}

\begin{lemma}\label{lm:carac_dist}
Let $x\in \N_{\pp}$ and $M$ be a corresponding representation in $\rep^{\inj}(\Q_p, I_n)(\dfp)$. Then $x$ is distinguished if and only if $M$ is indecomposable.
\end{lemma}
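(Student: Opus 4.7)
The plan is to identify $\End(M)$ with the centralizer $\pp^x$ and translate the two conditions (indecomposability and distinguishedness) via this identification.

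First, I would check that $\End(M) \cong \pp^x$ as $K$-algebras. Unpacking the definition of a morphism from the description of $M$ given in Lemma \ref{bijection}, an endomorphism is a tuple $(f_i\colon K^{d_i}\rightarrow K^{d_i})_i$ satisfying $f_{i+1}\epsilon_i = \epsilon_i f_i$ and $f_i N_i = N_i f_i$. Because each $\epsilon_i$ is the natural inclusion $K^{d_i}\hookrightarrow K^{d_{i+1}}$, the first relation forces $f_p$ to preserve every flag step $K^{d_i}$ (equivalently $f_p\in\pp$) and to restrict to $f_i$ there; the second relation, applied at $i=p$, is exactly $f_p x = x f_p$. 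Thus the assignment $(f_i)_i\mapsto f_p$ is an algebra isomorphism $\End(M)\xrightarrow{\sim}\pp^x$.

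Next, since $K$ is algebraically closed and $\pp^x$ is finite-dimensional, $M$ is indecomposable if and only if $\pp^x$ is a local algebra, equivalently if and only if every element of $\pp^x$ is either a unit or nilpotent. I would then use the vector-space decomposition $\pp^x = K\cdot\id\oplus(\pp^x\cap\mathfrak{sl}_n)$ to compare this condition with distinguishedness.

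For the ``if'' direction, assume $x$ is distinguished, so that every $y\in\pp^x\cap\mathfrak{sl}_n$ is nilpotent. Any $f\in\pp^x$ has the form $\lambda\id+y$ with such a $y$: if $\lambda=0$ then $f$ is nilpotent, and if $\lambda\neq 0$ then the geometric series $\lambda^{-1}\sum_{k\geqslant 0}(-y/\lambda)^k$ is a polynomial in $y$, hence lies in $\pp^x$ and provides an inverse of $f$. Thus $\pp^x$ is local and $M$ is indecomposable.

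For the converse, assume $M$ is indecomposable, so $\pp^x$ is local and every non-unit is nilpotent. Any $y\in\pp^x\cap\mathfrak{sl}_n$ has trace $0$, hence $0$ is an eigenvalue, so $y$ is not invertible in $\gl_n$; a putative inverse in $\pp^x$ would also invert $y$ in $\gl_n$, a contradiction, so $y$ is a non-unit in $\pp^x$ and therefore nilpotent. This gives $\pp^x\cap\mathfrak{sl}_n\subseteq\N_\pp$, i.e.\ $x$ is distinguished. No step looks delicate; the only thing requiring slight care is checking that the inverse constructed in the ``if'' direction actually lies in $\pp^x$, which is handled by the polynomial-in-$y$ formula.
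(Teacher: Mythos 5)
Your approach is genuinely different from the paper's, and in some ways more conceptual: you pass to the isomorphism $\End(M)\cong\pp^x$ (which is correct, and worth spelling out) and then argue via locality of the endomorphism algebra, whereas the paper never uses $\End(M)$ and instead works directly with the matrix picture, showing that a nontrivial direct-sum decomposition of $M$ corresponds to a noncentral diagonal semisimple element of $\pp^x\cap\mathfrak{sl}_n$ and conversely. Your ``if'' direction (distinguished $\Rightarrow$ indecomposable) via the truncated geometric series is sound, and cleanly avoids the explicit matrix manipulations of the paper.

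However, the ``only if'' direction has a genuine gap. You assert that an element $y\in\pp^x\cap\mathfrak{sl}_n$ has trace $0$, ``hence $0$ is an eigenvalue, so $y$ is not invertible.'' This implication is false: $\mathrm{diag}(1,-1)\in\mathfrak{sl}_2$ is traceless and invertible. Concretely, for $x=0$ with $p\geq 2$ (which is not distinguished, and whose $M$ is visibly decomposable), $\pp^0\cap\mathfrak{sl}_n=\pp\cap\mathfrak{sl}_n$ is full of invertible traceless matrices, so your argument would fail to conclude anything there. The standard repair is to use the Jordan decomposition: take $y\in\pp^x\cap\mathfrak{sl}_n$ not nilpotent and consider its semisimple part $y_s$, which is a polynomial in $y$ with zero constant term, hence again lies in $\pp^x\cap\mathfrak{sl}_n$, is nonzero, and (being semisimple and traceless, with $\mathrm{char}\,K=0$) cannot be a scalar and so has at least two distinct eigenvalues; the spectral projection onto one eigenspace is then a nontrivial idempotent in $\pp^x$, contradicting locality. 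This is exactly the content of the paper's contrapositive argument, which your write-up effectively needs but does not supply.
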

\begin{proof}
Denote by $V_1\subset\dots\subset V_k$ the partial flag of $K^n$ defining $\pp$. That is $\pp(V_i)\subset V_i$ for all $i$. Given a semisimple element $s\in\pp$, we consider the decomposition into $s$-eigenspaces $K^n=\bigoplus_j W_j$ and we have $V_i=\bigoplus_j V_i\cap W_j$ for any $i$. Conversely, from a decomposition $K^n=\bigoplus_j W_j$ such that $V_i=\bigoplus_j V_i\cap W_j$ for all $i$, we can construct a semisimple element $s\in\pp$ such that the $W_j$ are the eigenspaces of $s$.

Thus $x\in \NN_{\pp}$ is not distinguished if and only if there exists a non-trivial decomposition $K^n=\bigoplus_j W_j$ such that $V_i=\bigoplus_j V_i\cap W_j$ for all $i$ with $x\in \bigoplus_j \End(W_j)$. The last statement means that any corresponding representation of $\Q_p$ is decomposable. 
\end{proof}

As a consequence, we see that in the critical cases of Proposition \ref{prop:class_inf}, namely $\bfp\in \{(6,6), (2,2,2), (4,1,4), (1,4,6), (1,4,4,1), (1,2,1,4), (1,2,1,2,1)\}$, we always have a one-parameter family of indecomposables of $\rep^{\inj}(\widehat{Q_p}, \widehat{I_n})$ and hence of $\rep^{\inj}(\Q_p, I_n)$. So $\dim \mathcal{C}(\N_{\pp})\geqslant \dim \pp$ holds true in these cases. It is unclear whether this last property holds whenever $P$ acts on $\N_{\pp}$ with infinitely many orbits (\textit{e.g.} if $\bv=(2,3,2)$).
However, it is sometimes easy to extend the indecomposables of Figures \ref{fig:D4} and \ref{fig:E6} to indecomposables of greater dimension. For instance, Figure \ref{fig:ext66} provides an indecomposable in the case $\bv=(k,k')$ with $k,k'\geqslant 6$. We can therefore state

\begin{figure}
\begin{center}
\scalebox{0.6}{\begin{tikzpicture}[->,>=stealth',shorten >=1pt,auto,
  thick,main node/.style={
 node distance={6ex}, 
  font=\sffamily\small\bfseries,minimum size=15mm}]
\node[main node] (1b){1};
\node[main node] (2b)[below of=1b]{2};
\node[main node] (3)[below of=2b]{3}; 
\node[main node] (3p)[below of=3]{3};
\node[main node] (2c)[below of=3p]{2};
\node[main node] (1c)[below of=2c]{1};
\node[main node] (2c2)[left of=2c]{2};
\node[main node] (1c2)[left of=1c]{1};
\node[main node] (1a)[left of=3]{1};
\node[main node] (2a)[left of=3p]{2};
\node[main node] (1g1)[below of=1c]{$\vdots$};
\node[main node] (1g2)[left of=1g1]{$\vdots$};
\node[main node] (1g3)[below of=1g1]{1};
\node[main node] (1g4)[left of=1g3]{1};
\node[main node] (1b0)[above of=1b]{$\vdots$};
\node[main node] (1b1)[above of=1b0]{1};

\path[<-,draw] 
(1a) edge node {$a$} (2a)
(2a) edge node {$b$} (3p)
(2b) edge node[right] {$d$}(3)
(1b) edge node[right] {$c$} (2b)
(2c2) edge node {$e$} (1c2) 
(2c) edge node[right] {$e$} (1c) 
(2a) edge node{$fb$} (2c2)
(1a) edge node{$ba$}(3)
(3p) edge node[right] {$f$} (2c);

\path[-, double,draw]
(1b1)--(1b0)
(1b0)--(1b) 
(2c2) -- (2c)
(3) -- (3p)
(1c2) --(1c)
(1c2)--(1g2)
(1c)--(1g1)
(1g3)--(1g4)
(1g2)--(1g4)
(1g1)--(1g3);
\end{tikzpicture}}
\caption{$\dff=(k,n)$ with $k, n-k\geqslant 6$}
\label{fig:ext66}
\end{center}
\end{figure}

\begin{proposition}\label{prop:dimC_max_par}
If $P$ is a maximal parabolic, then $\dim \mathcal{C}(\N_{\pp})=\dim \pp-1$ if and only if one of the blocks of $\pp$ is of size at most $5$. Otherwise, $\dim \mathcal{C}(\N_{\pp})\geqslant \dim \pp$.
\end{proposition}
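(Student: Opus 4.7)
The plan is to combine Proposition~\ref{prop:dimC_dist} with the Main Theorem and Lemma~\ref{lm:carac_dist}. Since $P$ is maximal, $\bv_P=(b_1,b_2)$ with $b_1+b_2=n$, and the proof splits naturally on $\min(b_1,b_2)$.

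For the forward direction, suppose $\min(b_1,b_2)\leqslant 5$. By the Main Theorem (specifically the clause ``$P$ acts infinitely if $P$ has at least $2$ blocks of size at least $6$''), the action of $P$ on $\N_\pp$ is finite. Part~\ref{enu:fincase} of Proposition~\ref{prop:dimC_dist} then gives $\dim\mathcal{C}(\N_\pp)=\dim\pp-1$, as required. Note that the existence of distinguished orbits, needed to make part~\ref{enu:fincase} non-vacuous, is guaranteed by the regular nilpotent element in Jordan normal form, exactly as used in the proof of Proposition~\ref{prop:dimC_dist}.

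For the reverse direction, suppose $b_1,b_2\geqslant 6$. The strategy is to produce infinitely many distinguished $P$-orbits in $\N_\pp$ and invoke part~\ref{enu:infcase} of Proposition~\ref{prop:dimC_dist}, which then yields $\dim\mathcal{C}(\N_\pp)\geqslant\dim\pp$. By Lemma~\ref{lm:carac_dist}, distinguished orbits correspond to indecomposable representations of $\rep^{\inj}(\Q_2,I_n)(\dfp)$, so it suffices to exhibit an infinite family of pairwise non-isomorphic indecomposables therein. This is precisely the content of Figure~\ref{fig:ext66}, which extends the $\widetilde{E}_6$-family of Figure~\ref{fig:E6} (valid for $\bv=(6,6)$) to arbitrary block sizes $(b_1,b_2)$ with $b_1,b_2\geqslant 6$ by attaching rigid tails above and below; the family lives in $\rep^{\inj}(\widehat{\Q}_{2,n},\widehat{I}_{2,n})$. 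Applying the push-down functor $F_\lambda$ of Proposition~\ref{prop:F_lambda}, parts (1) and (3) of that proposition transport this into an infinite family of pairwise non-isomorphic indecomposable objects in $\rep^{\inj}(\Q_2,I_n)(\dfp)$; Lemma~\ref{bijection} then translates these to infinitely many distinguished $P$-orbits in $\N_\pp$.

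The main obstacle I anticipate is verifying that the representations depicted in Figure~\ref{fig:ext66} really form a one-parameter family of indecomposables and that different parameters give non-isomorphic objects downstairs: indecomposability in the covering follows because the rigid tail attached to the $\widetilde{E}_6$-family is dimension-$1$ at each new vertex with forced identity maps, which cannot decompose off; non-isomorphy downstairs follows from Proposition~\ref{prop:F_lambda}(1), since the $\mathbf{Z}$-translates of a fixed covering representation form a discrete orbit while the $\widetilde{E}_6$-parameter varies continuously. The two cases together yield the stated biconditional.
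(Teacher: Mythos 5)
Your proposal reconstructs essentially the argument the paper uses (the paper leaves it largely implicit, presenting Proposition 5.4 as a consequence of the discussion that precedes it): finiteness plus Proposition~\ref{prop:dimC_dist}(\ref{enu:fincase}) for the case $\min(b_1,b_2)\leqslant 5$, and the family of indecomposables from Figure~\ref{fig:ext66}, combined via Lemma~\ref{lm:carac_dist}, Proposition~\ref{prop:F_lambda}, and Proposition~\ref{prop:dimC_dist}(\ref{enu:infcase}), for the case $b_1,b_2\geqslant 6$. One small inaccuracy in the forward direction: the clause you quote (``$P$ acts infinitely if $P$ has at least $2$ blocks of size at least $6$'') is a one-sided sufficient condition for infiniteness, so it does not by itself yield finiteness when $\min(b_1,b_2)\leqslant 5$; you need the ``if'' direction of the Main Theorem, i.e.\ that $(5,k)$ and $(k,k')$ with $k\leqslant 4$ appear in diagram~\ref{app:fin_case_diag} — equivalently, Theorem~\ref{thm:fin_cases} via Lemma~\ref{lem:fin_cases} — but that is what the Main Theorem's biconditional records, so the conclusion stands.
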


%
%
In correspondence with these commuting varieties, one can study the so-called \textit{nested punctual Hilbert schemes}. These Hilbert schemes were introduced in \cite{Ch1}. We refer to \cite[Definition~3.7]{BE} for a scheme-theoretic definition. Such scheme depends on a non-decreasing sequence $\dff=(d_1,\dots, d_p)$ and we will focus on Hilbert schemes on the plane $\mathbb{A}^2$ which we denote by $\Hil{\dff}$. We recall that, set-theoretically, 
\[\Hil{\dff}=\left\{ z_1\subset z_2\subset \dots \subset z_p\, \left|\; \begin{array}{l}z_i \textrm{ is a subscheme of } \mathbb{A}^2 \textrm{ of length }d_i\\ \textrm{ supported at $(0,0)$}\end{array}\right\}\right..\]
Equivalently, we can consider $\Hil{\dff}$ as the set of sequences of inclusions $I_1\supset\dots \supset I_p$ with $I_i$ an ideal of codimension $d_i$ in $K[X,Y]$ containing $(X,Y)^{\alpha}$ for some $\alpha\in \bN$. 

Let us consider the open subvariety 
 \[ \mathcal{C}^{\mathrm{cyc}}(\N_{\pp}):=\{ (x,y)\in \mathcal{C}(\N_{\pp})| \, \exists v \in K^n \textrm{ s.t. } 
\big \langle x^iy^j.v\big \rangle_{i,j}=K^n\},\]
 of $\mathcal{C}(\N_{\pp})$, that is, the set of couples admitting a cyclic vector. We will use the following result of \cite[Proposition 3.13]{BE}. Recall that, given a parabolic $P$, we denote the block sizes of $\pp$ by $\bv_P=(b_1,\dots, b_p)$ and we define $\dff_P=(d_1,\dots d_p)$ via $d_i=\sum_{j\leqslant i} b_j$.
\begin{proposition}\label{prop:dimHvsdimC}
There is a one-to-one correspondence between the irreducible components of $\mathcal{C}^{\mathrm{cyc}}(\N_{\pp})$ of dimension $m+\dim \pp-n$ and the irreducible components of
$\Hil{\overline{\dff_P}}$ of dimension $m$, where $\overline{\dff_P}:=(n-d_{p-1}, \dots , n-d_1, n)$
\end{proposition}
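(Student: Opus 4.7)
The plan is to construct an explicit correspondence between triples $(x,y,v)$ (with $(x,y)\in\mathcal{C}(\N_\pp)$ and $v$ a cyclic vector) modulo $P$ and points of $\Hil{\overline{\dff_P}}$, and then to compute dimensions. Given such a triple, the evaluation map $\phi_v\colon K[X,Y]\to K^n$, $f\mapsto f(x,y)v$ is surjective (cyclicity) with kernel an ideal $L_p$ of colength $n$; since $x,y\in\pp$ preserve the standard flag $F_i=K^{d_i}$, each preimage $L_{p-i}:=\phi_v^{-1}(F_i)$ is an ideal of colength $n-d_i$ in $K[X,Y]$, and nilpotence of $x,y$ ensures that $L_p$ is supported at the origin. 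This yields a chain $L_1\supset L_2\supset\dots\supset L_p$ of ideals with colengths $\overline{\dff_P}=(n-d_{p-1},\dots,n-d_1,n)$, i.e.\ a point $\Psi(x,y,v)\in\Hil{\overline{\dff_P}}$.

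Next I would verify that $\Psi$ realizes $\Hil{\overline{\dff_P}}$ as a geometric quotient of $\mathcal{C}_{\mathrm{triple}}:=\{(x,y,v)\mid (x,y)\in\mathcal{C}(\N_\pp),\,v\text{ cyclic}\}$ by the diagonal action of $P$. Invariance is clear: if $g\in P$, then $g$ preserves each $F_i$, so $\Psi(gxg^{-1},gyg^{-1},gv)=\Psi(x,y,v)$. Conversely, any point $(L_1,\dots,L_p)\in\Hil{\overline{\dff_P}}$ gives a cyclic $K[X,Y]$-module $M=K[X,Y]/L_p$ of dimension $n$ equipped with the descending flag $(L_{p-i}/L_p)$ of the required dimensions; choosing a basis of $M$ adapted to this flag produces $(x,y,v)\in\mathcal{C}_{\mathrm{triple}}$ mapping to the given point, and any two such choices differ by a flag-preserving isomorphism, i.e.\ by an element of $P$. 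Moreover the $P$-action on $\mathcal{C}_{\mathrm{triple}}$ is free, since any $g\in\GL_n$ fixing a cyclic triple $(x,y,v)$ must fix $x^iy^jv$ for all $i,j$, hence fix a basis of $K^n$.

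For the dimension count, note that the projection $\mathcal{C}_{\mathrm{triple}}\to\mathcal{C}^{\mathrm{cyc}}(\N_\pp)$ has fibres that are open dense subsets of $K^n$, so $\dim\mathcal{C}_{\mathrm{triple}}=\dim\mathcal{C}^{\mathrm{cyc}}(\N_\pp)+n$ on each irreducible component, and irreducible components correspond bijectively through this projection. Since $\Psi$ is a geometric quotient by a free $P$-action (hence smooth, with irreducible fibres of dimension $\dim P=\dim\pp$), irreducible components of $\Hil{\overline{\dff_P}}$ correspond to irreducible components of $\mathcal{C}_{\mathrm{triple}}$, and the dimensions differ by $\dim\pp$. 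Combining, a component of $\mathcal{C}^{\mathrm{cyc}}(\N_\pp)$ of dimension $m+\dim\pp-n$ corresponds to a component of $\mathcal{C}_{\mathrm{triple}}$ of dimension $m+\dim\pp$, which corresponds to a component of $\Hil{\overline{\dff_P}}$ of dimension $m$, as desired.

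The main obstacle is not the set-theoretic correspondence, which is essentially formal once the construction is written down, but the scheme-theoretic verification that $\Psi$ is a (smooth) geometric quotient morphism between the appropriate schemes. One must define the Hilbert scheme in its functorial/scheme-theoretic form, show that $\Psi$ descends to a morphism from the GIT quotient $\mathcal{C}_{\mathrm{triple}}\quot P$, and then invoke freeness of the $P$-action on the open locus of cyclic triples (here one may use that the stabilizer is trivial and that $P$ is smooth, so the morphism is a principal $P$-bundle, hence smooth). This is precisely the content of \cite[Proposition~3.13]{BE}, and the correspondence of irreducible components then follows from the fact that smooth surjections with connected fibres induce bijections on irreducible components.
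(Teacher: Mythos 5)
The paper does not prove this proposition; it is quoted verbatim as \cite[Proposition~3.13]{BE}, so there is no in-paper argument to compare against. Your sketch is the expected one: the evaluation map $\phi_v\colon f\mapsto f(x,y)v$, preimages of the standard flag yielding a chain of ideals with colengths $\overline{\dff_P}$, nilpotence pinning the support to the origin, and the identification of $\Hil{\overline{\dff_P}}$ as the quotient of the cyclic-triple locus by a free diagonal $P$-action, followed by dimension bookkeeping for the two fibrations (fibre dimensions $n$ and $\dim\pp$, respectively). You correctly flag that the real content is the scheme-theoretic verification that this quotient map is a $P$-torsor onto the nested Hilbert scheme, and you defer that point back to \cite{BE} --- which is exactly what the paper itself does, so the substantive step is outsourced in both.
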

A deeper connection between related schemes is expressed in \cite[Proposition 3.2]{BE}.
Let us mention that there always exists a component of $\Hil{\dff_P}$ of dimension $n-1$, the so-called \textit{curvilinear component}. Proposition \ref{prop:dimHvsdimC} is used in \cite[Theorem~7.5]{BE} to show that $\Hil{2,n}$ and $\Hil{n-2,n}$ are equidimensional of dimension $n-1$. With the results of the present paper, we are now able to state the following:
\begin{theorem}\label{prop:dimHilb}
\begin{enumerate}
\item If there are only finitely many nilpotent orbits for the action of $P$ on its Lie algebra $\pp$, then $\dim \Hil{\dff_P}=n-1$. In particular, this is the case for $\dff=(k,n)$ with $k\leqslant 5$ or $n-k\leqslant 5$.
\item If $\dff=(k,n)$ with $k\geqslant 6$ and $n-k\geqslant 6$ then $\dim \Hil{\dff}\geqslant n$.
\end{enumerate}
\end{theorem}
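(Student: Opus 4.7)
The plan is to combine the correspondence in Proposition \ref{prop:dimHvsdimC} with the dimension bounds of Proposition \ref{prop:dimC_dist} and the symmetry Lemma \ref{lem:red_symmetry}. The combinatorial identity to keep in mind is $\overline{\dff_P}=\dff_{({}^t\!P)}$, so to control $\dim\Hil{\dff_P}$ one should apply Proposition \ref{prop:dimHvsdimC} to the \emph{transposed} parabolic ${}^t\!P$ rather than $P$ itself. In both parts the lower bound $\dim\Hil{\dff}\geq n-1$ is free from the curvilinear component (coming from the regular nilpotent, which lies in every parabolic and supplies cyclic pairs), so the whole game is about upper/lower bounds on the \emph{cyclic} commuting variety.

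For part (1), assume $P$ acts finitely on $\N_\pp$. By Lemma \ref{lem:red_symmetry} the transposed parabolic ${}^t\!P$ also acts finitely, so Proposition \ref{prop:dimC_dist}(1) gives $\dim\mathcal{C}(\N_{{}^t\pp})=\dim{}^t\pp-1=\dim\pp-1$. Since $\mathcal{C}^{\mathrm{cyc}}(\N_{{}^t\pp})$ is open in $\mathcal{C}(\N_{{}^t\pp})$, every one of its irreducible components has dimension at most $\dim\pp-1$. Applying Proposition \ref{prop:dimHvsdimC} to ${}^t\!P$, every component of $\Hil{\dff_P}$ has dimension at most $(\dim\pp-1)-\dim\pp+n=n-1$. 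The curvilinear component achieves this bound, so $\dim\Hil{\dff_P}=n-1$. The ``in particular'' clause follows from the Main Theorem, which places every two-block configuration $\bv=(k,n-k)$ with $\min(k,n-k)\leq 5$ in the finite list.

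For part (2), let $\dff=(k,n)$ with $k,n-k\geq 6$ and set $Q:={}^t\!P$, so that $\bv_Q=(n-k,k)$ has both blocks $\geq 6$ and $\overline{\dff_Q}=\dff$. Figure \ref{fig:ext66} exhibits a one-parameter family of indecomposables in $\rep^{\mathrm{inj}}(\widehat{\Q}_{2,n},\widehat{I}_{2,n})$; the push-down functor $F_\lambda$ of Proposition \ref{prop:F_lambda} yields infinitely many non-isomorphic indecomposables in $\rep^{\mathrm{inj}}(\Q_2,I_n)(\dff_Q)$, and Lemma \ref{lm:carac_dist} translates these into infinitely many distinguished $Q$-orbits in $\N_{\pp_Q}$. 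Proposition \ref{prop:dimC_dist}(2) then gives $\dim\mathcal{C}(\N_{\pp_Q})\geq\dim\pp_Q$. To transport this via Proposition \ref{prop:dimHvsdimC}, I need a top-dimensional component of $\mathcal{C}(\N_{\pp_Q})$ to meet the open subset $\mathcal{C}^{\mathrm{cyc}}(\N_{\pp_Q})$; such an intersection is automatically open of the same dimension, giving $\dim\mathcal{C}^{\mathrm{cyc}}(\N_{\pp_Q})\geq\dim\pp_Q$ and hence a component of $\Hil{\dff}$ of dimension $\geq n$.

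The principal obstacle is this last cyclicity step. The extra dimension in $\mathcal{C}(\N_{\pp_Q})$ comes from distinguished but \emph{non-regular} orbits, so cyclicity is not automatic as in the regular nilpotent case. The concrete check is to inspect the indecomposables of Figure \ref{fig:ext66}: for a generic value of the parameter $t$ one produces an explicit nilpotent $y_t\in\pp_Q^{x_t}$ such that $(x_t,y_t)$ admits a cyclic vector, using that the Jordan type of $x_t$ in $\gl_n$ and the relative size of $\pp_Q^{x_t}$ leave enough room. This is an essentially combinatorial linear-algebra verification along the same lines as the reductions of Section \ref{sect:finite_cases}, and it completes the reduction of Theorem \ref{prop:dimHilb} to the results already established.
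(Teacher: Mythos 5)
Your proof of part (1) is correct and follows exactly the paper's route: transposition sends $\mathcal{C}(\N_{\pp})$ isomorphically to $\mathcal{C}(\N_{{}^t\pp})$ (equivalently, apply Lemma~\ref{lem:red_symmetry} and Proposition~\ref{prop:dimC_dist}(1) directly to ${}^t P$), openness of the cyclic locus caps $\dim\mathcal{C}^{\mathrm{cyc}}(\N_{{}^t\pp})$ at $\dim\pp-1$, and Proposition~\ref{prop:dimHvsdimC} then bounds all components of $\Hil{\dff_P}$ by $n-1$, with the curvilinear component saturating the bound. Nothing to add there.

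For part (2) you correctly identify both the strategy and the obstruction, but you stop precisely where the work begins. Proposition~\ref{prop:dimC_dist}(2) produces an infinite family of $(\dim\pp-1)$-dimensional disjoint constructible pieces $P.(x_t,\pp^{x_t}\cap\N_{\pp})$; to transport the resulting dimension estimate through Proposition~\ref{prop:dimHvsdimC} one must show that infinitely many of these pieces meet $\mathcal{C}^{\mathrm{cyc}}(\N_{\pp})$, i.e. that for generic $t$ there is some $y_t\in\pp^{x_t}\cap\N_{\pp}$ making $(x_t,y_t)$ cyclic. Your phrase that the Jordan type of $x_t$ and the size of $\pp^{x_t}$ "leave enough room" is not a proof: cyclicity of a commuting nilpotent pair is a genuine constraint (it is exactly what distinguishes the Hilbert-scheme locus inside the commuting variety), and there is no general principle guaranteeing it for distinguished non-regular orbits. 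The paper's proof of part (2) is in essence entirely this verification: it writes down $x_t$ explicitly from Figure~\ref{fig:ext66}, puts it in Jordan form, and then constructs a concrete $y_t$ (with a case split on whether $n=k+6$ or $n\geq k+7$) for which $e_n$ is a cyclic vector, while also checking $y_t(U)\subseteq U$ so that $y_t\in\pp$. Until that computation is carried out, the argument has a genuine gap; the reduction you describe is correct but the crux of part (2) is left unproved.
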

\begin{proof} 
Using the transposition ${}^t(\cdot)$ as in Subsection \ref{ssect:reductions}, we see that there is an isomorphism $\mathcal{C}(\N_{\pp})\cong \mathcal{C}(\N_{{}^t\!\pp})$. Since $\mathcal{C}^{cyc}(\N_{{}^t\!\pp})$ is an open subvariety of $\mathcal{C}(\N_{{}^t\!\pp})$, the first statement of the theorem is a consequence of Propositions \ref{prop:dimC_dist} and \ref{prop:dimHvsdimC}.

For the second assertion, for simplicity of notation, we will focus on $\pp$ instead of ${}^t\pp$. We need to show that a component of dimension at least $\dim \pp$ provided by Proposition \ref{prop:dimC_max_par} still appears in $\mathcal{C}^{cyc}(\N_{\pp})$. This is done as follows. From Figure \ref{fig:ext66}, we build a one-parameter family of non-conjugate elements which are distinguished by Lemma \ref{lm:carac_dist}. That is, we let $x_{t}\in \N_{\pp}$ ($t\in K^{\times}$) be defined by its action on the canonical basis $(e_i)_{i\in[\![1,n]\!] }$ of $K^n$ via 
\[\begin{array}{l}e_n\rightarrow \dots\rightarrow e_{k+5}\rightarrow e_{k+4} \rightarrow e_{k+3}\rightarrow e_{k-1}\rightarrow e_{k-5}\rightarrow e_{k-6} \rightarrow \dots\rightarrow e_1\rightarrow 0\\
e_{k+2}\rightarrow e_{k+1}\rightarrow e_{k}\rightarrow e_{k-1}+e_{k-4}\\
e_{k-2}\rightarrow e_{k-3}\rightarrow e_{k-4} \rightarrow t e_{k-5}
\end{array}\]
where an arrow $e_i\rightarrow z$ means that $x_{t}(e_i):=z$. In Jordan form for general $t\in K$, this gives
\[\begin{array}{c@{}c@{} c@{}c@{}c@{}c@{}c@{}c c} e_n&\rightarrow& e_{n-1}&\rightarrow& e_{n-2}&\rightarrow& e_{n-3} &\rightarrow & \dots\\[2ex] 
\begin{array}{c}e_{k+2}-\\(1+t) e_{k+5}\end{array} &\rightarrow &\begin{array}{c}e_{k+1}-\\(1+t) e_{k+4}\end{array}&\rightarrow &\begin{array}{c}e_{k}-\\(1+t) e_{k+3}\end{array}&\rightarrow& \begin{array}{c}e_{k-4}-\\ t e_{k-1}\end{array}&\rightarrow &0\\[2.5ex]
\begin{array}{c}e_{k+1}-\\e_{k-2}-e_{k+4}\end{array}&\rightarrow &\begin{array}{c}e_{k}-\\e_{k-3}-e_{k+3}\end{array}&\rightarrow& 0
\end{array}\]
where the first line is as before (note that $n-3\geqslant k+3$). Define $y_{t}$ in this new basis via 
\[ \begin{array}{l} e_n\rightarrow e_{k+2}-(1+t) e_{k+5}\rightarrow e_{k+1}-e_{k-2}-e_{k+4}\rightarrow \alpha (e_k-(1+t)e_{k+3})\\
e_{n-1}\rightarrow  e_{k+1}-(1+t)e_{k+4}\rightarrow e_{k}-e_{k-3}-e_{k+3}\rightarrow \alpha(e_{k-4}-t e_{k-1})\\
e_{n-2}\rightarrow (e_k-(1+t)e_{k+3})\rightarrow 0\\
e_{n-3}\rightarrow (e_{k-4}-t e_{k-1})\rightarrow 0\\
e_{i}\rightarrow 0 \quad \textrm{ if $n-4 \geq i\geq k+3$ or $i=k-1$ or $k-5 \geq i\geq1$  }
\end{array}\]
with $\alpha:=t$ if $n=k+6$ and $\alpha:=0$ if $n\geqslant k+7$. It is straightforward to check that $y(U)\subset U$, where $U=\langle e_i\rangle_{i\in [\![1,k]\!]}$.
%
Then $(x_{t},y_{t})$ is a nilpotent commuting pair of $\pp$ for general $t$,  which admits $e_n$ as a cyclic vector. Hence, $\bigcup_{t} \mathcal{C}^{cyc}(\N_{\pp})\cap \left(P.(x_{t},\pp^{x_{t}}\cap\N_{\pp})\right)$ is an infinite union of ($\dim \pp-1$)-dimensional disjoint constructible subvarieties of  $\mathcal{C}^{cyc}(\N_{\pp})$. Hence $\dim \mathcal{C}^{cyc}(\N_{\pp'})\geqslant \dim \pp$ and the result follows from Proposition \ref{prop:dimHvsdimC}.
\end{proof}

\begin{remark}
In the first case of the previous theorem, the same proof together with Lemma \ref{lm:carac_dist} yields a more precise result. Namely, the irreducible components of  $\Hil{\dff_P}$ of maximal dimension are in one-to-one correspondence with distinguished orbits $P.x\in \N_{\pp}$ satisfying the following property:
\[  \exists y\in \N_{\pp}\cap \pp^x \textrm{ such that } ({}^tx,{}^ty) \textrm{ admits a cyclic vector.}\]
\end{remark}

\begin{remark}
Since any prime is good for $GL_n$, it is plausible that the results of this section remain true when $K$ is algebraically closed of any characteristic. In particular, note that \cite{BE} considers fields of any characteristic. 
In positive characteristic, one should be careful when defining a distinguished nilpotent element. One can find clues about how to proceed in positive characteristic in \cite[Section~3]{Pr}. 
\end{remark}

In the study of the irreducibility of the (non-nilpotent) commuting varieties $\mathcal{C}(\pp)$, it is crucial to estimate the modality of the action of the group on the cone of nilpotent elements, see \emph{e.g.} the recent paper \cite[Theorem 1.1]{GoGo}. The known examples of reducible commuting variety  \cite[Theorem~1.3 and Section~8]{GoGo} have at least $15$ blocks and arise from the study of the modality of the action of $P$ on $\mathfrak{n}_{\pp}$. In particular, no example can arise with such method if $P$ has less than $6$ blocks.
 
The approach of the present paper might help to find examples of parabolics $P$ with few blocks and a big modality on $\N_{\pp}$. Indeed, using ordinary quiver theory, it is possible to find families of representations with a great number of parameters in $\rep^{\inj}(\widehat Q_p, \widehat I_n)$. For instance, if $\dff=(d_{i,j})_{i,j}$, such that $R^{\inj}_{\dff}(\widehat Q_p, \widehat I_n)\neq \emptyset$ and the irreducible components of $R_{\dff}(\widehat Q_p, \widehat I_n)$ are all of dimension at least $m+\dim \GL_{\dff}$ with $m\geqslant 0$, then there exists a $m+1$-parameter family in $R_{\dff}^{\inj}(\widehat Q_p, \widehat I_n)$. This family translates to a $m+1$-parameter family of $R_{\dff'}^{\inj}( Q_p,  I_n)$ via Proposition \ref{prop:F_lambda} and, for the corresponding parabolic $\pp$, the modality of $\N_{\pp}$ is at least $m+1$.

As an example, consider the following dimension vectors in $\widehat Q_2$ and $\widehat Q_5$ 
\[ 
\scalebox{0.8}{
$\begin{array}{ c c}
0&20\\
0&40\\
0&60\\
20&80 \\
60&80\\
60&60\\
40&40\\
20&20
\end{array}$}\qquad\textrm{ and }\qquad
\scalebox{0.6}{$
\begin{array}{ c c c c c}
0  &0 & 0 & 0 & 1\\
0 & 0 &  0 & 0 & 2\\
0 & 0 & 0&  1 & 4\\
0&  0&  1&  4&  7\\
0&  1&  4&  9&  11\\
0 &3 &9 &12& 12\\
2 & 7&  10& 11 &11\\
3&  6 & 7 & 7&  7\\
3  &4 & 4&  4&  4\\
2 & 2 & 2  &2 & 2\\
1 & 1 & 1 & 1 & 1
\end{array}$}\]
Here, $(m+1,n-1)$ is equal to $(401,399)$ in the first case and $(64,61)$ in the second case. It follows from \cite{GoGo} that the corresponding parabolic subalgebras $\pp$ of block sizes $\bv_{P}=(200,400)$ or $(11,13,14,13,11)$ have reducible commuting varieties $\mathcal{C}(\pp)$. 

\appendix
\section{Auslander-Reiten quivers}
\subsection[The Auslander-Reiten quiver of A(2,2)]{The Auslander-Reiten quiver of $\widehat{\A}(2,2)_n$}\label{app:p2x2}
\setlength{\unitlength}{0.53mm}
\begin{picture}(10,30)(10,-6)
\multiput(72,-30)(0,2){25}{\line(0,1){1}}
  \put(13,-12){\scalebox{0.43}{$\begin{array}{l}
0 1\end{array}^{\!\!\!\!(0)}$}}
 \put(20,-6){$\nearrow$}\put(20,-18){$\searrow$}
    \put(26,-2){\scalebox{0.43}{$\begin{array}{l} 
1 1  
  \end{array}^{\!\!\!\!(0)}$}}\put(32,-6){$\searrow$}
  \put(26,-22){\scalebox{0.43}{$\begin{array}{l}
0 1\\
0 1  \end{array}^{\!\!\!\!(0)}$}}\put(32,-18){$\nearrow$}
  \put(36.5,-12){\scalebox{0.43}{$\begin{array}{l} 
0 1 \\ 
1 1  \end{array}^{\!\!\!\!(0)}$}}\put(44,-6){$\nearrow$}\put(44,-18){$\searrow$}\put(43,-12){$\rightarrow$}
  \put(49,0){\scalebox{0.43}{$\begin{array}{l}
0 1  \end{array}^{\!\!\!\!(1)}$}}\put(56,-6){$\searrow$}
  \put(49,-12){\scalebox{0.43}{$\begin{array}{l}
1 1 \\ 
1 1  \end{array}^{\!\!\!\!(0)}$}}\put(55,-12){$\rightarrow$}
  \put(49,-22){\scalebox{0.43}{$\begin{array}{l}
1 0  \end{array}^{\!\!\!\!(0)}$}}\put(56,-18){$\nearrow$}\put(56,6){$\nearrow$}
  \put(60.5,-12){\scalebox{0.43}{$\begin{array}{l}
1 1 \\ 
1 0  \end{array}^{\!\!\!\!(0)}$}}\put(68,-6){$\nearrow$}\put(68,-18){$\searrow$}
\put(60.5,12){\scalebox{0.43}{$\begin{array}{l}
0 1 \\ 
0 1  \end{array}^{\!\!\!\!(1)}$}}\put(68,6){$\searrow$}
  \put(73.5,0){\scalebox{0.43}{$\begin{array}{l}
0 1 \\ 
1 1 \\ 
1 0  \end{array}^{\!\!\!\!(0)}$}}\put(81.5,6){$\nearrow$}\put(81.5,-6){$\searrow$}  
  \put(73,-24){\scalebox{0.43}{$\begin{array}{l}
1 1  \end{array}^{\!\!\!\!(1)}$}}\put(81.5,-18){$\nearrow$}
\end{picture}

\begin{picture}(10,30)(-100,-45)
\multiput(23,-40)(0,2){25}{\line(0,1){1}}
 \put(8,-30){$\searrow$}
\put(8,-6){$\searrow$}
 \put(8,-18){$\nearrow$}
\put(7,-12){$\rightarrow$}
\put(13,-36){\scalebox{0.43}{$\begin{array}{l}
0 1\\ 
0 1\end{array}^{\!\!\!\!(n-1)}$}} \put(20,-30){$\nearrow$}
  \put(13,-12){\scalebox{0.43}{$\begin{array}{l}
1 1\\ 
1 0\end{array}^{\!\!\!\!(n-2)}$}}
 \put(20,-6){$\nearrow$}\put(20,-18){$\searrow$}
    \put(25,-22){\scalebox{0.43}{$\begin{array}{l}
0 1 \\ 
1 1 \\ 
1 0  \end{array}^{\!\!\!\!(n-2)}$}}\put(32,-6){$\searrow$}\put(32,-30){$\searrow$}
  \put(24,0){\scalebox{0.43}{$\begin{array}{l}
1 1  \end{array}^{\!\!\!\!(n-1)}$}}\put(32,-18){$\nearrow$}
   \put(37,-36){\scalebox{0.43}{$\begin{array}{l}
1 0 \\ 
1 0  \end{array}^{\!\!\!\!(n-2)}$}}\put(44,-30){$\nearrow$}
  \put(36.5,-12){\scalebox{0.43}{$\begin{array}{l} 
0 1 \\ 
1 1  \end{array}^{\!\!\!\!(n-1)}$}}\put(44,-6){$\nearrow$}\put(44,-18){$\searrow$}\put(43,-12){$\rightarrow$}
  \put(48,0){\scalebox{0.43}{$\begin{array}{l}
0 1  \end{array}^{\!\!\!\!(n)}$}}\put(56,-6){$\searrow$}
  \put(48,-12){\scalebox{0.43}{$\begin{array}{l}
1 1 \\ 
1 1  \end{array}^{\!\!\!\!(n-1)}$}}\put(55,-12){$\rightarrow$}
  \put(48,-24){\scalebox{0.43}{$\begin{array}{l}
1 0  \end{array}^{\!\!\!\!(n-1)}$}}\put(56,-18){$\nearrow$}
  \put(60.5,-12){\scalebox{0.43}{$\begin{array}{l}
1 1 \\ 
1 0  \end{array}^{\!\!\!\!(n-1)}$}}\put(68,-6){$\nearrow$}\put(68,-18){$\searrow$}
  \put(73,0){\scalebox{0.43}{$\begin{array}{l}
1 0\\
1 0  \end{array}^{\!\!\!\!(n-1)}$}}\put(82,-6){$\searrow$}
  \put(73.5,-24){\scalebox{0.43}{$\begin{array}{l}
1 1  
  \end{array}^{\!\!\!\!(n)}$}}\put(82,-18){$\nearrow$}
  \put(86,-12){\scalebox{0.43}{$\begin{array}{l}
1 0 
  \end{array}^{\!\!\!\!(n)}$}}
\end{picture}

\begin{picture}(10,30)(-40,-10)
\multiput(23,-40)(0,2){30}{\line(0,1){1}}
\multiput(72,-40)(0,2){30}{\line(0,1){1}}
 \put(8,6){$\nearrow$}
\put(8,-6){$\searrow$}
 \put(8,-18){$\nearrow$}
\put(7,-12){$\rightarrow$}
\put(13,12){\scalebox{0.43}{$\begin{array}{l}
0 1\\ 
0 1\end{array}^{\!\!\!\!(i+1)}$}} \put(20,6){$\searrow$}
  \put(13,-12){\scalebox{0.43}{$\begin{array}{l}
1 1\\ 
1 0\end{array}^{\!\!\!\!(i)}$}}
 \put(20,-6){$\nearrow$}\put(20,-18){$\searrow$}
    \put(25,0){\scalebox{0.43}{$\begin{array}{l}
0 1 \\ 
1 1 \\ 
1 0  \end{array}^{\!\!\!\!(i)}$}}\put(32,-6){$\searrow$}\put(32,6){$\nearrow$}
  \put(24,-24){\scalebox{0.43}{$\begin{array}{l}
1 1  \end{array}^{\!\!\!\!(i+1)}$}}\put(32,-18){$\nearrow$}
   \put(37,12){\scalebox{0.43}{$\begin{array}{l}
1 0 \\ 
1 0  \end{array}^{\!\!\!\!(i)}$}}\put(44,6){$\searrow$}
  \put(36.5,-12){\scalebox{0.43}{$\begin{array}{l} 
0 1 \\ 
1 1  \end{array}^{\!\!\!\!(i+1)}$}}\put(44,-6){$\nearrow$}\put(44,-18){$\searrow$}\put(43,-12){$\rightarrow$}
  \put(48,0){\scalebox{0.43}{$\begin{array}{l}
1 0  \end{array}^{\!\!\!\!(i+1)}$}}\put(56,-6){$\searrow$}
  \put(48,-12){\scalebox{0.43}{$\begin{array}{l}
1 1 \\ 
1 1  \end{array}^{\!\!\!\!(i+1)}$}}\put(55,-12){$\rightarrow$}
  \put(48,-24){\scalebox{0.43}{$\begin{array}{l}
0 1  \end{array}^{\!\!\!\!(i+2)}$}}\put(56,-18){$\nearrow$}\put(56,-30){$\searrow$}
  \put(60.5,-12){\scalebox{0.43}{$\begin{array}{l}
1 1 \\ 
1 0  \end{array}^{\!\!\!\!(i+1)}$}}\put(68,-6){$\nearrow$}\put(68,-18){$\searrow$}
\put(60,-36){\scalebox{0.43}{$\begin{array}{l}
0 1 \\ 
0 1  \end{array}^{\!\!\!\!(i+2)}$}}\put(68,-30){$\nearrow$}
  \put(73,0){\scalebox{0.43}{$\begin{array}{l}
1 1  \end{array}^{\!\!\!\!(i+2)}$}}\put(82,-6){$\searrow$}
  \put(73.5,-24){\scalebox{0.43}{$\begin{array}{l}
0 1 \\ 
1 1 \\ 
1 0  \end{array}^{\!\!\!\!(i+1)}$}}\put(82,-18){$\nearrow$}\put(82,-30){$\searrow$}
\end{picture}

\rule{3mm}{0mm}\vspace{1cm} \newpage
\subsection[The Auslander-Reiten quiver of A(3,2)]{The Auslander-Reiten quiver of $\widehat{\A}(3,2)_n$}\label{app:p3x2}

\setlength{\unitlength}{0.68mm}
\begin{picture}(44,20)(-55,6)
\multiput(-20,-62)(0,2){44}{\line(0,1){1}}
\multiput(80,-62)(0,2){44}{\line(0,1){1}}
\put(-33,6){$\nearrow$}
\put(-33,-6){$\searrow$}\put(-33,-18){$\nearrow$}\put(-33,-12){$\rightarrow$}
\put(-33,-30){$\searrow$}
    \put(-29,12){\scalebox{0.4}{$\begin{array}{l}
0 1 1 \\ 
0 1 1 \end{array}^{\!\!\!\!(i+1)}$}}\put(-23,6){$\searrow$}
  \put(-29,-12){\scalebox{0.4}{$\begin{array}{l}
0 0 1 \\ 
1 2 1 \\ 
1 1 0  \end{array}^{\!\!\!\!(i)}$}}\put(-23,-6){$\nearrow$}\put(-23,-18){$\searrow$}\put(-23,-12){$\rightarrow$}
\put(-29,-36){\scalebox{0.4}{$\begin{array}{l}
1 1 1 \\ 
1 0 0  \end{array}^{\!\!\!\!(i)}$}}\put(-23,-30){$\nearrow$}\put(-23,-42){$\searrow$}
  \put(-19,0){\scalebox{0.4}{$\begin{array}{l}
0 1 1 \\ 
1 2 1 \\ 
1 1 0  \end{array}^{\!\!\!\!(i)}$}}\put(-13,6){$\nearrow$}\put(-13,-6){$\searrow$}
  \put(-19.5,-12){\scalebox{0.4}{$\begin{array}{l}
0 1 0  \end{array}^{\!\!\!\!(i+1)}$}}\put(-13,-12){$\rightarrow$}
  \put(-19,-24){\scalebox{0.4}{$\begin{array}{l}
0 0 1 \\ 
1 1 1 \\ 
1 0 0  \end{array}^{\!\!\!\!(i)}$}}\put(-13,-18){$\nearrow$}\put(-13,-30){$\searrow$}
\put(-19,-48){\scalebox{0.4}{$\begin{array}{l}
1 1 1  \end{array}^{\!\!\!\!(i+1)}$}}\put(-13,-42){$\nearrow$}
  \put(-9,12){\scalebox{0.4}{$\begin{array}{l}
1 1 0 \\ 
1 1 0  \end{array}^{\!\!\!\!(i)}$}}
  \put(-9,-12){\scalebox{0.4}{$\begin{array}{l}
0 1 1 \\ 
1 2 1 \\ 
1 0 0  \end{array}^{\!\!\!\!(i)}$}}
  \put(-9,-36){\scalebox{0.4}{$\begin{array}{l}
0 0 1 \\ 
1 1 1  \end{array}^{\!\!\!\!(i+1)}$}}\put(-3,-42){$\searrow$}
 \put(-3,6){$\searrow$}
\put(-3,-6){$\nearrow$}\put(-3,-18){$\searrow$}\put(-3,-12){$\rightarrow$}
\put(-3,-30){$\nearrow$}
  \put(1,0){\scalebox{0.4}{$\begin{array}{l}
1 1 0 \\ 
1 0 0  \end{array}^{\!\!\!\!(i)}$}}\put(7,-6){$\searrow$}
  \put(1,-12){\scalebox{0.4}{$\begin{array}{l}
0 1 1 \\ 
1 1 1 \\ 
1 0 0  \end{array}^{\!\!\!\!(i)}$}}\put(7,-12){$\rightarrow$}
  \put(1,-24){\scalebox{0.4}{$\begin{array}{l}
0 1 1 \\ 
1 2 1  \end{array}^{\!\!\!\!(i+1)}$}}\put(7,-18){$\nearrow$}\put(7,-30){$\searrow$}
 \put(1,-48){\scalebox{0.4}{$\begin{array}{l}
0 0 1  \end{array}^{\!\!\!\!(i+2)}$}}\put(7,-42){$\nearrow$}\put(7,-54){$\searrow$}
  \put(11,-12){\scalebox{0.4}{$\begin{array}{l}
0 1 1 \\ 
2 2 1 \\ 
1 0 0  \end{array}^{\!\!\!\!(i)}$}}
  \put(11,-36){\scalebox{0.4}{$\begin{array}{l}
0 1 1 \\ 
0 1 0  \end{array}^{\!\!\!\!(i+1)}$}}
  \put(11,-60){\scalebox{0.4}{$\begin{array}{l}
0 0 1 \\ 
0 0 1  \end{array}^{\!\!\!\!(i+2)}$}}
 \put(17,-6){$\nearrow$}\put(17,-18){$\searrow$}\put(17,-12){$\rightarrow$}
  \put(17,-30){$\nearrow$}\put(17,-42){$\searrow$}
 \put(17,-54){$\nearrow$}
  \put(21,0){\scalebox{0.4}{$\begin{array}{l}
0 1 1 \\ 
1 1 1  \end{array}^{\!\!\!\!(i+1)}$}}\put(27,6){$\nearrow$}\put(27,-6){$\searrow$}
  \put(20.5,-12){\scalebox{0.4}{$\begin{array}{l}
1 1 0  \end{array}^{\!\!\!\!(i+1)}$}}\put(27,-12){$\rightarrow$}
  \put(21,-24){\scalebox{0.4}{$\begin{array}{l}
0 1 1 \\ 
1 1 0 \\ 
1 0 0  \end{array}^{\!\!\!\!(i)}$}}\put(27,-18){$\nearrow$}\put(27,-30){$\searrow$}
  \put(21,-48){\scalebox{0.4}{$\begin{array}{l}
0 0 1 \\ 
0 1 1 \\ 
0 1 0  \end{array}^{\!\!\!\!(i+1)}$}}\put(27,-42){$\nearrow$}\put(27,-54){$\searrow$}
   \put(31,12){\scalebox{0.4}{$\begin{array}{l}
1 1 1 \\ 
1 1 1  \end{array}^{\!\!\!\!(i+1)}$}}\put(37,6){$\searrow$}
  \put(31,-12){\scalebox{0.4}{$\begin{array}{l}
0 1 1 \\ 
1 1 0  \end{array}^{\!\!\!\!(i+1)}$}}\put(37,-6){$\nearrow$}\put(37,-18){$\searrow$}\put(37,-12){$\rightarrow$}
  \put(31,-36){\scalebox{0.4}{$\begin{array}{l}
0 0 1 \\ 
0 1 1 \\ 
1 1 0 \\ 
1 0 0  \end{array}^{\!\!\!\!(i)}$}}\put(37,-30){$\nearrow$}\put(37,-42){$\searrow$}
  \put(31,-60){\scalebox{0.4}{$\begin{array}{l}
0 1 0 \\ 
0 1 0  \end{array}^{\!\!\!\!(i+1)}$}}\put(37,-54){$\nearrow$}
  \put(41,0){\scalebox{0.4}{$\begin{array}{l}
1 1 1 \\ 
1 1 0  \end{array}^{\!\!\!\!(i+1)}$}}\put(47,-6){$\searrow$}
  \put(40.5,-12){\scalebox{0.4}{$\begin{array}{l}
0 1 1  \end{array}^{\!\!\!\!(i+2)}$}}\put(47,-12){$\rightarrow$}
  \put(41,-24){\scalebox{0.4}{$\begin{array}{l}
0 0 1 \\ 
0 1 1 \\ 
1 1 0  \end{array}^{\!\!\!\!(i+1)}$}}\put(47,-18){$\nearrow$}\put(47,-30){$\searrow$}
  \put(41,-48){\scalebox{0.4}{$\begin{array}{l}
0 1 0 \\ 
1 1 0 \\ 
1 0 0  \end{array}^{\!\!\!\!(i)}$}}\put(47,-42){$\nearrow$}\put(47,-54){$\searrow$}
  \put(51,-12){\scalebox{0.4}{$\begin{array}{l}
0 0 1 \\ 
1 2 2 \\ 
1 1 0  \end{array}^{\!\!\!\!(i+1)}$}}\put(57,-6){$\nearrow$}\put(57,-18){$\searrow$}\put(57,-12){$\rightarrow$}
\put(51,-36){\scalebox{0.4}{$\begin{array}{l}
0 1 0 \\ 
1 1 0  \end{array}^{\!\!\!\!(i+1)}$}}\put(57,-30){$\nearrow$}\put(57,-42){$\searrow$}
  \put(51,-60){\scalebox{0.4}{$\begin{array}{l}
1 0 0 \\ 
1 0 0  \end{array}^{\!\!\!\!(i)}$}}\put(57,-54){$\nearrow$}
  \put(61,0){\scalebox{0.4}{$\begin{array}{l}
0 0 1 \\ 
0 1 1  \end{array}^{\!\!\!\!(i+2)}$}}\put(67,6){$\nearrow$}\put(67,-6){$\searrow$}
  \put(61,-12){\scalebox{0.4}{$\begin{array}{l}
0 0 1 \\ 
1 1 1 \\ 
1 1 0  \end{array}^{\!\!\!\!(i+1)}$}}\put(67,-12){$\rightarrow$}
  \put(61,-24){\scalebox{0.4}{$\begin{array}{l}
1 2 1 \\ 
1 1 0  \end{array}^{\!\!\!\!(i+1)}$}}\put(67,-18){$\nearrow$}\put(67,-30){$\searrow$}
  \put(61,-48){\scalebox{0.4}{$\begin{array}{l}
1 0 0  \end{array}^{\!\!\!\!(i+1)}$}}\put(67,-42){$\nearrow$}
   \put(71,12){\scalebox{0.4}{$\begin{array}{l}
0 1 1 \\ 
0 1 1  \end{array}^{\!\!\!\!(i+2)}$}}\put(77,6){$\searrow$}
  \put(71,-12){\scalebox{0.4}{$\begin{array}{l}
0 0 1 \\ 
1 2 1 \\ 
1 1 0  \end{array}^{\!\!\!\!(i+1)}$}}\put(77,-6){$\nearrow$}\put(77,-18){$\searrow$}\put(77,-12){$\rightarrow$}
\put(71,-36){\scalebox{0.4}{$\begin{array}{l}
1 1 1 \\ 
1 0 0  \end{array}^{\!\!\!\!(i+1)}$}}\put(77,-30){$\nearrow$}\put(77,-42){$\searrow$}
  \put(81,0){\scalebox{0.4}{$\begin{array}{l}
0 1 1 \\ 
1 2 1 \\ 
1 1 0  \end{array}^{\!\!\!\!(i+1)}$}}\put(87,6){$\nearrow$}\put(87,-6){$\searrow$}
  \put(80.5,-12){\scalebox{0.4}{$\begin{array}{l}
0 1 0  \end{array}^{\!\!\!\!(i+2)}$}}\put(87,-12){$\rightarrow$}
  \put(81,-24){\scalebox{0.4}{$\begin{array}{l}
0 0 1 \\ 
1 1 1 \\ 
1 0 0  \end{array}^{\!\!\!\!(i+1)}$}}\put(87,-18){$\nearrow$}\put(87,-30){$\searrow$}
\put(81,-48){\scalebox{0.4}{$\begin{array}{l}
1 1 1  \end{array}^{\!\!\!\!(i+2)}$}}\put(87,-42){$\nearrow$}
\end{picture}\label{ark33}\rule{3mm}{0mm}
 \vspace{5cm}


\subsection[The Auslander-Reiten quiver of A(2,3)]{The Auslander-Reiten quiver of $\widehat{\A}(2,3)_n$}\label{app:p2x3}

\setlength{\unitlength}{0.58mm}
\begin{picture}(44,15)(-55,6)
\multiput(-11,-75)(0,2){46}{\line(0,1){1}}
\multiput(69,-75)(0,2){46}{\line(0,1){1}}

   \put(-23,-18){$\searrow$}\put(-23,-6){$\nearrow$}
  \put(-23,-30){$\nearrow$}
\put(-23,-54){$\nearrow$}\put(-23,-18.2){\vector(1,-4){5.5}}
  \put(-18.5,0){\scalebox{0.4}{$\begin{array}{l}
0 1 \\ 
0 1 \\
1 1 \end{array}^{\!\!\!\!(i+1)}$}}\put(-13,-6){$\searrow$}\put(-13,-18){$\nearrow$}\put(-13,-30){$\searrow$}
  \put(-18.5,-24){\scalebox{0.4}{$\begin{array}{l}
1 1 \\ 
2 1 \\
1 0 \end{array}^{\!\!\!\!(i)}$}}\put(-13,-54){$\searrow$}\put(-13,-40){\vector(1,4){5.5}}
\put(-18.5,-48){\scalebox{0.4}{$\begin{array}{l}
0 1 \\ 
1 2 \\
1 1 \\
1 0 \end{array}^{\!\!\!\!(i)}$}}
  \put(-8.5,-12){\scalebox{0.4}{$\begin{array}{l}
0 1 \\ 
1 2 \\
2 1 \\
1 0 \end{array}^{\!\!\!\!(i)}$}}
  \put(-9,-36){\scalebox{0.4}{$\begin{array}{l}
1 1 \\ 
1 1 \end{array}^{\!\!\!\!(i+1)}$}}\put(-3,-18.2){\vector(1,-4){5.5}}
  \put(-8.5,-60){\scalebox{0.4}{$\begin{array}{l}
0 1 \\ 
1 1 \\
1 1 \\
1 0 \end{array}^{\!\!\!\!(i)}$}}\put(-3,-54){$\nearrow$}

\put(-3,-6){$\nearrow$}\put(-3,-18){$\searrow$}
\put(-3,-30){$\nearrow$} 
  \put(1.5,0){\scalebox{0.4}{$\begin{array}{l}
1 1 \\ 
1 0 \\
1 0 \end{array}^{\!\!\!\!(i)}$}}\put(7,-6){$\searrow$}
  \put(1.3,-24.5){\scalebox{0.4}{$\begin{array}{l}
0 1 \\ 
1 2 \\
1 1 \end{array}^{\!\!\!\!(i+1)}$}}\put(7,-18){$\nearrow$}\put(7,-30){$\searrow$}\put(7,-40){\vector(1,4){5.5}}
 \put(1.5,-48){\scalebox{0.4}{$\begin{array}{l}
0 1 \\ 
1 1 \\
2 1 \\
1 0 \end{array}^{\!\!\!\!(i)}$}}\put(7,-54){$\searrow$}
  \put(11.5,-12){\scalebox{0.4}{$\begin{array}{l}
0 1 \\ 
2 2 \\
2 1 \\
1 0 \end{array}^{\!\!\!\!(i)}$}}
  \put(11,-36){\scalebox{0.4}{$\begin{array}{l}
0 1 \\ 
0 1 \end{array}^{\!\!\!\!(i+2)}$}}\put(17,-43){\vector(1,-4){5.5}}
  \put(11,-60){\scalebox{0.4}{$\begin{array}{l}
1 0 \end{array}^{\!\!\!\!(i+1)}$}}
 \put(17,-6){$\nearrow$}\put(17,-18){$\searrow$}\put(17,-18.2){\vector(1,-4){5.5}}
  \put(17,-30){$\nearrow$}
 \put(17,-54){$\nearrow$}
  \put(21,-0.5){\scalebox{0.4}{$\begin{array}{l}
0 1 \\ 
1 1 \\
1 1 \end{array}^{\!\!\!\!(i+1)}$}}\put(27,6){$\nearrow$}\put(27,-6){$\searrow$}
  \put(21.5,-24){\scalebox{0.4}{$\begin{array}{l}
0 1 \\ 
1 1 \\
1 0 \\
1 0 \end{array}^{\!\!\!\!(i)}$}}
  \put(21,-48){\scalebox{0.4}{$\begin{array}{l}
1 1 \\ 
1 0 \end{array}^{\!\!\!\!(i+1)}$}}\put(27,-18){$\nearrow$}\put(27,-30){$\searrow$}\put(27,-40){\vector(1,4){5.5}}\put(27,-64.5){\vector(1,4){5.5}}
  \put(21.5,-72){\scalebox{0.4}{$\begin{array}{l}
0 1 \\ 
0 1 \\
0 1 \end{array}^{\!\!\!\!(i+2)}$}}\put(27,-54){$\searrow$}
   \put(31.5,12){\scalebox{0.4}{$\begin{array}{l}
1 1 \\ 
1 1 \\
1 1 \end{array}^{\!\!\!\!(i+1)}$}}\put(37,6){$\searrow$}\put(37,-18.2){\vector(1,-4){5.5}}
  \put(31,-12.5){\scalebox{0.4}{$\begin{array}{l}
0 1 \\ 
1 1 \\
1 0 \end{array}^{\!\!\!\!(i+1)}$}}\put(37,-6){$\nearrow$}\put(37,-18){$\searrow$}
  \put(31.5,-36){\scalebox{0.4}{$\begin{array}{l}
0 1 \\ 
0 1 \\
1 1 \\
1 0 \\
1 0 \end{array}^{\!\!\!\!(i)}$}}\put(37,-30){$\nearrow$}\put(37,-43){\vector(1,-4){5.5}}
  \put(31,-60){\scalebox{0.4}{$\begin{array}{l}
1 1 \end{array}^{\!\!\!\!(i+2)}$}}\put(37,-54){$\nearrow$}
  \put(41.5,0){\scalebox{0.4}{$\begin{array}{l}
1 1 \\
1 1 \\
1 0 \end{array}^{\!\!\!\!(i+1)}$}}\put(47,-6){$\searrow$}
  \put(41.3,-23){\scalebox{0.4}{$\begin{array}{l}
0 1 \\ 
0 1 \\
1 1 \\
1 0 \end{array}^{\!\!\!\!(i+1)}$}}
\put(47,-40){\vector(1,4){5.5}}
\put(47,-64.5){\vector(1,4){5.5}}
  \put(41.5,-48){\scalebox{0.4}{$\begin{array}{l}
0 1 \\ 
1 1 \end{array}^{\!\!\!\!(i+2)}$}}\put(47,-18){$\nearrow$}\put(47,-30){$\searrow$}
  \put(41.5,-72){\scalebox{0.4}{$\begin{array}{l}
1 0 \\ 
1 0 \\
1 0 \end{array}^{\!\!\!\!(i)}$}}\put(47,-54){$\searrow$}
  \put(51.5,-11){\scalebox{0.4}{$\begin{array}{l}
0 1 \\ 
1 2 \\
2 2 \\
1 0 \end{array}^{\!\!\!\!(i+1)}$}}\put(57,-6){$\nearrow$}\put(57,-18){$\searrow$}
\put(51,-36){\scalebox{0.4}{$\begin{array}{l}
1 0 \\
1 0 \end{array}^{\!\!\!\!(i+1)}$}}\put(57,-30){$\nearrow$}
\put(57,-18.2){\vector(1,-4){5.5}}
  \put(51,-60){\scalebox{0.4}{$\begin{array}{l}
0 1 \end{array}^{\!\!\!\!(i+3)}$}}\put(57,-54){$\nearrow$}
  \put(61.5,0){\scalebox{0.4}{$\begin{array}{l}
0 1 \\ 
0 1 \\
1 1 \end{array}^{\!\!\!\!(i+2)}$}}\put(67,-6){$\searrow$}
  \put(60.9,-24.5){\scalebox{0.4}{$\begin{array}{l}
1 1 \\ 
2 1 \\
1 0 \end{array}^{\!\!\!\!(i+1)}$}}\put(67,-18){$\nearrow$}\put(67,-30){$\searrow$}\put(67,-40){\vector(1,4){5.5}}
  \put(61.5,-48){\scalebox{0.4}{$\begin{array}{l}
0 1 \\ 
1 2 \\
1 1 \\
1 0 \end{array}^{\!\!\!\!(i+1)}$}}\put(67,-54){$\searrow$}
   \put(71.5,-11){\scalebox{0.4}{$\begin{array}{l}
0 1 \\
1 2 \\ 
2 1 \\
1 0 \end{array}^{\!\!\!\!(i+1)}$}}\put(77,-18){$\searrow$}\put(77,-6){$\nearrow$}
  \put(71,-36){\scalebox{0.4}{$\begin{array}{l}
1 1 \\ 
1 1 \end{array}^{\!\!\!\!(i+2)}$}}\put(77,-30){$\nearrow$}\put(77,-18){$\searrow$}\put(77,-18.2){\vector(1,-4){5.5}}
\put(71.5,-59){\scalebox{0.4}{$\begin{array}{l}
0 1 \\ 
1 1 \\
1 1 \\
1 0 \end{array}^{\!\!\!\!(i+1)}$}}
\put(77.4,-54){$\nearrow$}

\end{picture}\label{ark23}\rule{3mm}{0mm} \vspace{6cm}
\section{Case diagrams}\label{app:case_diagrams}
\subsection{Infinite cases}\label{app:inf_case_diag}

\begin{center}
\scalebox{0.5}{
$\!\!\!\!\!\!\!\!$\begin{tikzpicture}[->,>=stealth',shorten >=1pt,auto,node distance=3cm,
  thick,main node/.style={
draw, ellipse, node distance={20ex}, minimum size=2cm,
  font=\sffamily\small\bfseries,minimum size=15mm},descr node/.style={fill=blue!20,draw},first node/.style={fill=blue!20,draw,
draw, ellipse, node distance={20ex}, minimum size=2cm,
  font=\sffamily\small\bfseries,minimum size=15mm},
conj node/.style={fill=green!20,draw,
draw, ellipse, node distance={20ex}, minimum size=2cm,
  font=\sffamily\small\bfseries,minimum size=15mm},
node/.style={}]
\tikzstyle{split} = [draw, {-angle 90 reversed},]
\tikzstyle{plus} = [draw,{-o}]
  \node[first node] (222) {(2,2,2)};
  \node[descr node] (3bl)[left of =222] {3 blocks};
  \node[descr node] (4bl)[above=4cm of 3bl] {4 blocks};
  \node[descr node] (5bl)[above=4.5cm of 4bl] {5 blocks};
  \node[descr node] (6bl)[above=1.5cm of 5bl] {$\geq$ 6 blocks};
  \node[descr node] (2bl)[below=2cm of 3bl] {$\leq$ 2 blocks};

 \node[main node] (111111)[right of=6bl] {(1,1,1,1,1,1)};
  \node[main node] (6B) [right of=111111] {\mbox{\begin{tabular}{c} at least \\6 blocks \end{tabular}}};
 \node[main node] (222B)[above right=0.1 and 3.5 of 222] {\mbox{\begin{tabular}{c}at least \\ 3 blocks\\  of size $\geqslant$2\end{tabular}}};
  \node[main node] (11211) [right of=5bl] {(1,1,2,1,1)};
  \node[main node] (11k11) [right of=11211] {\mbox{\begin{tabular}{c}(1,1,k,1,1)\\  k $\geqslant$ 2\end{tabular}}};
  \node[main node] (11112) [below right=0.5 and -0.5 of 11211] {(1,1,1,1,2)};
  \node[main node] (1111k) [right of=11112] {\mbox{\begin{tabular}{c}(1,1,1,1,k)\\  k $\geqslant$ 2\end{tabular}}};
 \node[main node] (5B)[right of=1111k] {\scalebox{0.8}{\mbox{\begin{tabular}{c}$5$ blocks \\ with 2 blocks\\  of size $\geqslant$2  \end{tabular}}}}; 

  \node[first node] (12121) [below right=0.5 and -0.5 of 11112] {(1,2,1,2,1)};
  \node[main node] (1122) [right of =4bl] {(1,1,2,2)};
  \node[main node] (11kkp) [right of=1122] {\mbox{\begin{tabular}{c}(1,1,k,k')\\  k,k' $\geqslant$ 2\end{tabular}}};
  \node[first node] (1214) [right=2cm of 11kkp] {(1,2,1,4)};
  \node[main node] (2112) [below right=0.3 and 0.3 of 1122] {(2,1,1,2)};
  \node[main node] (k11kp) [right of=2112] {\mbox{\begin{tabular}{c}(k,1,1,k')\\  k,k' $\geqslant$ 2\end{tabular}}};
  \node[main node] (1k1kp) [right of=1214] {\mbox{\begin{tabular}{c}(1,k,1,k')\\  k $\geqslant$ 2, k'$\geqslant$ 4\end{tabular}}};
  \node[first node] (146) [right=1cm  of 222] {(1,4,6)};
  \node[first node] (1441) [right=2cm of k11kp] {(1,4,4,1)};
  \node[main node] (1kkp1) [right of=1441] {\mbox{\begin{tabular}{c}(1,k,k',1)\\  k,k' $\geqslant$ 4\end{tabular}}};
  \node[main node] (1kkp) [below right=0.1 and 0.9 of 146] {\mbox{\begin{tabular}{c}(1,k,k')\\  k $\geqslant$ 4, k'$\geqslant$ 6\end{tabular}}};
  \node[first node] (414) [right=3.2cm of 146] {(4,1,4)};
  \node[main node] (k1kp) [right of=414] {\mbox{\begin{tabular}{c}(k,1,k')\\  k,k' $\geqslant$ 4\end{tabular}}};
  \node[first node] (66) [right of=2bl] {(6,6)};
  \node[main node] (66plus) [right of=66] {2 blocks $\geqslant$ 6};

\node (leg2) [right=4.5cm of 6B] {Key:};
\node[first node] (leg3) [right=2cm of leg2] {\begin{tabular}{c}initial\\ infinite case\end{tabular}};
\node(leg3p) [right=-0.2cm of leg3]{\begin{tabular}{c}see Figures \ref{fig:D4}, \ref{fig:E6}\\ and Prop \ref{prop:class_inf}\end{tabular}};
\node[main node] (leg4) [below left= 3cm and -0.5cm of leg3] {\begin{tabular}{c}deduced\\ case\end{tabular}};
\node[main node] (leg5) [below right=3cm and -0.5cm of leg3] {\begin{tabular}{c}deduced\\ case\end{tabular}};
\path[every edge/.style={split}]
(222) edge[bend left=28]   (111111)
 edge[bend left=18]  (11211)
edge[bend left=38] (11112)
edge  (1122)
edge (2112)
;
\path[split, draw] (leg3) -- node[left]{Lemma \ref{lem:red_subgroup}} (leg4);
\path[plus, draw] (leg3) -- node[right]{Lemma \ref{lem:red_induction}} (leg5);
\path[every edge/.style={plus}]
(222) edge [bend left] (222B)
(111111) edge (6B)
(11211) edge (11k11)
(11112) edge (1111k)
(12121) edge[bend right] (5B)
(11k11) edge[bend left] (5B)
(1111k) edge (5B)
(1122) edge (11kkp)

(2112) edge (k11kp)
(1214) edge (1k1kp)
(1441) edge (1kkp1)
(146) edge (1kkp)
(414) edge (k1kp)
(66) edge (66plus);

\end{tikzpicture}}\end{center}

\subsection{Finite cases}\label{app:fin_case_diag}

\begin{center}
\scalebox{0.5}{
$\!\!\!$\begin{tikzpicture}[->,>=stealth',shorten >=1pt,auto,node distance=3cm,
  thick,main node/.style={
draw, ellipse, node distance={20ex}, minimum size=2cm,
  font=\sffamily\small\bfseries,minimum size=15mm},descr node/.style={fill=blue!20,draw},first node/.style={fill=blue!20,draw,
draw, ellipse, node distance={20ex}, minimum size=2cm,
  font=\sffamily\small\bfseries,minimum size=15mm},
conj node/.style={fill=green!20,draw,
draw, ellipse, node distance={20ex}, minimum size=2cm,
  font=\sffamily\small\bfseries,minimum size=15mm},
node/.style={}]
\tikzstyle{split} = [draw, {-angle 90 reversed},]
\tikzstyle{plus} = [draw,{-o}]

\node[descr node] (5bl) {5 blocks};
\node[descr node] (4bl)[below=2cm of 5bl] {4 blocks};
\node[descr node] (3bl)[below=4.5cm of 4bl] {3 blocks};
\node[descr node] (2bl)[below=5cm of 3bl] {2 blocks};

  \node[conj node] (111k1) [right of=5bl] {\mbox{\begin{tabular}{c}(1,1,1,k,1)\\  k$\geqslant$ 1\end{tabular}}};
  
   \node[main node] (111k) [right of=4bl] {\mbox{\begin{tabular}{c}(1,1,1,k)\\  k$\geqslant$ 1\end{tabular}}};
 
 \node[conj node] (13k1) [right=1.5cm  of 111k] {\mbox{\begin{tabular}{c}(1,3,k,1)\\  k$\geqslant$ 1\end{tabular}}};
  \node[main node] (12k1) [below right=0.7 and 0.1 of 13k1] {\mbox{\begin{tabular}{c}(1,2,k,1)\\  k$\geqslant$ 1\end{tabular}}};
  
   \node[conj node] (1k13) [right=4cm of 13k1] {\mbox{\begin{tabular}{c}(1,k,1,3)\\  k$\geqslant$ 1\end{tabular}}};
\node[main node] (1k12) [below left=0.7 and 0.1 of 1k13] {{\mbox{\begin{tabular}{c}(1,k,1,2)\\  k$\geqslant$ 1\end{tabular}}}};

 \node[main node] (11k1) [below left=0.7 and -0.5 of 1k12] {\mbox{\begin{tabular}{c}(1,1,k,1)\\  k$\geqslant$ 1\end{tabular}}}; 

\node[conj node] (1k5) [right of=3bl] {\mbox{\begin{tabular}{c}(1,k,5)\\k$\geqslant$ 1 \end{tabular}}};
  \node[main node] (1k4) [below right=0.3 and 0.1 of 1k5] {\mbox{\begin{tabular}{c}(1,k,4)\\k$\geqslant$ 1 \end{tabular}}};
 \node[main node] (1k3) [below right=0.3 and 0.1 of 1k4] {\mbox{\begin{tabular}{c}(1,k,3)\\k$\geqslant$ 1 \end{tabular}}}; 
  \node[main node] (1k2) [below right=0.3 and 0.1 of 1k3] {\mbox{\begin{tabular}{c}(1,k,2)\\ k $\geqslant$ 1\end{tabular}}};
\node[main node] (1k1) [below right=0.3 and 0.1 of 1k2] {\mbox{\begin{tabular}{c}(1,k,1)\\ k $\geqslant$ 1\end{tabular}}};

\node[main node] (13k) [right=2 of 1k5] {\mbox{\begin{tabular}{c}(1,3,k)\\ k $\geqslant$ 1\end{tabular}}}; 
 \node[main node] (12k) [below right=0.7 and 0.1 of 13k] {\mbox{\begin{tabular}{c}(1,2,k)\\ k $\geqslant$ 1\end{tabular}}};
 
  \node[main node] (31k) [right=4.5cm of 13k] {\mbox{\begin{tabular}{c}(3,1,k)\\ k $\geqslant$ 1\end{tabular}}};
 \node[main node] (21k) [below left=0.7 and 0.1 of 31k] {\mbox{\begin{tabular}{c}(2,1,k)\\ k $\geqslant$ 1\end{tabular}}};  
 \node[main node] (11k) [below left=0.7 and 0.1 of 21k] {\mbox{\begin{tabular}{c}(1,1,k)\\ k $\geqslant$ 1\end{tabular}}};
 
  \node[main node] (5k) [right of=2bl] {\mbox{\begin{tabular}{c}(5,k)\\ k$\geqslant$ 1\end{tabular}}};
    \node[main node] (kkp) [below right=0.4 and 0.1 of 5k] {\mbox{\begin{tabular}{c}(k,k')\\ k$\leqslant$ 4\\ k'$\geqslant$ 1 \end{tabular}}};

\node (leg2) [right=9.5cm of 111k1] {Key:};
\node[conj node] (leg3) [right=2cm of leg2] {\begin{tabular}{c}initial\\ finite case\end{tabular}};
\node(leg3p) [right=-0.2cm of leg3]{\begin{tabular}{c}see Lemma \ref{lem:fin_cases}\end{tabular}};

\node[main node] (leg4) [below left= 3cm and -0.5cm of leg3] {\begin{tabular}{c}deduced\\ case\end{tabular}};
\node[main node] (leg5) [below right=3cm and -0.5cm of leg3] {\begin{tabular}{c}deduced\\ case\end{tabular}};
\path[every edge/.style={split}]
(12k1) edge[bend right=40] (111k1)
(1k12) edge[bend right=30] (111k1)
(12k) edge[bend right=30] (111k)
(21k) edge (111k)
(1k2) edge[bend left=35] (11k1)
(1k3) edge[bend left=40] (12k1)
(1k4) edge[bend left=10] (13k1)
(1k4) edge[bend left=38] (1k13)
;
\path[split, draw] (leg4) -- node[left]{Lemma \ref{lem:red_subgroup}} (leg3);
\path[plus, draw] (leg5) -- node[right]{Lemma \ref{lem:red_induction}} (leg3);
\path[every edge/.style={plus}]
(1k12) edge (1k13)
(11k1) edge (1k12)
(31k) edge (1k13)
(13k) edge (13k1)
(12k) edge (12k1)
(11k) edge (11k1)
(21k) edge (1k12)
(31k) edge (1k13)
(12k1) edge (13k1)
(11k1) edge (12k1)
(111k) edge (111k1)
(11k) edge (21k)
(11k) edge (12k)
(12k) edge (13k)
(21k) edge (31k)
(1k1) edge (1k2)
(1k2) edge (1k3)
(1k3) edge (1k4)
(1k4) edge (1k5)
(5k) edge (1k5)
(kkp) edge (5k);
\end{tikzpicture}}
\end{center}

\end{document}